\documentclass[submission,copyright,creativecommons]{eptcs}
\providecommand{\event}{AiML 2026} 

\usepackage{aiml26}

\usepackage{iftex}

\ifpdf
  \usepackage{underscore}         
  \usepackage[T1]{fontenc}        
\else
  \usepackage{breakurl}           
\fi

\usepackage{amsmath,amssymb}

\usepackage{tikz}
\usetikzlibrary{shapes.geometric,calc,decorations.markings,hobby}

\newcommand{\defeq}{:=}
\newcommand{\Dmd}{\Diamond}

\newcommand{\un}[1]{\underbar{#1}}
\newcommand{\E}[1]{\exists #1\,}
\newcommand{\A}[1]{\forall #1\,}

\title{On Interpretations of Normal Modal Logics}
\author{Lev V.~Dvorkin
\thanks{The author is a winner of the “Junior Leader” competition grant held by the
“BASIS” Foundation for the Development of Theoretical Physics and Mathematics.}
\institute{Lomonosov Moscow State University, Russia}
\email{lev\_135@mail.ru}
}

\newcommand{\titlerunning}{On Interpretations of Normal Modal Logics}
\newcommand{\authorrunning}{L.V.~Dvorkin}

\hypersetup{
  bookmarksnumbered,
  pdftitle    = {\titlerunning},
  pdfauthor   = {\authorrunning},
  pdfsubject  = {EPTCS},               
  pdfkeywords = {keyword1, keyword2} 
}

\begin{document}
\maketitle

\begin{abstract}
  We study interpretations of modal logics in one another where the Boolean connectives are
  interpreted identically and the modal operator diamond is interpreted by an arbitrary formula
  $\alpha(p)$. Clearly, such a formula $\alpha(p)$ defines an interpretation of a normal modal logic
  whenever $\alpha(p)$ is additive (that is, preserves disjunction) and normal (that is, preserves
  bottom) in the target logic. In the present paper, we provide a complete description of all
  additive and normal formulas in five prominent modal logics: K, GL, Grz, S4, and S5. For K, GL,
  and S5, we also describe all additive and normal formulas with parameters.
\end{abstract}

\section{Introduction}
  \label{s:intro}
  Let $\mathcal{V}=\{p_0,p_1,\dots\}$ be a countable set of \emph{propositional variables},  
  $\mathrm{Fm}_\mathcal{L}$ and $\mathrm{Fm}_{\mathcal{L}_0}$ be the sets of all formulas in
  propositional languages $\mathcal{L}$ and $\mathcal{L}_0$ with variables from $\mathcal{V}$. 
  A \emph{translation} from $\mathcal{L}$ into $\mathcal{L}_0$ is an arbitrary mapping
  $\tau : \mathrm{Fm}_\mathcal{L} \to \mathrm{Fm}_{\mathcal{L}_0}$. 
  If logics $\Lambda \subseteq \mathrm{Fm}_\mathcal{L}$ and
  $\Lambda_0 \subseteq \mathrm{Fm}_{\mathcal{L}_0}$ are such that
  $\Lambda=\tau^{-1}\Lambda_0$
  (i.e., $\Lambda \vdash \varphi \Leftrightarrow \Lambda_0 \vdash \tau\varphi$), then we say that $\tau$
  \emph{defines an (exact) interpretation of $\Lambda$ in $\Lambda_0$}. There are several well-known
  examples of such interpretations:
  \begin{itemize}
    \item Glivenko's interpretation~\cite{Gliv29} of classical logic in intuitionistic logic:
          \begin{gather*}
            {\rm Cl} \vdash \varphi \Leftrightarrow {\rm Int} \vdash \neg\neg\varphi
          \end{gather*}
          and its modal analog~\cite{Mats55}:
          $\mathrm{S5} \vdash \varphi \Leftrightarrow \mathrm{S4} \vdash \Box\Dmd\varphi$.
    \item G\"odel's interpretation~\cite{God33} of intuitionistic logic in $\mathrm{S4}$:
          \begin{gather*}
            {\rm Int} \vdash \varphi \Leftrightarrow \mathrm{S4} \vdash \tau_G \varphi,
          \end{gather*}
          where $\tau_G$ adds $\Box$ on each subformula.
    \item Interpretations of $\rm KT$ in $\rm K$~\cite{Pell84}, of $\mathrm{S4}$ in $\mathrm{K4}$~\cite{Meschi78}, 
          and of $\mathrm{Grz}$ in $\mathrm{GL}$~\cite{Gold78,KuznMur80}:
          \begin{align*}
            \mathrm{KT} \vdash \varphi &\Leftrightarrow \mathrm{K} \vdash \tau_{\rm refl}
            \varphi,\\
            \mathrm{S4} \vdash \varphi &\Leftrightarrow \mathrm{K4} \vdash \tau_{\rm refl}
            \varphi,\\
            \mathrm{Grz} \vdash \varphi &\Leftrightarrow \mathrm{GL} \vdash \tau_{\rm refl}
            \varphi,
          \end{align*}
          where $\tau_{\rm refl}$ preserves variables, commutes with all Boolean connectives, and
          maps formulas of the form $\Dmd\psi$ to $\Dmd\tau_{\rm refl}\psi\lor\tau_{\rm refl}\psi$.
    \item The interpretation of Solovay's logic in $\mathrm{GL}$~\cite{Sol76}:
          \begin{gather*}
            {\rm S} \vdash \varphi \Leftrightarrow \mathrm{GL} \vdash \bigwedge_{\Dmd\psi \in \mathrm{Sub}(\varphi)}(\Box\psi \to \psi) \to \varphi,
          \end{gather*}
          where ${\rm Sub}(\varphi)$ is the set of all subformulas of $\varphi$.
  \end{itemize}
  Let us consider interpretations of modal logics in each other given by
  translations of the following form: for each modality $\Dmd_i$ in $\mathcal{L}$, we fix some
  formula $\alpha_i(p) \in \mathrm{Fm}_{\mathcal{L}_0}$. Then
  $\tau_{\vec \alpha} : \mathrm{Fm}_\mathcal{L} \to \mathrm{Fm}_{\mathcal{L}_0}$ is the translation
  that maps $p_k$ to $p_{2k}$\footnote{We use this technical trick to reserve odd-indexed variables for parameters (see below).}, 
  commutes with Boolean connectives, and sends $\Dmd_i\psi$ to
  $\alpha_i(\tau_{\vec \alpha}\psi)$. Such translations are called \emph{modal-to-modal}
  translations in \cite{French10}. This notion is quite restrictive. For instance, translations from
  $\mathrm{S5}$ to $\mathrm{S4}$ and from $\rm S$ to $\mathrm{GL}$ mentioned above are not of this form 
  (and cannot be replaced by translations of this form, see~\cite[Theorem~5.10]{Zol00}).
  Strictly speaking, $\tau_{\rm refl}$ is also not of this form, but it can be replaced by $\tau_{\Dmd p\lor p}$
  equally well. 
  There are other interesting examples of interpretations given by modal-to-modal
  translations:
  \begin{itemize}
    \item Kracht and Wolter~\cite{KW99} showed that every monotone monomodal logic $\Lambda$ is
          interpretable in a normal bimodal logic via the translation $\tau_{\Box_1\Dmd_2p}$.
    \item Thomason's translation~\cite{Thom75} provides an interpretation of normal polymodal logics in
          monomodal ones. It is not a modal-to-modal translation but has the
          form $\tau_T \varphi \defeq \Dmd\Box\bot \to \tau_{\vec \beta}\varphi$, where
          $\tau_{\vec \beta}$ is a particular modal-to-modal translation.
    \item The logics that are interpreted in monomodal logics by the translation
          $\tau_{\Dmd p \wedge \Dmd\neg p}$ are known as \emph{non-contingency logics}~\cite{MR66,Humb95,Zol02}.
    \item Translation $\tau_{\Box\Diamond p}$ defines an interpretation of the minimal monotone logic ${\rm EM}$
          in $\rm K$~\cite[Theorem~6.1.11]{French10}. 
          Some other interpretations (including interpretations of normal logics in each other) 
          via this translation are discussed in~\cite[Section 2]{Hum06}.  
    \item The provability logic of Niebergall arithmetic w.r.t.\ Peano arithmetic, 
          investigated in~\cite{Dvo24}, is a normal logic, which is interpreted in the polymodal 
          provability logic $\rm GLP$ by $\tau_{\Dmd_1p\lor\Dmd_0(\Dmd_1\top\land p)}$.
  \end{itemize}
  In this paper, we consider only \emph{congruential modal logics}, i.e., the sets of formulas that
  contain classical tautologies and are closed under the rules of modus ponens, substitution, and 
  \emph{equivalent replacement} $\frac{\varphi\leftrightarrow\psi}{\Diamond\varphi\leftrightarrow\Diamond\psi}$.
  It is easy to see that if $\Lambda$ is a modal logic in this sense, then so is 
  the set $\tau_{\vec\alpha}^{-1}\Lambda$ for any modal-to-modal translation $\tau_{\vec\alpha}$. 
  Hence, every modal-to-modal translation defines an interpretation of some logic in a given one.
  In~\cite{Zol00}, it was shown that, for every logic $\Lambda$ such that
  $\mathrm{K} \subseteq \Lambda \subseteq \mathrm{GL}$ or
  $\mathrm{K} \subseteq \Lambda \subseteq \mathrm{Grz}$, infinitely many
  monomodal logics are interpretable in $\Lambda$ by modal-to-modal translations.
  What happens when we restrict our attention to normal logics?
  It is well known that a congruential logic is normal iff it contains the axioms 
  $\Diamond(p\lor q)\leftrightarrow\Diamond p\lor\Diamond q$ and $\Diamond\bot\leftrightarrow\bot$.
  Therefore, $\tau_\alpha$ defines an interpretation of a normal
  logic in $\Lambda$ iff $\alpha$ is \emph{additive} and
  \emph{normal} in $\Lambda$, i.e., the following equivalences hold in $\Lambda$:
  \begin{gather*}
    \alpha(p \vee q) \leftrightarrow \alpha(p) \vee \alpha(q)
    \quad\text{and}\quad
    \alpha(\bot) \leftrightarrow \bot .
  \end{gather*}
  Thus, to understand which normal logics are interpretable in $\Lambda$,
  it is helpful to describe all formulas that are additive and normal in $\Lambda$.
  Such a description for $\mathrm{K}$ was obtained in~\cite{Benth98,Benth26} using
  model-theoretic methods.
  In the present paper, we reprove this result using a different, more
  constructive approach and extend the analysis to four other central modal
  logics: $\mathrm{GL}$, $\mathrm{S5}$, $\mathrm{S4}$, and $\mathrm{Grz}$.
  In particular, we show that there are exactly five pairwise non-equivalent normal 
  additive formulas in $\mathrm{S4}$ and in $\mathrm{Grz}$.

  One can also  consider interpretations with parameters. 
  A classic example is the interpretation of minimal logic in intuitionistic logic~\cite{PraMal68} 
  via the following translation with one parameter $p_1$:
  \begin{gather*}
      \tau\bot := p_1,\quad\tau p_k := p_{2k}\text{\;\;for }k\in\omega,\\
      \tau(\varphi\mathbin{\circ}\psi):=\tau\varphi\mathbin{\circ}\tau\psi\text{\;\;for }\circ\in\{\lor,\land,\to\}.
  \end{gather*}
  Note that, under the general definition of interpretation, there is no distinction between parameters 
  and other variables. 
  Differences only emerge when we consider interpretations of a certain type --- for example, modal-to-modal ones.
  Recall that $\tau_{\vec\alpha}$ maps all variables to variables with even indices.
  Variables $p_{2k+1},k\in\omega$ will be treated as parameters. 
  Parametric modal-to-modal translations are defined as follows: for each modality $\Diamond_i$, 
  we fix a formula $\alpha_i(p, \vec r)$, where $\vec r=\langle r_1,\dots,r_n\rangle$ is a tuple of parameters. 
  Then the translation $\tau_{\vec\alpha}$ 
  maps $p_k$ to $p_{2k}$,
  commutes with Boolean connectives, and
  sends $\Dmd_i\psi$ to $\alpha_i(\tau_{\vec\alpha}\psi, \vec r)$.
  It is easy to verify that, for every logic $\Lambda$, $\tau_{\vec\alpha}^{-1}\Lambda$ is a logic, 
  whence every modal-to-modal translation with parameters defines an interpretation
  of some logic in a given logic.
  For instance, weak Grzegorczyk logic $\rm w\mathrm{Grz}$ is interpreted in $\mathrm{GL}$ via the
  parametric translation $\tau_{\Dmd p \vee (r \wedge p)}$ (see Example~\ref{e:wGrz-in-GL}). 
  Moreover, the use of parameters allows us to interpret $\rm K$, $\rm K4$, and $\rm GL$
  in $\rm KT$, $\rm S4$, and $\rm Grz$ respectively (see Example~\ref{e:K-in-KT}).
  We will provide a characterization of
  additive formulas with parameters for the logics $\mathrm{K}$, $\mathrm{GL}$, and $\mathrm{S5}$.
  The structure of additive formulas in $\mathrm{S4}$ and $\mathrm{Grz}$ is much more complicated,
  so the parametric case for these logics is postponed to a subsequent paper.

  The syntactic characterization of additive formulas can also be studied independently of interpretations, 
  simply as the problem of characterizing formulas with a certain semantic property. 
  The most well-known results of this type are the \L{}os-Tarski and Lyndon preservation
  theorems from classical model theory.
  Modal analogs of these results were considered in~\cite{dR93,dR*,Benth26,Dvo26}.
  Additive operators in Boolean algebras 
  trace back to the classical work of J\'onsson and Tarski~\cite{JT51}.
  The results of the present paper provide a syntactic characterization of the modal 
  formulas that define such operators.

  The structure of the paper is as follows. 
  In Section~\ref{s:prelims}, we recall some basic notions from modal logic and introduce the main
  objects for the present paper: additive and normal formulas and operators.
  In Section~\ref{s:gen}, we provide some basic results on additive formulas.
  In Section~\ref{s:S5}, we characterize all additive formulas with parameters in $\mathrm{S5}$. 
  This is the simplest, yet non-trivial, case.
  In Section~\ref{s:K-GL}, we do the same for $\mathrm{K}$ and $\mathrm{GL}$. 
  In Section~\ref{s:S4-Grz}, we characterize additive formulas without parameters in $\mathrm{S4}$ and $\mathrm{Grz}$.
  In Section~\ref{s:interp}, we make some remarks regarding the interpretations of normal logics, 
  partly building on the results of the previous sections.
  In Section~\ref{s:prob}, we state some open questions for future research.

\section{Preliminaries}
    \label{s:prelims}
  \subsection{Modal formulas}
    We fix a countable set $\mathcal{V}=\{p_0,p_1,\dots\}$ of \emph{propositional variables}, $p := p_0$, and $q := p_2$.
    Variables with odd indices are called \emph{parameters}.
    The letter $r$ (possibly with indices) will be used as a metavariable ranging over parameters.
    \emph{Modal formulas} are built from variables $p_k$, their
    negations $\neg p_k$, the constants $\bot$ and $\top$ using $\wedge$, $\vee$, $\Dmd$, and $\Box$.
    The negation of a formula is defined recursively by the duality laws. The Boolean connectives $\to$ and
    $\leftrightarrow$ are treated as abbreviations. 
    The set of all modal formulas is denoted by $\rm Fm$.
    For a tuple of parameters $\vec r=\langle r_j\rangle_{j<n}$, 
    let ${\rm Fm}(p,\vec r)$ denote the set of formulas that contain no variables other than $p,r_0,\dots, r_{n-1}$.
    A formula $\varphi$ is \emph{$p$-positive} if it has no occurrence of $\neg p$.
    The set of all formulas from ${\rm Fm}(p,\vec r)$ that are $p$-positive is denoted by ${\rm Fm}^+(p,\vec r)$.
    We will also use the notation ${\rm Fm}(p)$, ${\rm Fm}^+(p)$, and ${\rm Fm}(\vec r)$ with the obvious meanings.
    Note that ${\rm Fm}(\langle\rangle)\subset{\rm Fm}(\vec r) \subset {\rm Fm}^+(p,\vec r) \subset \mathrm{Fm}(p,\vec r)$,
    where ${\rm Fm}(\langle\rangle)$ is the set of all variable-free formulas.
    %
    For $\alpha=\alpha(p,\vec r)\in{\rm Fm}(p,\vec r)$, we put $\alpha_\bot:=\alpha(\bot,\vec r)\in{\rm Fm}(\vec r)$.
  \subsection{Normal modal logic and Kripke semantics}
    Normal modal logics, Kripke frames and models, and p-morphisms are defined as usual (see, e.g.,~\cite{ChZa97}). 
    We also use standard notation for some particular logics: $\mathrm{K}$, $\mathrm{GL}$, $\mathrm{S4}$, $\mathrm{S5}$,
    $\mathrm{Grz}$, etc. For a frame $\mathcal{F}$ (class of frames $\mathcal{C}$) we denote by
    $\mathrm{Log}\, \mathcal{F}$ ($\mathrm{Log}\, \mathcal{C}$) its logic (i.e., the set of all
    formulas that are valid in it). For a logic $\Lambda$, we say that formulas $\varphi$ and $\psi$
    are \emph{$\Lambda$-equivalent} if $\Lambda \vdash \varphi \leftrightarrow \psi$ and denote this
    by $\varphi \sim_\Lambda \psi$.

    A \emph{general frame} is a pair $\mathcal{G}=(\mathcal{F}, P)$, where $\mathcal{F} = (W, R)$ 
    is a Kripke frame and $P\subseteq\mathcal{P}(W)$ is closed under Boolean operations and the \emph{full 
    preimage} operator $R^{-1}:U\mapsto\{w\in W\mid\exists{u\in U}\,w\mathrel{R} u\}$. 
    The \emph{logic of $\mathcal{G}$} is the set $\mathrm{Log}\,\mathcal{G}$
    of all formulas that are true in all models $(\mathcal F, \vartheta)$, where $\vartheta(p_k)\in P$ for all $k\in\omega$.
    Clearly, $\mathrm{Log}\,\mathcal{F}\subseteq\mathrm{Log}\,\mathcal{G}$.
    For a class of general frames $\mathcal{D}$, 
    $\mathrm{Log}\,\mathcal{D}:=\bigcap_{\mathcal G\in\mathcal D}\mathrm{Log}\mathcal G$.
  \subsection{Monotone, additive, and normal operators and formulas}
    Let $\mathcal{F} = (W, R)$ be a Kripke frame.
    For a formula $\varphi\in{\rm Fm}(p, \vec r)$, $\vec r = \langle r_j\rangle_{j<n}$, consider the operator
    \begin{align*}
      \alpha_\mathcal{F} : \mathcal{P}(W)^{1+n} &\to \mathcal{P}(W)\\
      (U, \vec V) &\mapsto \vartheta(\varphi), \text{ where }\vartheta(p) \defeq U, \vartheta(\vec r)
      \defeq \vec V.
    \end{align*}
    Notice that ${\cdot}_\mathcal{F}$ is a homomorphism from the algebra of formulas with $1+n$
    variables into the algebra of $(1+n)$-place operators on $\mathcal{P}(W)$. It is easy to see
    that, for any two formulas $\varphi(p, \vec r)$ and $\psi(p, \vec r)$, 
    \[
        \mathcal{F} \vDash \varphi
        \leftrightarrow \psi \Leftrightarrow \varphi_\mathcal{F} = \psi_\mathcal{F}.
    \]

    An operator $f : \mathcal{P}(W) \to \mathcal{P}(W)$ is
    \begin{itemize}
      \item \emph{monotone} if $U \subseteq V \Rightarrow fU \subseteq fV$ for all $U,V\subseteq W$;
      \item \emph{additive} if $f(U \cup V) = fU \cup fV$ for all $U,V\subseteq W$;
      \item \emph{normal} if $f\emptyset = \emptyset$;
      \item \emph{completely additive} if $f\bigcup_{i\in I}U_i=\bigcup_{i\in I}fU_i$ for every collection of sets
      $\{U_i\}_{i\in I}\subseteq \mathcal{P}(W)$.
    \end{itemize}
    A formula $\varphi(p, \vec r)$ is
    \begin{itemize}
      \item \emph{monotone (w.r.t.\ $p$) in $\Lambda$} if
            $\Lambda \vdash \varphi(p, \vec r) \to \varphi(p \vee q, \vec r)$;
      \item \emph{additive (w.r.t.\ $p$) in $\Lambda$} if $\Lambda \vdash \varphi(p \vee q, \vec r)
            \leftrightarrow \varphi(p, \vec r) \vee \varphi(q, \vec r)$;
      \item \emph{normal (w.r.t.\ $p$) in $\Lambda$} if
            $\Lambda \vdash \varphi(\bot, \vec r) \leftrightarrow \bot$.
    \end{itemize}
    Notice that additivity implies monotonicity in both cases, 
    and all complete additive operators are additive and normal\footnote{
    We assume that the union of the empty collection of sets equals $\emptyset$.}.
    Formulas that are additive in $\Lambda$  will also be called \emph{$\Lambda$-additive}. $\Lambda$-additive
    formulas that are normal in $\Lambda$ will be called \emph{normal $\Lambda$-additive} formulas.

    For a class of frames $\mathcal{C}$, we say that a formula $\varphi(p, \vec r)$ is
    \emph{monotone (additive, normal, completely additive) in $\mathcal{C}$} if, 
    for all $\mathcal{F} = (W, R) \in \mathcal{C}$ and $\vec V \in \mathcal{P}(W)^n$, the operator
    \begin{align*}
        \varphi_\mathcal{F}(\cdot, \vec V) : \mathcal{P}(W) &\to \mathcal{P}(W)\\ 
        U &\mapsto \varphi_\mathcal{F}(U, \vec V)
    \end{align*}
    is monotone (additive, normal, completely additive).
    The following can be easily checked:
    \begin{proposition}
        \label{p:add-c}
      Let $\Lambda = \mathrm{Log}\, \mathcal{C}$. 
      Then  $\varphi(p, \vec r)$ is monotone (additive, normal) in $\Lambda$ iff 
      it is monotone (additive, normal) in $\mathcal{C}$.
    \end{proposition}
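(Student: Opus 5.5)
The plan is to reduce each of the three properties to the validity of a single formula in $\mathcal{C}$, and then to translate validity in a frame into an identity of operators via the equivalence $\mathcal{F}\vDash\varphi\leftrightarrow\psi \Leftrightarrow \varphi_\mathcal{F}=\psi_\mathcal{F}$ recorded above. Since $\Lambda=\mathrm{Log}\,\mathcal{C}$, a formula $\chi$ belongs to $\Lambda$ iff $\mathcal{F}\vDash\chi$ for every $\mathcal{F}\in\mathcal{C}$. I will apply this to the three defining formulas:
\begin{gather*}
\varphi(p,\vec r)\to\varphi(p\vee q,\vec r),\qquad \varphi(p\vee q,\vec r)\leftrightarrow\varphi(p,\vec r)\vee\varphi(q,\vec r),\qquad \varphi(\bot,\vec r)\leftrightarrow\bot.
\end{gather*}
The parameters $\vec r$ are odd-indexed, and $p=p_0$, $q=p_2$ are distinct from them. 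Hence a valuation on $\mathcal{F}=(W,R)$ amounts to an arbitrary choice of $U=\vartheta(p)$, $U'=\vartheta(q)$ and $\vec V=\vartheta(\vec r)$, together with irrelevant values of the other variables.

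Next I compute each formula's truth set in terms of the operator $\varphi_\mathcal{F}$, using that $\cdot_\mathcal{F}$ is a homomorphism, so substitution commutes with evaluation. Under such a valuation:
\begin{gather*}
\vartheta(\varphi(p\vee q,\vec r))=\varphi_\mathcal{F}(U\cup U',\vec V),\qquad \vartheta(\varphi(p,\vec r))\cup\vartheta(\varphi(q,\vec r))=\varphi_\mathcal{F}(U,\vec V)\cup\varphi_\mathcal{F}(U',\vec V),\qquad \vartheta(\varphi(\bot,\vec r))=\varphi_\mathcal{F}(\emptyset,\vec V).
\end{gather*}
From these the three properties follow.

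\emph{Additivity.} The equivalence is true everywhere under every valuation iff $\varphi_\mathcal{F}(U\cup U',\vec V)=\varphi_\mathcal{F}(U,\vec V)\cup\varphi_\mathcal{F}(U',\vec V)$ for all $U,U',\vec V$. This says exactly that each operator $\varphi_\mathcal{F}(\cdot,\vec V)$ is additive.

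\emph{Normality.} Validity holds iff $\varphi_\mathcal{F}(\emptyset,\vec V)=\emptyset$ for all $\vec V$, which is normality of each $\varphi_\mathcal{F}(\cdot,\vec V)$.

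\emph{Monotonicity.} Validity of the implication means $\varphi_\mathcal{F}(U,\vec V)\subseteq\varphi_\mathcal{F}(U\cup U',\vec V)$ for all $U,U'$. This is equivalent to monotonicity of $\varphi_\mathcal{F}(\cdot,\vec V)$: any $U\subseteq V$ can be written as $V=U\cup U'$ by taking $U'=V$, and conversely $U\subseteq U\cup U'$.

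Quantifying over all $\mathcal{F}\in\mathcal{C}$ then gives the statement. No step is a real obstacle. The only points needing care are the bookkeeping ones: $p$, $q$ and the parameters must be distinct variables so that they can be valuated independently, and the substitution identity $\vartheta(\varphi(\psi,\vec r))=\varphi_\mathcal{F}(\vartheta(\psi),\vec V)$ must hold. The latter follows by a routine induction on $\varphi$, or directly from the homomorphism remark.
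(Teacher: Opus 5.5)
Your proof is correct and is exactly the routine verification the paper has in mind when it says the proposition ``can be easily checked'' (it gives no further argument): unfold $\Lambda=\mathrm{Log}\,\mathcal{C}$ into validity in each frame, valuate $p$, $q$, $\vec r$ independently, and use the substitution identity $\vartheta(\varphi(\psi,\vec r))=\varphi_\mathcal{F}(\vartheta(\psi),\vec V)$. The only blemish is notational: in the monotonicity step you reuse $V$ for the larger set while $\vec V$ already denotes the parameter values.
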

    Since our language is finitary, we have no syntactic counterpart to complete additivity.
    One can show (see~\cite[Proposition~2.3]{Benth26}) that a formula is completely additive in the class of all Kripke frames
    iff it is additive in this class, i.e., iff it is ${\rm K}$-additive. 
    It is easy to adapt this argument to the classes of frames defined by some of the usual first-order 
    conditions such as reflexivity, symmetry, and seriality.
    At the same time, the situation is quite different for the classes of transitive frames:
    \begin{proposition}
        \label{p:s4-dbd}
        The formula $\alpha=\Diamond\Box\Diamond p$ is normal ${\rm S4}$-additive, 
        but it is not completely additive in the class of all reflexive transitive frames. 
    \end{proposition}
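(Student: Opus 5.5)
The proof has two parts: normality and additivity in $\mathrm{S4}$, and a counterexample to complete additivity. For the first part I will use Proposition~\ref{p:add-c} with $\mathcal{C}$ the class of all reflexive transitive frames, so it suffices to argue semantically on such frames. Normality is immediate, since $\Diamond\bot\sim_{\mathrm{S4}}\bot$ gives $\Diamond\Box\Diamond\bot\sim\bot$. Monotonicity is clear because $\alpha$ is $p$-positive, so one inclusion of additivity is automatic. The whole task is therefore to show
\[
\mathrm{S4}\vdash\Diamond\Box\Diamond(p\lor q)\to\Diamond\Box\Diamond p\lor\Diamond\Box\Diamond q .
\]

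For this inclusion I will argue in a finite model, using the finite model property of $\mathrm{S4}$ (finite reflexive transitive frames). Suppose $w\models\Diamond\Box\Diamond(p\lor q)$, so some $v$ with $wRv$ satisfies $\Box\Diamond(p\lor q)$. In a finite preorder, $v$ sees some final cluster $C$, meaning a cluster all of whose successors lie in $C$. Every point of $C$ satisfies $\Diamond(p\lor q)$, so $C$ contains a point satisfying $p$ or a point satisfying $q$. Say $p$ holds somewhere in $C$. Since $C$ is final, every point $u\in C$ satisfies $\Box\Diamond p$. As $wRu$, we get $w\models\Diamond\Box\Diamond p$.

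An alternative, purely syntactic route is to use the $\mathrm{S4.1}$-style reasoning $\Box\Diamond(p\lor q)\to\Diamond\Box\Diamond p\lor\Diamond\Box\Diamond q$. I would stick to the finite-frame argument above, because the appeal to the finite model property is standard.

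For the second part I need an infinite reflexive transitive frame where complete additivity fails. I take $W=\omega$ with $R={\le}$ and $U_i=\{i\}$. Each $\Diamond U_i=\{0,\dots,i\}$, so $\Box\Diamond U_i=\emptyset$, because points above $i$ cannot see $i$. Hence $\alpha_\mathcal{F}(U_i)=\emptyset$ for every $i$. On the other hand $\bigcup_i U_i=\omega$, and $\Diamond\Box\Diamond\omega=\omega\neq\emptyset$. So $\alpha_\mathcal{F}(\bigcup_i U_i)\neq\bigcup_i\alpha_\mathcal{F}(U_i)$.

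The only delicate step is justifying that it suffices to work with finite frames, and the existence of final clusters there. Both are standard facts about $\mathrm{S4}$, so I expect no real obstacle. The contrast between the two parts is exactly that final clusters need not exist in infinite frames like $(\omega,\le)$.
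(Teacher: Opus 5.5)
Your proof is correct and is essentially the paper's argument: reduce to finite reflexive transitive frames via Proposition~\ref{p:add-c} (the class to plug in there is the class of \emph{finite} $\mathrm{S4}$-frames, which is what your appeal to the finite model property amounts to). On those frames you observe that $\Diamond\Box\Diamond p$ holds at $w$ iff $w$ sees a $p$-world in a final cluster, and you use the same counterexample $(\omega,\le)$ with singletons. The only difference is packaging: the paper records the final-cluster observation as $\alpha_\mathcal{F}=(R')^{-1}$, which gives complete additivity and normality on finite frames in one stroke, whereas you prove binary additivity directly and normality separately (for normality you also need $\Box\bot\sim_{\mathrm{S4}}\bot$, not just $\Diamond\bot\sim\bot$).
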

    \begin{proof}
        One can check that $\alpha(p\lor q)\leftrightarrow\alpha(p)\lor\alpha(q)$ and $\alpha(\bot)\leftrightarrow\bot$
        are derivable in ${\rm S4}$. Here we give a semantic argument which is interesting in its own right.
        We say that a world $v$ of an ${\rm S4}$-frame $\mathcal{F}=(W,R)$ is \emph{maximal} 
        if $v\mathrel{R}u\Rightarrow u\mathrel{R} v$ for all $u\in W$.
        Let $R'$ be the relation on $W$ such that $w\mathrel{R'}v$ iff $w\mathrel{R}v$ and $v$ is maximal.
        It is easy to check that, $\alpha_\mathcal{F}=(R')^{-1}$ whenever $\mathcal{F}$ is finite, 
        whence $\alpha$ is (completely) additive in the class of finite $\rm S4$-frames.
        By Proposition~\ref{p:add-c}, $\alpha$ is ${\rm S4}$-additive. 

        Now let us consider the frame $\mathcal{F}=(\omega, \leq)$. It is easy to see that
        $\alpha_\mathcal{F}\omega=\omega$ and $\alpha_\mathcal{F}\{n\}=\emptyset$ for each $n\in\omega$.
        Therefore, $\alpha_\mathcal{F}$ is not completely additive, and $\alpha$ is not completely additive 
        in the class of all reflexive transitive frames.
    \end{proof}
    \begin{corollary}
        \label{c:k4-dbd}
        The formula $\beta=\tau_{\Dmd p\lor p}\alpha$ is normal ${\rm K4}$-additive,
        but it is not completely additive in the class of all transitive frames.
    \end{corollary}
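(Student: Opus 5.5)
The plan is to derive both parts of the corollary from Proposition~\ref{p:s4-dbd}, using the interpretation $\mathrm{S4}\vdash\varphi\Leftrightarrow\mathrm{K4}\vdash\tau_{\rm refl}\varphi$ recalled in the introduction together with the remark that $\tau_{\rm refl}$ may be replaced by $\tau_{\Dmd p\lor p}$. Write $\tau:=\tau_{\Dmd p\lor p}$. Since $\tau$ commutes with Boolean connectives and with substitution of $p\lor q$ and $\bot$ for $p$, the translation of the $\mathrm{S4}$-theorem $\alpha(p\lor q)\leftrightarrow\alpha(p)\lor\alpha(q)$ is, up to renaming $p_k\mapsto p_{2k}$, the formula $\beta(p\lor q)\leftrightarrow\beta(p)\lor\beta(q)$. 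Similarly, the translation of $\alpha(\bot)\leftrightarrow\bot$ is $\beta(\bot)\leftrightarrow\bot$. Hence both are $\mathrm{K4}$-theorems. Alternatively, one can argue semantically. Every transitive frame $(W,R)$ has reflexive closure $R^=$, and for any valuation, $\tau\varphi$ in $(W,R)$ has the same truth set as $\varphi$ in $(W,R^=)$, since $\Dmd p\lor p$ computes the $R^=$-preimage. So $\beta_{(W,R)}=\alpha_{(W,R^=)}$, which is additive and normal by Proposition~\ref{p:s4-dbd} and Proposition~\ref{p:add-c}. By Proposition~\ref{p:add-c} again (K4 is complete for transitive frames), $\beta$ is normal $\mathrm{K4}$-additive.

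For the failure of complete additivity, I will take the transitive frame $(\omega,<)$, whose reflexive closure is $(\omega,\leq)$. By the identity $\beta_{(\omega,<)}=\alpha_{(\omega,\leq)}$ above, the computation in the proof of Proposition~\ref{p:s4-dbd} transfers directly: $\beta_{(\omega,<)}\omega=\omega$, while $\beta_{(\omega,<)}\{n\}=\emptyset$ for each $n$. Thus $\beta_{(\omega,<)}$ does not preserve the union $\omega=\bigcup_n\{n\}$.

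The only point needing care is the semantic identity $\beta_{(W,R)}=\alpha_{(W,R^=)}$. I will prove it by induction on $\tau$. The $\Box$ case needs a little attention: $\tau$ is defined on $\Dmd$, and $\Box$ is the dual, giving $\Box\psi\wedge\psi$, which is $R^=$-necessity. Also, $\alpha$ is already the fully translated formula, so the inductive clause only has to be checked once per occurrence of $\Dmd$ or $\Box$. I do not expect a real obstacle here.
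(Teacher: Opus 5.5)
Your proof is correct and follows essentially the paper's route: the paper likewise uses $\mathrm{S4}=\tau^{-1}_{\Dmd p\lor p}\mathrm{K4}$ to transfer the $\mathrm{S4}$-theorems expressing additivity and normality of $\alpha$, and relies on $\beta$ and $\alpha$ coinciding semantically to reuse the counterexample from Proposition~\ref{p:s4-dbd}. The only difference is cosmetic. The paper notes $\beta_\mathcal{F}=\alpha_\mathcal{F}$ for reflexive $\mathcal{F}$ and can take the transitive frame $(\omega,\leq)$ itself. You instead pass through the irreflexive frame $(\omega,<)$ using the slightly more general identity $\beta_{(W,R)}=\alpha_{(W,R^=)}$.
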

    \begin{proof}
        Notice that ${\rm S4}=\tau^{-1}_{\Dmd p\lor p}{\rm K4}$ and $\beta_\mathcal{F}=\alpha_\mathcal{F}$ 
        for a reflexive frame $\mathcal{F}$. Then the claim follows from Proposition~\ref{p:s4-dbd}.
    \end{proof}
    We now turn to monotone formulas.
    It is easy to see that every $p$-positive formula is monotone in any normal modal logic. 
    The converse does not hold in general: in some frames, even completely additive formulas
    may fail to be equivalent to any positive formula.
    \begin{example}
        \label{ex:c2}
        Let $\Lambda$ be the logic of the frame $\mathcal{F} = (\{0, 1\}, \{0, 1\}^2)$, 
        $\varphi$ be the formula $\Box p \vee \neg p \wedge \Dmd p$. 
        It is easy to see that $\varphi_\mathcal{F}$ is (completely) additive, whence $\varphi$ is $\Lambda$-additive. 
        At the same time, it was shown in~\cite[Proposition 4.8]{Dvo26} that $\varphi$ is not equivalent 
        to any $p$-positive formula in $\Lambda$.
    \end{example}
    Notice that $\Lambda$ from Example~\ref{ex:c2} has many good properties: it is tabular, 
    its class of frames is first-order definable, it even has the Craig interpolation property (CIP)~\cite{Maks80}
    but lacks the Lyndon interpolation property (LIP)~\cite{Maks82}. 
    The last fact turns out to be essential: for logics with LIP, we have the following
    analog of the classical Lyndon's result~\cite{Lynd59Hom}:
    \begin{theorem}[{\cite[Theorem 4.4]{Dvo26}}]
      \label{t:lind}
      Let $\Lambda$ be a normal logic with LIP. Then a formula
      $\varphi$ is monotone in $\Lambda$ iff it is $\Lambda$-equivalent to some $p$-positive
      formula.
    \end{theorem}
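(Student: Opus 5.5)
The easy direction is immediate. Let $\varphi$ be $\Lambda$-equivalent to a $p$-positive formula $\varphi'$. An induction on $\varphi'$ shows that $\Lambda \vdash (p\to p\lor q) \to (\varphi'(p)\to\varphi'(p\lor q))$. Variables, negated parameters and constants are trivial cases. The cases $\wedge$ and $\vee$ are monotone Boolean operations. The cases $\Box$ and $\Dmd$ follow from normality, via necessitation and the K axiom. Since $p \to p\lor q$ is a tautology, monotonicity of $\varphi'$ follows, and it transfers to $\varphi$ by equivalent replacement.

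For the converse, let $\varphi(p,\vec r)$ be monotone in $\Lambda$, so that $\Lambda\vdash \varphi(p)\to\varphi(p\lor q)$. We first substitute $q\mapsto p'$, where $p'$ is a fresh variable, and use monotonicity twice (together with the substitution $p \mapsto p\wedge p'$, $q \mapsto p'$) to obtain
\[
\Lambda \vdash \varphi(p\wedge p') \to \varphi(p').
\]
The aim is to use Lyndon interpolation on a suitable implication in which $p$ occurs only positively in the consequent and only negatively in the antecedent. The cleanest choice is
\[
\Lambda \vdash \varphi(p) \wedge (p\to p') \to \varphi(p'),
\]
where $p\to p'$ is shorthand for $\neg p\lor p'$. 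This follows from monotonicity: substitute $p\mapsto p$ and $q\mapsto p'$, and note that $(p\to p')$ makes $p\lor p'$ equivalent to $p'$. That equivalence needs to hold under boxes, though, so the correct version uses the "master modality" form of the hypothesis.

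This is the main obstacle, because for a general normal logic there is no single master box. I would handle it by bounding the modal depth. Put $\Box^{\le n}\chi := \bigwedge_{i\le n}\Box^i\chi$, where $n$ is the modal depth of $\varphi$. Then, by an induction on subformulas in any normal logic,
\[
\Lambda\vdash \Box^{\le n}(p\leftrightarrow \psi)\to(\varphi(p)\leftrightarrow\varphi(\psi)).
\]
Taking $\psi = p\lor p'$ and combining this with monotonicity yields
\[
\Lambda\vdash \varphi(p)\wedge \Box^{\le n}(\neg p\lor p')\to\varphi(p').
\]

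In this implication $p$ occurs in the antecedent both inside $\varphi(p)$ and negatively inside the box. To avoid mixing the two, rename first. Take $\varphi(s)$ with $s$ fresh, and write the implication as
\[
\varphi(s)\wedge\Box^{\le n}(\neg s\lor p)\to\varphi(p).
\]
Here $s$ is the old $p$ and $p$ is the old $p'$. Now apply LIP. It gives an interpolant $\theta$ whose variables are common to both sides, so $\theta\in{\rm Fm}(p,\vec r)$ because $s$ does not occur on the right. Moreover, $p$ occurs positively in $\theta$ only if it occurs positively on both sides, and negatively only if it occurs negatively on both sides. On the left, $p$ occurs only positively, inside $\neg s\lor p$. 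Hence $\theta$ is $p$-positive.

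It remains to show $\theta \sim_\Lambda \varphi(p)$. One direction is $\theta\to\varphi(p)$, which holds by the interpolation property. For the other, substitute $s\mapsto p$ in the left-to-middle implication. This gives
\[
\varphi(p)\wedge\Box^{\le n}(\neg p\lor p)\to\theta,
\]
and since the boxed conjunct is a theorem, $\varphi(p)\to\theta$. Thus $\varphi$ is $\Lambda$-equivalent to the $p$-positive formula $\theta$.

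The substitution $s\mapsto p$ does not affect $\theta$, since $s$ does not occur in $\theta$. This is exactly why the renaming trick is needed. I expect the delicate points to be checking that the version of LIP in use controls polarity for variables with respect to this particular form of implication, and verifying the depth-bounded replacement lemma.
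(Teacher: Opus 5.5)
Your proof is correct and has the shape the paper relies on. The paper does not reprove this theorem (it is cited from \cite{Dvo26}), but Remark~\ref{r:constr} indicates that the $p$-positive equivalent is computed there as a Lyndon interpolant of an explicitly constructed implication, which is exactly what you do with $\varphi(s)\wedge\Box^{\le n}(\neg s\vee p)\to\varphi(p)$, where $n$ is the modal depth of $\varphi$. One small slip: instantiating your replacement lemma literally with $\psi=p\vee p'$ gives the antecedent $\Box^{\le n}(p'\to p)$, which is the wrong direction; the instance you need is $\Box^{\le n}\bigl((p\vee p')\leftrightarrow p'\bigr)\to\bigl(\varphi(p\vee p')\leftrightarrow\varphi(p')\bigr)$, whose antecedent follows from $\Box^{\le n}(\neg p\vee p')$, and combining it with monotonicity then yields your displayed implication.
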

    It is well known that the logics we consider in this paper ($\mathrm{K}$, $\mathrm{GL}$,
    $\mathrm{S4}$, $\mathrm{S5}$, and $\mathrm{Grz}$) have LIP~\cite{GabMaks05}. Therefore, we will
    look for additive formulas only among $p$-positive ones. 

\section{General results on additive formulas}
    \label{s:gen}
  We say that $A \subseteq \mathrm{Fm}(p,\vec r)$ is a \emph{complete set of (normal) $\Lambda$-additive
  formulas with parameters $\vec r$} if every $\alpha \in A$ is (normal) $\Lambda$-additive 
  and, for each (normal)
  $\Lambda$-additive formula $\varphi\in{\rm Fm}(p,\vec r)$, there is $\alpha \in A$ such that
  $\varphi \sim_\Lambda \alpha$. The goal of our work is to provide syntactic descriptions of some
  complete sets of $\Lambda$-additive and normal $\Lambda$-additive formulas. 
  These two tasks are in fact equivalent: 
  \begin{lemma}
    If $A$ is a complete set of $\Lambda$-additive formulas, then
    $\{\alpha \in A \mid \alpha_\bot \sim_\Lambda \bot\}$ is a complete set of normal
    $\Lambda$-additive formulas. Conversely, if $N$ is a complete set of normal $\Lambda$-additive
    formulas, then $\{\alpha \vee \nu \mid \alpha \in N, \nu \in {\rm Fm}(\vec r)\}$ is a complete set of
    $\Lambda$-additive formulas.
  \end{lemma}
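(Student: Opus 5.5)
The first direction needs no new idea. Suppose $A$ is a complete set of $\Lambda$-additive formulas and put $N_A:=\{\alpha\in A\mid \alpha_\bot\sim_\Lambda\bot\}$. Every element of $N_A$ is additive because it lies in $A$. It is normal by definition, since normality of $\alpha$ says exactly that $\alpha_\bot=\alpha(\bot,\vec r)\sim_\Lambda\bot$.

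For completeness of $N_A$, take a normal $\Lambda$-additive $\varphi\in{\rm Fm}(p,\vec r)$. Completeness of $A$ gives some $\alpha\in A$ with $\varphi\sim_\Lambda\alpha$. Substituting $\bot$ for $p$ in the $\Lambda$-provable equivalence $\varphi\leftrightarrow\alpha$ preserves provability, because $\Lambda$ is closed under substitution. Hence $\alpha_\bot\sim_\Lambda\varphi_\bot\sim_\Lambda\bot$, so $\alpha\in N_A$.

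For the converse, let $N$ be a complete set of normal $\Lambda$-additive formulas and put $A_N:=\{\alpha\vee\nu\mid\alpha\in N,\ \nu\in{\rm Fm}(\vec r)\}$.

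Each $\alpha\vee\nu$ is $\Lambda$-additive. Since $\nu$ does not contain $p$ or $q$, we have
\[
(\alpha\vee\nu)(p\vee q)=\alpha(p\vee q)\vee\nu\sim_\Lambda\alpha(p)\vee\alpha(q)\vee\nu\sim_\Lambda(\alpha(p)\vee\nu)\vee(\alpha(q)\vee\nu),
\]
which is just propositional reasoning inside $\Lambda$.

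For completeness of $A_N$, take any $\Lambda$-additive $\varphi\in{\rm Fm}(p,\vec r)$. The key step is the decomposition $\varphi\sim_\Lambda\psi\vee\varphi_\bot$, where $\psi:=\varphi\wedge\neg\varphi_\bot$. Clearly $\varphi_\bot\in{\rm Fm}(\vec r)$, and the decomposition is a tautology once we know $\varphi_\bot\to\varphi$. That implication follows from monotonicity, which is a consequence of additivity: substitute $\bot$ for $p$ and $p$ for $q$ in $\varphi(p\vee q)\to\varphi(p)\vee\varphi(q)$'s converse direction, i.e. in $\varphi(p)\vee\varphi(q)\to\varphi(p\vee q)$, to get $\varphi(\bot)\to\varphi(\bot\vee p)$, and then use $\bot\vee p\leftrightarrow p$ together with equivalent replacement.

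Next, $\psi$ is normal: $\psi_\bot=\varphi_\bot\wedge\neg\varphi_\bot\sim_\Lambda\bot$. It is also additive, since
\[
\psi(p\vee q)\sim_\Lambda(\varphi(p)\vee\varphi(q))\wedge\neg\varphi_\bot\sim_\Lambda\psi(p)\vee\psi(q),
\]
and here we must check that $\neg\varphi_\bot$ is unaffected by the substitutions of $p\vee q$, $p$, $q$ for $p$. It is, because $\varphi_\bot$ contains no $p$ and no $q$. This uses that $q=p_2$ is not a parameter, so it is distinct from the variables in $\vec r$.

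By completeness of $N$ there is $\alpha\in N$ with $\psi\sim_\Lambda\alpha$, and therefore $\varphi\sim_\Lambda\alpha\vee\varphi_\bot\in A_N$.

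The only mildly delicate point is the substitution bookkeeping in the additivity of $\psi$, namely that $q$ is distinct from the parameters; everything else is routine propositional reasoning inside $\Lambda$.
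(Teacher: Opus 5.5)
Your proof is correct and follows the paper's argument. The first part is the routine substitution check. The converse uses the same decomposition $\varphi\sim_\Lambda(\varphi\wedge\neg\varphi_\bot)\vee\varphi_\bot$, obtained from monotonicity, with $\varphi\wedge\neg\varphi_\bot$ normal $\Lambda$-additive and $\varphi_\bot\in{\rm Fm}(\vec r)$; you simply spell out the substitution bookkeeping (including that $q$ is not a parameter) that the paper leaves implicit.
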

  \begin{proof}
    The first claim is trivial. It is also easy to see that $\alpha \vee \nu$ is additive in
    $\Lambda$, whenever $\alpha$ is additive. Suppose that $\varphi$ is additive in
    $\Lambda$. Since $\varphi$ is monotone in $\Lambda$,
    $\varphi \sim_\Lambda (\varphi \wedge \neg\varphi_\bot) \vee \varphi_\bot$. It remains to notice
    that $\varphi \wedge \neg\varphi_\bot$ is normal $\Lambda$-additive and $\varphi_\bot\in{\rm Fm}(\vec r)$.
  \end{proof}
  The following lemma can be easily checked:
  \begin{lemma}
    \label{l:add-closed}
    Let $\Lambda$ be a normal logic. Then $\bot$ and $p$ are normal $\Lambda$-additive formulas. If
    $\alpha, \beta$ are normal $\Lambda$-additive formulas, $\varkappa \in {\rm Fm}(\vec r)$, then
    $\alpha \vee \beta$, $\varkappa \wedge \alpha$, and $\Dmd\alpha$ are also normal
    $\Lambda$-additive.
  \end{lemma}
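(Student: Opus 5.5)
The plan is to verify each item directly from the definitions, working syntactically inside an arbitrary normal logic $\Lambda$. The only tools needed are the normality facts $\Dmd(\varphi\lor\psi)\leftrightarrow\Dmd\varphi\lor\Dmd\psi$ and $\Dmd\bot\leftrightarrow\bot$, equivalent replacement, and closure under substitution. Alternatively, each step can be read off semantically on Kripke frames via Proposition~\ref{p:add-c}. That route applies only when $\Lambda$ is Kripke complete, though, so I prefer the syntactic route, which works for every normal logic.

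\emph{Base cases.} For $\bot$ both sides of each equivalence are literally $\bot$ (or $\bot\lor\bot$), so they are tautologies. For $p$, additivity reads $p\lor q\leftrightarrow p\lor q$ and normality reads $\bot\leftrightarrow\bot$.

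\emph{Disjunction.} Assume $\alpha(p\lor q)\leftrightarrow\alpha(p)\lor\alpha(q)$ and the analogous equivalence for $\beta$ are in $\Lambda$. Then $(\alpha\lor\beta)(p\lor q)$ is equivalent to $\alpha(p)\lor\alpha(q)\lor\beta(p)\lor\beta(q)$, and rearranging disjuncts by classical tautologies gives $(\alpha\lor\beta)(p)\lor(\alpha\lor\beta)(q)$. Normality holds since $\alpha_\bot\lor\beta_\bot\sim_\Lambda\bot\lor\bot\sim_\Lambda\bot$.

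\emph{Conjunction with $\varkappa\in{\rm Fm}(\vec r)$.} Since $\varkappa$ does not contain $p$, substituting into $p$ leaves it unchanged. So $\varkappa\wedge\alpha(p\lor q)\sim_\Lambda\varkappa\wedge(\alpha(p)\lor\alpha(q))$, and distributivity turns this into $(\varkappa\wedge\alpha(p))\lor(\varkappa\wedge\alpha(q))$. Normality: $\varkappa\wedge\alpha_\bot\sim_\Lambda\varkappa\wedge\bot\sim_\Lambda\bot$.

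\emph{Diamond.} Apply equivalent replacement to the additivity equivalence of $\alpha$ to get $\Dmd\alpha(p\lor q)\leftrightarrow\Dmd(\alpha(p)\lor\alpha(q))$. The normal-logic axiom $\Dmd(x\lor y)\leftrightarrow\Dmd x\lor\Dmd y$, instantiated by substitution with $x:=\alpha(p)$ and $y:=\alpha(q)$, then yields $\Dmd\alpha(p)\lor\Dmd\alpha(q)$. For normality, replacement gives $\Dmd\alpha_\bot\leftrightarrow\Dmd\bot$, and $\Dmd\bot\leftrightarrow\bot$ holds in every normal logic.

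There is no real obstacle here. The one point to watch is that the substitution of $p\lor q$ for $p$ does not affect the parameters $\vec r$, so $\varkappa$ is untouched. This is guaranteed because parameters are odd-indexed variables distinct from $p=p_0$ and $q=p_2$.
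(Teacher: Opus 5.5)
Your proposal is correct and matches what the paper intends. The paper gives no proof beyond remarking that the lemma ``can be easily checked'', and your direct syntactic verification is exactly that check. It uses equivalent replacement, substitution into the axioms $\Dmd(x\lor y)\leftrightarrow\Dmd x\lor\Dmd y$ and $\Dmd\bot\leftrightarrow\bot$, and the fact that $\varkappa$ contains neither $p$ nor $q$, so it is valid in every normal logic.
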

  For $\varkappa_0, \dots, \varkappa_n \in {\rm Fm}(\vec r)$, consider the formula
  \begin{gather*}
    \delta_{\vec \varkappa} \defeq \varkappa_0 \wedge \Dmd(\varkappa_1 \wedge \dots \wedge
    \Dmd(\varkappa_n \wedge p) \dots).
  \end{gather*}
  Such formulas are called \emph{$\delta$-formulas}. 
  Notice that $\delta_{\vec\varkappa}$ is true at $w$ in a Kripke model $(W,R,\vartheta)$ 
  iff there is a sequence of worlds $v_0,\dots,v_n$ such that 
  $w=v_0\mathrel{R}v_1\dots\mathrel{R}v_n$, $v_i\in\vartheta(\varkappa_i)$ for $i\leq n$,
  and $v_n\in\vartheta(p)$.
  Denote by $\Delta$ the set of all
  $\delta$-formulas. We write $\Delta_0 \subset_{\rm fin} \Delta$ if $\Delta_0$ is a finite subset
  of $\Delta$. From Lemma~\ref{l:add-closed}, we immediately obtain
  \begin{corollary}
    Let $\Lambda$ be a normal logic, $\Delta_0 \subset_{\rm fin} \Delta$, $\nu \in {\rm Fm}(\vec r)$. Then
    $\bigvee\Delta_0$ is normal $\Lambda$-additive and $\bigvee\Delta_0 \vee \nu$ is
    $\Lambda$-additive.
  \end{corollary}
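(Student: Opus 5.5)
The plan is to build $\bigvee\Delta_0$ by induction from the closure properties in Lemma~\ref{l:add-closed}, and then attach the constant disjunct $\nu$ at the end.

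First, I show by induction on $n$ that every $\delta$-formula $\delta_{\vec\varkappa}$ with $\vec\varkappa=\langle\varkappa_0,\dots,\varkappa_n\rangle$ is normal $\Lambda$-additive. For $n=0$, the formula is $\varkappa_0\wedge p$. Here $p$ is normal $\Lambda$-additive, and $\varkappa_0\in{\rm Fm}(\vec r)$, so Lemma~\ref{l:add-closed} applies directly. For the inductive step, note that
\[
\delta_{\langle\varkappa_0,\dots,\varkappa_{n+1}\rangle}=\varkappa_0\wedge\Dmd\,\delta_{\langle\varkappa_1,\dots,\varkappa_{n+1}\rangle}.
\]
By the induction hypothesis the inner formula is normal $\Lambda$-additive. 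Applying closure under $\Dmd$ and then under conjunction with $\varkappa_0\in{\rm Fm}(\vec r)$ (both from Lemma~\ref{l:add-closed}) gives the claim.

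Next, I treat $\bigvee\Delta_0$ by induction on $|\Delta_0|$. The empty disjunction is $\bot$, which is normal $\Lambda$-additive by Lemma~\ref{l:add-closed}. For the inductive step, closure under $\vee$ adds one more $\delta$-formula.

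For the last claim, take $\alpha=\bigvee\Delta_0$ and argue directly:
\[
(\alpha\vee\nu)(p\vee q)\sim_\Lambda\alpha(p)\vee\alpha(q)\vee\nu\sim_\Lambda(\alpha(p)\vee\nu)\vee(\alpha(q)\vee\nu),
\]
where the first step uses that $\nu$ does not contain $p$. This is the observation already used in the proof of the preceding lemma.

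There is no real obstacle here. The only care needed is to handle the empty disjunction and to note that $\nu$ is unaffected by substituting for $p$.
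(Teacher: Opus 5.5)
Your proof is correct and follows the paper's route. The paper derives the corollary immediately from Lemma~\ref{l:add-closed} (closure of normal additive formulas under $\bot$, $p$, $\vee$, $\varkappa\wedge{}$, and $\Dmd$), and it handles the disjunct $\nu$ by the same observation as in the preceding lemma; you have simply spelled out the two routine inductions that the paper leaves implicit.
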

  One of the difficulties in characterizing additive formulas is that 
  complete sets of additive formulas are not preserved under extensions or weakenings of logics. 
  Moreover, even if the same set $A$ constitutes a complete set of additive formulas for two logics 
  $\Lambda_1$ and $\Lambda_2$ with $\Lambda_1 \subset \Lambda_2$, $A$ need not be a complete set of additive formulas 
  for any logic between $\Lambda_1$ and $\Lambda_2$:
  \begin{example}
    We will show that, in both $\rm K$ and $\rm GL$, every additive formula is equivalent to a formula from the set 
    $A_0 \defeq \{\bigvee\Delta_0 \vee \nu \mid \Delta_0 \subset_{\rm fin} \Delta, \nu \in {\rm Fm}(\vec r) \}$.
    At the same time, ${\rm K}\subset{\rm K4}\subset{\rm GL}$ 
    and the formula $\beta$ from Corollary~\ref{c:k4-dbd} is $\rm K4$-additive but is not 
    $\rm K4$-equivalent to a formula from $A_0$, since formulas from $A_0$ are completely additive in all frames.
    $\beta$ does not appear in the description of $\rm K$- and $\rm GL$-additive formulas, 
    since it is not $\rm K$-additive and it is $\rm GL$-equivalent to a $\delta$-formula $\Dmd(\Box\bot\land p)$.
  \end{example}
  \begin{definition}
    Let $\Lambda$ be a normal modal logic, $\mathcal{D}$ be a class of general frames. We say that
    $\mathcal{D}$ is \emph{good for $\Lambda$} if $\mathrm{Log}\, \mathcal{D} = \Lambda$ and, for
    all $(\mathcal{F}, P) \in \mathcal{D}$, $\mathcal{F}$ is a $\Lambda$-frame
    (i.e., $\mathcal{F} \vDash \Lambda$).
  \end{definition}
  The following lemma trivially follows from the basic properties of p-morphisms:
  \begin{lemma}
    \label{l:gen-D}
    Suppose that $\Lambda = \mathrm{Log}\, \mathcal{C}$ for a class of Kripke frames $\mathcal{C}$
    and, for each $\mathcal{F} \in \mathcal{C}$, we have defined a $\Lambda$-frame
    $\mathcal{F}^\circ$ and a p-morphism $\pi : \mathcal{F}^\circ \twoheadrightarrow \mathcal{F}$.
    Then the class
    \begin{gather*}
      \pi^{-1}\mathcal{C} \defeq \bigl\{(\mathcal{F}^\circ, \pi^{-1}\mathcal{P}(W)) \bigm| \mathcal{F} =
      (W, R) \in \mathcal{C} \bigr\}
    \end{gather*}
    is good for $\Lambda$.
  \end{lemma}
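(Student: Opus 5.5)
We must check two things: that $\mathrm{Log}\,\pi^{-1}\mathcal{C} = \Lambda$, and that each underlying Kripke frame $\mathcal{F}^\circ$ is a $\Lambda$-frame. The second holds by hypothesis. First, though, we verify that $\pi^{-1}\mathcal{C}$ consists of general frames. Put $P := \pi^{-1}\mathcal{P}(W) = \{\pi^{-1}U \mid U \subseteq W\}$. Preimages commute with Boolean operations, so $P$ is closed under them. Writing $\mathcal{F}^\circ = (W^\circ, R^\circ)$, the forth and back conditions of a p-morphism give
\[
(R^\circ)^{-1}\pi^{-1}U = \pi^{-1}R^{-1}U .
\]
Hence $P$ is closed under $(R^\circ)^{-1}$, and $(\mathcal{F}^\circ, P)$ is indeed a general frame.

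Next we show that $\mathrm{Log}(\mathcal{F}^\circ, P) = \mathrm{Log}\,\mathcal{F}$ for each $\mathcal{F} \in \mathcal{C}$. Valuations $\vartheta^\circ$ on $\mathcal{F}^\circ$ with values in $P$ correspond exactly to valuations $\vartheta$ on $\mathcal{F}$: take $\vartheta^\circ(p_k) = \pi^{-1}\vartheta(p_k)$. Choosing one such $\vartheta(p_k)$ per variable, which is unique when $\pi$ is surjective, and using the displayed identity, an induction on formulas gives
\[
\vartheta^\circ(\varphi) = \pi^{-1}\vartheta(\varphi)
\]
for every $\varphi$. The Boolean steps are immediate, and the $\Dmd$ step is exactly the displayed identity. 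Since $\pi$ is surjective, $\vartheta(\varphi) = W$ iff $\pi^{-1}\vartheta(\varphi) = W^\circ$. So $\varphi$ is true in $(\mathcal{F}, \vartheta)$ iff it is true in $(\mathcal{F}^\circ, \vartheta^\circ)$. Ranging over all valuations then yields the claimed equality of logics.

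Finally, intersecting over $\mathcal{C}$ gives
\[
\mathrm{Log}\,\pi^{-1}\mathcal{C} = \bigcap_{\mathcal{F} \in \mathcal{C}} \mathrm{Log}\,\mathcal{F} = \Lambda .
\]
Together with the hypothesis that each $\mathcal{F}^\circ$ is a $\Lambda$-frame, this shows that $\pi^{-1}\mathcal{C}$ is good for $\Lambda$.

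The only step needing any care is the identity $(R^\circ)^{-1}\pi^{-1} = \pi^{-1}R^{-1}$. The inclusion $\subseteq$ uses the forth condition: if $x R^\circ y$ then $\pi x \mathrel{R} \pi y$. The inclusion $\supseteq$ uses the back condition: if $\pi x \mathrel{R} u$ then there is $y$ with $x R^\circ y$ and $\pi y = u$.
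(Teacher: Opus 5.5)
Your proof is correct and is exactly the argument the paper has in mind when it says the lemma ``trivially follows from the basic properties of p-morphisms.'' By surjectivity and the identity $(R^\circ)^{-1}\pi^{-1}U=\pi^{-1}R^{-1}U$, the map $\pi^{-1}$ is an isomorphism from the modal algebra of $\mathcal{F}$ onto $(P,(R^\circ)^{-1})$, so $\mathrm{Log}(\mathcal{F}^\circ,P)=\mathrm{Log}\,\mathcal{F}$. The condition that each $\mathcal{F}^\circ$ is a $\Lambda$-frame is simply part of the hypothesis.
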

  For a class of finite general frames $\mathcal{D}$, denote by $\mathcal{D}^\dagger$ the class of
  Kripke models
  \begin{gather*}
    \bigl\{(\mathcal{F}, \vartheta) \bigm| (\mathcal{F}, P) \in \mathcal{D},\; |\vartheta(p)| \leq 1,\; 
    \text{and } \vartheta(r) \in P \text{ for every parameter }r \bigr\}.
  \end{gather*}
  Notice that $\mathcal{D}^\dagger \vDash \varphi(p, \vec r) \leftrightarrow \psi(p, \vec r)$ iff,
  for all $\mathcal{G} = (\mathcal{F}, P)\in\mathcal D$, $\vec V \in P^n$,
  \[
    \varphi_\mathcal{F}(\emptyset, \vec V) = \psi_\mathcal{F}(\emptyset, \vec V)
    \quad\text{and}\quad
    \varphi_\mathcal{F}(\{u\}, \vec V) = \psi_\mathcal{F}(\{u\}, \vec V)
    \;\;\text{for every world } u \text{ in }\mathcal F
  \]
  \begin{lemma}
    \label{l:gen-2} Let $\Lambda$ be a normal logic, $\mathcal{D}$ be a good class of
    finite general frames for $\Lambda$, $\alpha$ and $\beta$ be $\Lambda$-additive formulas. Then
    $\mathcal{D}^\dagger \vDash \alpha \leftrightarrow \beta \Leftrightarrow \Lambda \vdash \alpha
    \leftrightarrow \beta$.
  \end{lemma}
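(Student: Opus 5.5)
The direction from right to left is immediate. Suppose $\Lambda\vdash\alpha\leftrightarrow\beta$ and take a model $(\mathcal F,\vartheta)\in\mathcal D^\dagger$ coming from $(\mathcal F,P)\in\mathcal D$. Since $\mathcal D$ is good, $\mathcal F$ is a $\Lambda$-frame, so $\mathcal F\vDash\alpha\leftrightarrow\beta$ and the equivalence holds in every Kripke model on $\mathcal F$, in particular in $(\mathcal F,\vartheta)$. This direction uses neither finiteness nor additivity.

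For the converse, assume $\mathcal D^\dagger\vDash\alpha\leftrightarrow\beta$. Since $\mathrm{Log}\,\mathcal D=\Lambda$, it suffices to show $\alpha\leftrightarrow\beta\in\mathrm{Log}\,\mathcal G$ for each $\mathcal G=(\mathcal F,P)\in\mathcal D$. That is, we need $\alpha_\mathcal F(U,\vec V)=\beta_\mathcal F(U,\vec V)$ for all $U,\vec V\in P$, and more precisely for any valuation with values in $P$. The variables of $\alpha,\beta$ are $p,\vec r$ together with possibly other (even) variables; these extra variables can be handled exactly like parameters, or we may assume they do not occur. Since $\alpha$ and $\beta$ are $\Lambda$-additive, the formulas $\alpha(p\lor q)\leftrightarrow\alpha(p)\lor\alpha(q)$ and the same for $\beta$ are in $\Lambda=\mathrm{Log}\,\mathcal D\subseteq\mathrm{Log}\,\mathcal G$. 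Hence, for sets in $P$, we have $\alpha_\mathcal F(U\cup U',\vec V)=\alpha_\mathcal F(U,\vec V)\cup\alpha_\mathcal F(U',\vec V)$, and likewise for $\beta$.

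We then decompose $U\in P$. Because $\mathcal F$ is finite, $U$ is a finite union of singletons $\{u\}$, $u\in U$. The obstacle is that the singletons need not belong to $P$, so additivity on $\mathcal G$ alone does not let us split $U$ into points.

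The fix is to use that $\mathcal F$ itself is a $\Lambda$-frame: $\mathcal F\vDash\Lambda$ gives additivity of $\alpha_\mathcal F(\cdot,\vec V)$ on all of $\mathcal P(W)$. For fixed $\vec V$ the extra variables range over arbitrary subsets, which is harmless since $\mathcal F$ validates the additivity instances under every valuation. By induction on $|U|$ (base case $U=\emptyset$, and by convention $U$ may be empty),
\[
\alpha_\mathcal F(U,\vec V)=\alpha_\mathcal F(\emptyset,\vec V)\cup\bigcup_{u\in U}\alpha_\mathcal F(\{u\},\vec V),
\]
and the same holds for $\beta$. The right-hand sides agree term by term by the hypothesis $\mathcal D^\dagger\vDash\alpha\leftrightarrow\beta$ together with the remark preceding the lemma, since $\vec V\in P^n$. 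This gives $\alpha_\mathcal F(U,\vec V)=\beta_\mathcal F(U,\vec V)$ for all $U\subseteq W$, not only for $U\in P$. Therefore $\alpha\leftrightarrow\beta\in\mathrm{Log}\,\mathcal G$ for every $\mathcal G\in\mathcal D$, and so $\Lambda\vdash\alpha\leftrightarrow\beta$.
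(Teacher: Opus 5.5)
Your proof is correct and follows essentially the same route as the paper: use finiteness of $\mathcal F$ and additivity to decompose $\vartheta(p)$ into singletons, then compare $\alpha$ and $\beta$ on valuations from $\mathcal{D}^\dagger$. The paper argues contrapositively and treats $U=\emptyset$ as a separate case. You argue directly, and you make explicit something the paper uses tacitly: additivity has to be applied in the Kripke frame $\mathcal F\vDash\Lambda$, because the singletons need not lie in $P$.
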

  \begin{proof}
    Suppose that $\Lambda \nvdash \alpha\leftrightarrow \beta$.
    Since ${\rm Log\,}\mathcal{C}=\Lambda$, there is
    $(\mathcal{F}, P) \in \mathcal{D}$ and valuation $\vartheta : \mathcal{V} \to P$ such that
    $\vartheta(\alpha) \neq \vartheta(\beta)$. Let $U \defeq \vartheta(p)$ and
    $\vec V \defeq \vartheta(\vec r)$. If $U = \emptyset$, then
    $(\mathcal{F}, \vartheta) \in \mathcal{D}^\dagger$, whence
    $\mathcal{D}^\dagger \nvDash \alpha \leftrightarrow \beta$. Otherwise, since $\alpha$ and
    $\beta$ are $\Lambda$-additive and $\mathcal{F}$ is finite,
    \begin{gather*}
      \vartheta(\alpha) = \alpha_\mathcal{F}(U, \vec V) = \bigcup_{u \in U}\alpha_\mathcal{F}(\{u\},
      \vec V)\quad\text{and}\quad
      \vartheta(\beta) = \beta_\mathcal{F}(U, \vec V) = \bigcup_{u \in U}\beta_\mathcal{F}(\{u\},
      \vec V).
    \end{gather*}
    Therefore, $\alpha_\mathcal{F}(\{u\}, \vec V) \neq \beta_\mathcal{F}(\{u\}, \vec V)$ for some $u\in U$ and
    $\mathcal{D}^\dagger \nvDash \alpha \leftrightarrow \beta$.

    The converse implication is trivial, since for every $(\mathcal{F},P)\in\mathcal{D}$, $\mathcal{F}$ is a $\Lambda$-frame.
  \end{proof}
  Suppose that $\Lambda$ has LIP. Then, by Theorem~\ref{t:lind}, all monotone formulas, and even more so all
  additive ones, are $\Lambda$-equivalent to $p$-positive formulas. 
  Let $\varphi\sim_{\mathcal{D}^\dagger}\psi:\Leftrightarrow\mathcal{D}^\dagger\vDash\varphi\leftrightarrow\psi$.
  It is easy to check that $\sim_{\mathcal{D}^\dagger}$
  is a congruence on the algebra of $p$-positive formulas
  $\mathfrak{Fm}^+(p,\vec r) \defeq (\mathrm{Fm}^+(p,\vec r), \bot, \top, \wedge, \vee, \Dmd, \Box)$. The quotient
  structure $\mathfrak{Fm}^+(p,\vec r)/{\sim}_{\mathcal{D}^\dagger}$ is a modal lattice (i.e., a distributive
  lattice with dual operators $\Dmd$ and $\Box$). By Lemma~\ref{l:gen-2}, each equivalence class in
  $\mathrm{Fm}^+(p,\vec r)/{\sim}_{\mathcal{D}^\dagger}$ contains at most one $\Lambda$-additive formula up
  to $\Lambda$-equivalence. In particular, we have the following:
  \begin{corollary}
    \label{l:gen-3} Let $\Lambda$ be a normal modal logic with LIP, $\mathcal{D}$ be a good
    for $\Lambda$ class of finite general frames, $A\subset{\rm Fm}(p,\vec r)$ be a set of $\Lambda$-additive formulas. Suppose that,
    for each $p$-positive formula $\varphi(p,\vec r)$, there is $\alpha \in A$ such that
    $\mathcal{D}^\dagger \vDash \varphi \leftrightarrow \alpha$. 
    Then $A$ is a complete set of $\Lambda$-additive formulas with parameters $\vec r$.
  \end{corollary}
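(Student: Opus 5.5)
We must show that every $\Lambda$-additive formula $\varphi\in{\rm Fm}(p,\vec r)$ is $\Lambda$-equivalent to some $\alpha\in A$; the elements of $A$ are $\Lambda$-additive by hypothesis. The plan is to reduce $\varphi$ to a $p$-positive formula, then compare it with a suitable $\alpha\in A$ on the models in $\mathcal{D}^\dagger$, and finally lift this agreement to $\Lambda$-equivalence via Lemma~\ref{l:gen-2}.

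First step: since $\varphi$ is additive, it is monotone in $\Lambda$. As $\Lambda$ has LIP, Theorem~\ref{t:lind} provides a $p$-positive formula $\varphi'$ with $\varphi\sim_\Lambda\varphi'$. We may take $\varphi'\in{\rm Fm}^+(p,\vec r)$. If the interpolation-based construction introduces no new variables, this is immediate. Otherwise, we substitute $\bot$ for any extraneous variables; this is harmless because $\Lambda$ is closed under substitution and $\varphi$ does not contain those variables. Since $\Lambda$-equivalence is preserved by substitution, $\varphi'$ is again $\Lambda$-additive.

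Second step: apply the hypothesis to $\varphi'$ to obtain $\alpha\in A$ with $\mathcal{D}^\dagger\vDash\varphi'\leftrightarrow\alpha$. Now both $\varphi'$ and $\alpha$ are $\Lambda$-additive, and $\mathcal{D}$ is a good class of finite general frames for $\Lambda$. Lemma~\ref{l:gen-2} therefore gives $\Lambda\vdash\varphi'\leftrightarrow\alpha$. Combining this with the first step yields $\varphi\sim_\Lambda\alpha$, which is what we need.

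The argument is essentially bookkeeping. The only point needing care is the first step: we must make sure that the $p$-positive equivalent stays within ${\rm Fm}^+(p,\vec r)$, and that additivity is transferred from $\varphi$ to $\varphi'$, so that Lemma~\ref{l:gen-2} applies to the pair $\varphi',\alpha$. The hypothesis concerns only $p$-positive formulas, so we cannot apply it to $\varphi$ directly.
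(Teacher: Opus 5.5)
Your proposal is correct and follows the paper's own argument. You pass from the additive formula to a $\Lambda$-equivalent $p$-positive one via Theorem~\ref{t:lind}, pick $\alpha\in A$ that agrees with it on $\mathcal{D}^\dagger$, and upgrade this agreement to $\Lambda$-equivalence by Lemma~\ref{l:gen-2}. You also fill in details the paper leaves implicit: keeping the positive equivalent inside ${\rm Fm}^+(p,\vec r)$ by substituting $\bot$ for stray variables, and transferring additivity to it.
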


\section{Additive formulas in S5}
    \label{s:S5}
  Consider the following class of frames:
  \begin{gather*}
    \mathcal{C}_\mathrm{S5} \defeq \{ (W, W \times W) \mid W \subseteq\omega\text{ is a finite non-empty set} \}.
  \end{gather*}
  It is well known that $\mathrm{Log}\, \mathcal{C}_\mathrm{S5} = \mathrm{S5}$. Let
  $B \defeq \{0, 1\}$. For a frame $\mathcal{F} = (W, W^2) \in \mathcal{C}_\mathrm{S5}$, we put
  $\mathcal{F}^\circ \defeq (W \times B, (W \times B)^2)$. Clearly
  $\mathcal{F}^\circ \vDash \mathrm{S5}$ and the mapping $\pi : (a, j) \mapsto a$ is a
  p-morphism from $\mathcal{F}^\circ$ onto $\mathcal{F}$. By Lemma~\ref{l:gen-D},
  the class of general frames $\mathcal{D}_\mathrm{S5} \defeq \pi^{-1}\mathcal{C}_\mathrm{S5}$ 
  is good for $\mathrm{S5}$.

  Consider the following sets of formulas
  \begin{gather*}
    \Gamma_\mathrm{S5} \defeq {\rm Fm}(\vec r) \cup \{\lambda \wedge p \mid \lambda \in {\rm Fm}(\vec r)\} \cup
    \{\mu \wedge \Dmd(\nu \wedge p) \mid \mu, \nu \in {\rm Fm}(\vec r)\}.\\
    A_\mathrm{S5} \defeq \biggl\{  (\lambda \wedge p) \vee \bigvee_{i<k}\bigl(\mu_i
    \wedge \Dmd(\nu_i \wedge p)\bigr) \vee \varkappa
    \biggm| k \in \omega;\;\varkappa, \lambda, \mu_i, \nu_i \in {\rm Fm}(\vec r) \biggr\}.
  \end{gather*}
  Notice that $A_\mathrm{S5}\subset A_0$ and
  $A_\mathrm{S5}$ contains all disjunctions of formulas from $\Gamma_\mathrm{S5}$ up to $\rm K$-equivalence.
  We will show that $A_\mathrm{S5}$ is the complete set of $\mathrm{S5}$-additive formulas.
  \begin{lemma}
    \label{S5-c} For $\alpha, \beta \in \Gamma_\mathrm{S5}$, there is
    $\gamma \in \Gamma_\mathrm{S5}$ such that
    $\mathcal{D}^\dagger_\mathrm{S5} \vDash \alpha \wedge \beta \leftrightarrow \gamma$.
  \end{lemma}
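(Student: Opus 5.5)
We argue by cases on the shapes of $\alpha$ and $\beta$. Write the three kinds of members of $\Gamma_\mathrm{S5}$ as type~0 ($\varkappa\in{\rm Fm}(\vec r)$), type~1 ($\lambda\wedge p$) and type~2 ($\mu\wedge\Dmd(\nu\wedge p)$).

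Several combinations are already handled by $\mathrm{K}$-equivalence, since conjunctions of parameter formulas stay in ${\rm Fm}(\vec r)$. Type~0 with anything is immediate, for example $\varkappa\wedge(\mu\wedge\Dmd(\nu\wedge p))\sim_{\rm K}(\varkappa\wedge\mu)\wedge\Dmd(\nu\wedge p)$. Likewise $(\lambda\wedge p)\wedge(\lambda'\wedge p)\sim_{\rm K}(\lambda\wedge\lambda')\wedge p$. The remaining cases genuinely need the restriction to $\mathcal{D}^\dagger_\mathrm{S5}$, where the frame is a cluster $W\times B$ with universal relation, $\vartheta(p)$ has at most one element, and every $\vartheta(r)$ lies in $\pi^{-1}\mathcal{P}(W)$, i.e.\ is a union of fibres $\{a\}\times B$.

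\textbf{Type~1 with type~2.} Consider $(\lambda\wedge p)\wedge\mu\wedge\Dmd(\nu\wedge p)$. Because the relation is universal, if $w$ satisfies $\lambda\wedge p$ then $w$ itself is the unique $p$-world. So $\Dmd(\nu\wedge p)$ holds at $w$ iff $w\models\nu$. Hence in $\mathcal{D}^\dagger_\mathrm{S5}$ the conjunction is equivalent to $(\lambda\wedge\mu\wedge\nu)\wedge p$, which is of type~1. This step only uses $|\vartheta(p)|\le 1$ and reflexivity.

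\textbf{Type~2 with type~2.} Consider $\mu\wedge\Dmd(\nu\wedge p)\wedge\mu'\wedge\Dmd(\nu'\wedge p)$. With at most one $p$-world $u$, both diamonds refer to the same $u$, so $\Dmd(\nu\wedge p)\wedge\Dmd(\nu'\wedge p)$ holds iff $u\models\nu\wedge\nu'$. The conjunction is therefore equivalent to $(\mu\wedge\mu')\wedge\Dmd(\nu\wedge\nu'\wedge p)$, again of type~2. The universal relation is used here to guarantee that $u$ is visible from every world, although even without it both diamonds would be witnessed by the single $p$-world.

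Finally, modal subformulas inside $\lambda,\mu,\nu$ need no treatment: these are parameter formulas in ${\rm Fm}(\vec r)$ and are simply carried along as conjuncts.

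The key observation is that $|\vartheta(p)|\le1$ collapses every $p$-witness to a single world. The only step requiring any care is type~1 with type~2, where the $p$-world must be identified with the current world. The fibred construction $\mathcal{F}^\circ$ plays no role in this lemma; it presumably matters later, for separating formulas.
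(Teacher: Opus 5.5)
Your proof is correct and follows the paper's case analysis: the two nontrivial cases reduce, via $|\vartheta(p)|\le 1$, to $(\lambda\wedge\mu\wedge\nu)\wedge p$ and $(\mu\wedge\mu')\wedge\Dmd(\nu\wedge\nu'\wedge p)$, reflexivity gives the converse in the first case, and all remaining cases hold already up to $\mathrm{K}$-equivalence. One small correction: the universal relation is not needed in the type~2/type~2 case, since the world witnessing either diamond is already accessible from the current world and satisfies $\nu\wedge\nu'\wedge p$.
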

  \begin{proof}
    If $\alpha = \lambda_1 \wedge p$ and $\beta = \mu_2 \wedge \Dmd(\nu_2 \wedge p)$, 
    where $\lambda_1,\mu_2,\nu_2\in{\rm Fm}(\vec r)$, then
    \begin{gather*}
      \mathcal{D}^\dagger_\mathrm{S5} \vDash \alpha \wedge \beta \leftrightarrow (\lambda_1 \wedge
      \mu_2 \wedge \nu_2) \wedge p.
    \end{gather*}
    Indeed, if $\alpha \wedge \beta$ is true at $w$ in
    $\mathcal{M} = (\mathcal{F}^\circ, \vartheta) \in \mathcal{D}^\dagger_\mathrm{S5}$, then
    $w \in \vartheta(p)$ and there is a world $v \in \vartheta(\nu_2 \wedge p)$. 
    Since $|\vartheta(p)| \leq 1$, $w = v$ and the right-hand side of the equivalence is true at $w$. 
    The converse implication is trivial.

    Similarly, if $\alpha = \mu_1 \wedge \Dmd(\nu_1 \wedge p)$ and
    $\beta = \mu_2 \wedge \Dmd(\nu_2 \wedge p)$, where $\mu_1,\mu_2,\nu_1,\nu_2\in{\rm Fm}(\vec r)$, then
    \begin{gather*}
      \mathcal{D}^\dagger_\mathrm{S5} \vDash \alpha \wedge \beta \leftrightarrow (\mu_1 \wedge
      \mu_2) \wedge \Dmd(\nu_1 \wedge \nu_2 \wedge p).
    \end{gather*}
    All other cases are either trivial or symmetric to those already considered.
  \end{proof}
  \begin{lemma}
    \label{S5-d} For $\alpha \in \Gamma_\mathrm{S5}$, there is $\beta \in \Gamma_\mathrm{S5}$ such
    that $\mathrm{S5} \vdash \Dmd\alpha \leftrightarrow \beta$.
  \end{lemma}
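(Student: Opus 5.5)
We argue by a direct case analysis on the three shapes of formulas in $\Gamma_\mathrm{S5}$, using only the standard $\mathrm{S5}$ facts. Note that here the equivalence is claimed in $\mathrm{S5}$ itself, not merely in $\mathcal{D}^\dagger_\mathrm{S5}$, so we must avoid the assumption $|\vartheta(p)|\le 1$. The facts we use are these. Every formula of the form $\Dmd\psi$ or $\Box\psi$ is \emph{modalized*}. For a modalized $\chi$ we have $\mathrm{S5}\vdash\Dmd(\chi\wedge\psi)\leftrightarrow\chi\wedge\Dmd\psi$, because $\chi\to\Box\chi$ and $\neg\chi\to\Box\neg\chi$ hold in $\mathrm{S5}$. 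Finally, $\mathrm{S5}\vdash\Dmd\Dmd\psi\leftrightarrow\Dmd\psi$.

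\textbf{Case $\alpha=\varkappa\in{\rm Fm}(\vec r)$.} Then $\Dmd\varkappa\in{\rm Fm}(\vec r)\subseteq\Gamma_\mathrm{S5}$, so we take $\beta:=\Dmd\varkappa$.

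\textbf{Case $\alpha=\lambda\wedge p$.} We have $\Dmd(\lambda\wedge p)=\top\wedge\Dmd(\lambda\wedge p)$ up to trivial equivalence, so we take $\beta:=\top\wedge\Dmd(\lambda\wedge p)$, with $\mu=\top$ and $\nu=\lambda$.

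\textbf{Case $\alpha=\mu\wedge\Dmd(\nu\wedge p)$.} Since $\Dmd(\nu\wedge p)$ is modalized, $\mathrm{S5}\vdash\Dmd(\mu\wedge\Dmd(\nu\wedge p))\leftrightarrow\Dmd\mu\wedge\Dmd\Dmd(\nu\wedge p)$. Applying $\Dmd\Dmd\psi\leftrightarrow\Dmd\psi$, this becomes $\Dmd\mu\wedge\Dmd(\nu\wedge p)$. This formula lies in $\Gamma_\mathrm{S5}$ because $\Dmd\mu\in{\rm Fm}(\vec r)$, so we take $\beta:=\Dmd\mu\wedge\Dmd(\nu\wedge p)$.

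The only step requiring any care is the last case: pulling the modalized conjunct out of the outer diamond. This is the standard $\mathrm{S5}$ validity $\Dmd(\psi\wedge\Dmd\chi)\leftrightarrow\Dmd\psi\wedge\Dmd\chi$. It is easily checked on the universal frames of $\mathcal{C}_\mathrm{S5}$, since there $\Dmd\chi$ has the same truth value at every world. Completeness $\mathrm{Log}\,\mathcal{C}_\mathrm{S5}=\mathrm{S5}$ then turns this semantic check into derivability in $\mathrm{S5}$.
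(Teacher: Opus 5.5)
Your proof is correct and matches the paper's: the paper also treats the case $\alpha=\mu\wedge\Dmd(\nu\wedge p)$ as the only non-trivial one, via the $\mathrm{S5}$-equivalence $\Dmd(\mu\wedge\Dmd(\nu\wedge p))\leftrightarrow\Dmd\mu\wedge\Dmd(\nu\wedge p)$, and dismisses the other cases as trivial. One small point: applying your modalized-conjunct fact with $\chi=\Dmd(\nu\wedge p)$ gives $\Dmd\mu\wedge\Dmd(\nu\wedge p)$ directly, so the intermediate step through $\Dmd\Dmd(\nu\wedge p)$ is unnecessary, though harmless.
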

  \begin{proof}
    Notice that $\mathrm{S5} \vdash \Dmd(\mu \wedge \Dmd(\nu \wedge p)) \leftrightarrow \Dmd\mu
    \wedge \Dmd(\nu \wedge p)$. All other cases are trivial.
  \end{proof}
  \begin{lemma}
    \label{S5-b} For $\alpha \in A_\mathrm{S5}$, there is $\beta \in A_\mathrm{S5}$ such that
    $\mathcal{D}^\dagger_\mathrm{S5} \vDash \Box\alpha \leftrightarrow \beta$.
  \end{lemma}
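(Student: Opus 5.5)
Write $\alpha = (\lambda\wedge p)\vee\bigvee_{i<k}(\mu_i\wedge\Dmd(\nu_i\wedge p))\vee\varkappa$. In a model $\mathcal{M}=(\mathcal{F}^\circ,\vartheta)\in\mathcal{D}^\dagger_\mathrm{S5}$ the frame is a single cluster, so $\Box\alpha$ is true at $w$ iff $\alpha$ holds at every world. I split by the two possible values of $\vartheta(p)$: either $\vartheta(p)=\emptyset$, or $\vartheta(p)=\{u\}$ for some world $u$. I compute what ``$\alpha$ holds everywhere'' means in each case, and then assemble a single formula of $A_\mathrm{S5}$ expressing the disjunction of the two cases.

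If $\vartheta(p)=\emptyset$, then $\alpha$ reduces to $\varkappa$, so $\Box\alpha$ is equivalent to $\Box\varkappa$. If $\vartheta(p)=\{u\}$, then $\Dmd(\nu_i\wedge p)$ holds everywhere iff $u\in\vartheta(\nu_i)$. So at a world $v\neq u$, $\alpha$ holds iff $\varkappa\vee\bigvee_{i\in I_u}\mu_i$ holds at $v$, where $I_u=\{i<k\mid u\in\vartheta(\nu_i)\}$. At $u$ itself, $\alpha$ holds iff $\lambda\vee\varkappa\vee\bigvee_{i\in I_u}\mu_i$ holds at $u$. For $I\subseteq k$ put $\chi_I\defeq\varkappa\vee\bigvee_{i\in I}\mu_i$ and $\nu_I\defeq\bigwedge_{i\in I}\nu_i\wedge\bigwedge_{i\notin I}\neg\nu_i$. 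Then $\Box\alpha$ is true iff either $p$ is empty and $\Box\varkappa$ holds, or for the unique $I$ with $u\in\vartheta(\nu_I)$ we have $(\lambda\vee\chi_I)$ at $u$ and $\chi_I$ at every $v\neq u$.

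The key difficulty is expressing ``$\chi_I$ holds at every world other than $u$'' with $p$ used only positively. I exploit $|\vartheta(p)|\le 1$ together with the doubling in $\mathcal{F}^\circ$. Since $\vartheta(\vec r)\in\pi^{-1}\mathcal{P}(W)$, every parameter formula $\chi\in{\rm Fm}(\vec r)$ has an extension closed under the twin map $(a,0)\leftrightarrow(a,1)$. Now $u$ has a twin $u'\neq u$, which satisfies the same parameter formulas and lies outside $\vartheta(p)$. Hence ``$\chi_I$ at every $v\neq u$'' is equivalent to $\Box\chi_I$, and then $(\lambda\vee\chi_I)$ at $u$ is automatic. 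This is exactly why the good class uses $\mathcal{F}^\circ$ rather than $\mathcal{F}$, and it is the step I expect to require care.

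Putting this together, $\mathcal{D}^\dagger_\mathrm{S5}\vDash\Box\alpha\leftrightarrow\Box\varkappa\vee\bigvee_{I\subseteq k}\bigl(\Box\chi_I\wedge\Dmd(\nu_I\wedge p)\bigr)$. For the left-to-right direction, in the case $\vartheta(p)=\emptyset$ we get $\Box\varkappa$; otherwise the disjunct for $I=I_u$ holds. For the converse, if $\Dmd(\nu_I\wedge p)$ holds, then the unique $p$-world $u$ has $I_u=I$, and $\Box\chi_I$ makes $\alpha$ true everywhere, since $\chi_{I_u}$ implies $\alpha$ at every world. The right-hand side has the form $(\bot\wedge p)\vee\bigvee_I(\mu'_I\wedge\Dmd(\nu_I\wedge p))\vee\varkappa'$ with $\mu'_I=\Box\chi_I$, $\varkappa'=\Box\varkappa\in{\rm Fm}(\vec r)$. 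So it lies in $A_\mathrm{S5}$, which completes the proof.
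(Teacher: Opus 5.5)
Your proof is correct and follows essentially the same route as the paper. You split on whether $\vartheta(p)$ is empty or a singleton $\{u\}$, take $I$ to be the indices with $u\in\vartheta(\nu_i)$, and use the twin $u'\neq u$ (which satisfies the same parameter formulas but is not a $p$-world) to force $\Box(\varkappa\vee\bigvee_{i\in I}\mu_i)$. There are two cosmetic differences. You use exact types $\bigwedge_{i\in I}\nu_i\wedge\bigwedge_{i\notin I}\neg\nu_i$ where the paper uses $\bigwedge_{i\in I}\nu_i$, which suffices because the direction $\beta\to\Box\alpha$ only needs $I\subseteq I_u$. You also argue the left-to-right direction directly rather than by contradiction.
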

  \begin{proof}
    Suppose that $\alpha = (\lambda \wedge p) \vee \bigvee_{i<k}\mu_i \wedge
    \Dmd(\nu_i \wedge p) \vee \varkappa$ for some ${k \in \omega}$ and
    $\varkappa, \lambda, \mu_i, \nu_i \in {\rm Fm}(\vec r)$. 
    Let $\un k:=\{0,\dots,k-1\}$.
    For $I \subseteq \un k$, we put
    $\mu^I \defeq \bigvee_{i \in I}\mu_i$ and $\nu_I \defeq \bigwedge_{i \in I}\nu_i$. 
    Consider the formula
    \begin{gather*}
      \beta \defeq \bigvee_{\emptyset \neq I \subseteq \un k} \bigl(
      \Box(\varkappa \vee \mu^I) \wedge \Dmd(\nu_I \wedge p) \bigr)\vee \Box\varkappa.
    \end{gather*}
    Clearly, $\mathrm{S5} \vdash \beta \to \Box\alpha$.
    For the converse, suppose that $\Box\alpha$ is true at $w$ in
    $\mathcal{M} = (\mathcal{F}^\circ, \vartheta) \in \mathcal{D}^\dagger_\mathrm{S5}$. If
    $\vartheta(p) = \emptyset$, then $\mathcal{M}, w \vDash \Box\varkappa$. Otherwise, there is a
    world $v$ such that $\vartheta(p) = \{v\}$. Let
    $I \defeq \{i < k \mid \mathcal{M}, v \vDash \nu_i\}$. Clearly,
    $\mathcal{M}, w \vDash \Dmd(\nu_I \wedge p)$ and
    $\mathcal{M} \vDash \neg\Dmd(\nu_i \wedge p)$ for $i \in \un k \setminus I$.

    Suppose that $\mathcal{M}, w \nvDash \Box(\varkappa \vee \mu^I)$. Then there is a world $u$ such
    that $\mathcal{M}, u \nvDash \varkappa \vee \mu^I$. Let $u' \defeq (a, 1-j)$, where $u = (a, j)$.
    Since $\pi u = \pi u' = a$,
    $\mathcal{M}, u' \nvDash \varkappa \vee \mu^I$. At least one of the worlds $u$ and $u'$ is
    distinct from $v$. Without loss of generality, we can assume that it is $u$. Then
    $\mathcal{M}, u \nvDash \lambda \wedge p$, whence $\mathcal{M}, u \nvDash \alpha$ and
    $\mathcal{M}, w \nvDash \Box\alpha$. Contradiction.
  \end{proof}

  \begin{theorem}
    A formula $\varphi$ is $\mathrm{S5}$-additive iff it is $\mathrm{S5}$-equivalent to a formula
    from $A_\mathrm{S5}$.
  \end{theorem}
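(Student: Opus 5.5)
The plan is to apply Corollary~\ref{l:gen-3} with $\Lambda=\mathrm{S5}$, $\mathcal{D}=\mathcal{D}_\mathrm{S5}$ and $A=A_\mathrm{S5}$. The ``if'' direction is immediate: $A_\mathrm{S5}\subset A_0$, and every formula of $A_0$ is $\Lambda$-additive by the corollary to Lemma~\ref{l:add-closed} (each disjunct $\mu_i\wedge\Dmd(\nu_i\wedge p)$ and $\lambda\wedge p$ is a $\delta$-formula, and $\varkappa\in{\rm Fm}(\vec r)$). For the ``only if'' direction, $\mathrm{S5}$ has LIP and $\mathcal{D}_\mathrm{S5}$ is a good class of finite general frames. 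So by Corollary~\ref{l:gen-3} it suffices to show the following: for every $p$-positive $\varphi(p,\vec r)$ there is $\alpha\in A_\mathrm{S5}$ with $\mathcal{D}^\dagger_\mathrm{S5}\vDash\varphi\leftrightarrow\alpha$.

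I prove this by induction on the construction of $p$-positive formulas, working modulo $\sim_{\mathcal{D}^\dagger_\mathrm{S5}}$, which is a congruence on $\mathfrak{Fm}^+(p,\vec r)$. The base cases are handled as follows:
\begin{itemize}
\item $\bot$, $\top$, and literals in the parameters lie in ${\rm Fm}(\vec r)\subseteq A_\mathrm{S5}$ (take $\lambda=\bot$, $k=0$);
\item $p$ is $(\top\wedge p)\vee\bot$.
\end{itemize}

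Next I note that, up to $\mathrm{K}$-equivalence, $A_\mathrm{S5}$ is exactly the set of finite disjunctions of members of $\Gamma_\mathrm{S5}$. Two disjuncts $\lambda\wedge p$ and $\lambda'\wedge p$ merge into $(\lambda\vee\lambda')\wedge p$, the ${\rm Fm}(\vec r)$-disjuncts merge similarly, and an empty disjunction is $\bot$. The inductive steps are then:
\begin{itemize}
\item Disjunction: $A_\mathrm{S5}$ is closed under $\vee$ by the merging just described.
\item Conjunction: distribute to write $\alpha\wedge\beta$ as a disjunction of pairwise conjunctions of elements of $\Gamma_\mathrm{S5}$. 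Each such conjunction is $\mathcal{D}^\dagger_\mathrm{S5}$-equivalent to an element of $\Gamma_\mathrm{S5}$ by Lemma~\ref{S5-c}, so the result is back in $A_\mathrm{S5}$ up to $\sim_{\mathcal{D}^\dagger_\mathrm{S5}}$.
\item $\Dmd$: since $\Dmd$ distributes over disjunction in $\mathrm{S5}$ (hence in $\mathcal{D}^\dagger_\mathrm{S5}$, as the underlying frames are $\mathrm{S5}$-frames), $\Dmd\alpha$ is a disjunction of $\Dmd\gamma$ with $\gamma\in\Gamma_\mathrm{S5}$. Each $\Dmd\gamma$ is equivalent to an element of $\Gamma_\mathrm{S5}$ by Lemma~\ref{S5-d}.
\item $\Box$: this is exactly Lemma~\ref{S5-b}.
\end{itemize}

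One subtlety is that the induction hypothesis gives only $\varphi\sim_{\mathcal{D}^\dagger}\alpha$, not $\mathrm{S5}$-equivalence. This is handled by the congruence property, which lets us replace subformulas under $\wedge,\vee,\Dmd,\Box$ and then invoke the lemmas on the representatives. I expect the only real content to be the $\Box$ step, already isolated in Lemma~\ref{S5-b}; given it, the remaining work is bookkeeping.

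Putting the steps together, every $p$-positive formula is $\mathcal{D}^\dagger_\mathrm{S5}$-equivalent to a member of $A_\mathrm{S5}$. Corollary~\ref{l:gen-3} then yields that $A_\mathrm{S5}$ is a complete set of $\mathrm{S5}$-additive formulas, which is the ``only if'' direction. Its proof in turn rests on Theorem~\ref{t:lind} and Lemma~\ref{l:gen-2}: an additive $\varphi$ is equivalent to a $p$-positive formula, which is $\mathcal{D}^\dagger$-equivalent to some additive $\alpha\in A_\mathrm{S5}$, and two additive formulas that agree on $\mathcal{D}^\dagger_\mathrm{S5}$ are $\mathrm{S5}$-equivalent.
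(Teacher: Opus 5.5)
Your proposal is correct and follows the paper's proof essentially verbatim. Like the paper, you reduce via Corollary~\ref{l:gen-3} to showing that every $p$-positive formula is $\mathcal{D}^\dagger_\mathrm{S5}$-equivalent to a member of $A_\mathrm{S5}$, and then induct using Lemma~\ref{S5-c} for $\wedge$, Lemma~\ref{S5-d} for $\Dmd$, and Lemma~\ref{S5-b} for $\Box$. You also explicitly check that $A_\mathrm{S5}\subset A_0$ consists of additive formulas, which the corollary requires and which the paper leaves implicit.
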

  \begin{proof}
    By Lemma~\ref{l:gen-3}, it is sufficient to prove that every $\varphi\in{\rm Fm}^+(p,\vec r)$ is
    equivalent to some formula from $A_\mathrm{S5}$ in $\mathcal{D}^\dagger_\mathrm{S5}$.
    We proceed by induction on the construction of $\varphi$. The base case and the induction step for
    $\vee$ are trivial. The induction step for $\wedge$ follows, by distributivity, from Lemma~\ref{S5-c}.
    The induction step for $\Dmd$ follows, by additivity, from Lemma~\ref{S5-d}. The induction step for $\Box$
    follows from Lemma~\ref{S5-b}.
  \end{proof}
  \begin{corollary}
    $\varphi$ is a normal additive formula in $\mathrm{S5}$ iff it is equivalent to a formula from
    $A_\mathrm{S5}$ with $\varkappa = \bot$.
  \end{corollary}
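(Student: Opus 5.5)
The corollary follows from the preceding theorem together with the first lemma of Section~\ref{s:gen}, which says that restricting a complete set of $\Lambda$-additive formulas to those with $\alpha_\bot\sim_\Lambda\bot$ gives a complete set of normal ones. The only task is to identify which members of $A_\mathrm{S5}$ are normal.

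For a formula
\[
\alpha=(\lambda\wedge p)\vee\bigvee_{i<k}\bigl(\mu_i\wedge\Dmd(\nu_i\wedge p)\bigr)\vee\varkappa\in A_\mathrm{S5},
\]
substituting $\bot$ for $p$ gives $\alpha_\bot\sim_{\rm K}\varkappa$. Every disjunct other than $\varkappa$ becomes $\bot$, since $\Dmd\bot\sim_{\rm K}\bot$. Hence $\alpha$ is normal in $\mathrm{S5}$ iff $\varkappa\sim_\mathrm{S5}\bot$.

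\emph{Right-to-left.} Suppose $\varphi\sim_\mathrm{S5}\alpha$ with $\alpha\in A_\mathrm{S5}$ and $\varkappa=\bot$. Then $\varphi$ is additive by the theorem and normal by the computation above.

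\emph{Left-to-right.} Let $\varphi$ be normal $\mathrm{S5}$-additive. By the theorem, $\varphi\sim_\mathrm{S5}\alpha$ for some $\alpha\in A_\mathrm{S5}$ with parameter part $\varkappa$. Normality gives $\varkappa\sim_\mathrm{S5}\alpha_\bot\sim_\mathrm{S5}\varphi_\bot\sim_\mathrm{S5}\bot$. Replacing $\varkappa$ by $\bot$ then yields an $\mathrm{S5}$-equivalent formula of $A_\mathrm{S5}$ with $\varkappa=\bot$.

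The only point needing care is that this replacement is legitimate, which holds because $\mathrm{S5}$-equivalence is a congruence. No real obstacle is expected.
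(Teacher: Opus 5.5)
Your proof is correct and matches the paper. There the corollary is read off from the theorem characterizing $\mathrm{S5}$-additive formulas together with the first lemma of Section~\ref{s:gen}, and the only computation needed is $\alpha_\bot\sim_{\rm K}\varkappa$ for $\alpha\in A_\mathrm{S5}$. Your direct treatment of both directions, using closure under the substitution $p\mapsto\bot$ to get $\varkappa\sim_\mathrm{S5}\varphi_\bot\sim_\mathrm{S5}\bot$, is exactly what that lemma packages.
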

  \begin{corollary}
    \label{c:S5-add}
    There are exactly four $\mathrm{S5}$-additive formulas without parameters up to
    $\mathrm{S5}$-equivalence: $\bot$, $p$, $\Dmd p$, and $\top$. Three of them are normal: $\bot$,
    $p$, and $\Dmd p$.
  \end{corollary}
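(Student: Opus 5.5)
We specialize the preceding theorem to the case of no parameters, i.e.\ $\vec r=\langle\rangle$. By the theorem, every $\mathrm{S5}$-additive formula $\varphi\in{\rm Fm}(p)$ is $\mathrm{S5}$-equivalent to some
\[
(\lambda\wedge p)\vee\bigvee_{i<k}\bigl(\mu_i\wedge\Dmd(\nu_i\wedge p)\bigr)\vee\varkappa,
\]
where now $\varkappa,\lambda,\mu_i,\nu_i\in{\rm Fm}(\langle\rangle)$ are variable-free formulas. The first step is to recall that in $\mathrm{S5}$ (indeed already in $\mathrm{KD}$, as $\mathrm{S5}\vdash\Dmd\top$) every variable-free formula is equivalent to $\bot$ or $\top$. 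This is proved by an easy induction: $\Dmd\bot\sim\bot$, $\Dmd\top\sim\top$, $\Box\top\sim\top$, $\Box\bot\sim\bot$, and the Boolean cases are trivial.

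Substituting these truth values, each disjunct collapses. The term $\lambda\wedge p$ becomes $\bot$ or $p$. Each $\mu_i\wedge\Dmd(\nu_i\wedge p)$ becomes $\bot$ or $\Dmd p$. The constant $\varkappa$ becomes $\bot$ or $\top$. Hence $\varphi$ is $\mathrm{S5}$-equivalent to a disjunction of some of $p$, $\Dmd p$, $\top$ (an empty disjunction being $\bot$). Since $\mathrm{S5}\vdash p\to\Dmd p$ by reflexivity, this simplifies to one of $\bot$, $p$, $\Dmd p$, $\top$. Conversely, all four are $\mathrm{S5}$-additive: $\bot$, $p$ and $\Dmd p$ by Lemma~\ref{l:add-closed}, and $\top$ trivially, since it is a constant.

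It remains to show that the four formulas are pairwise non-equivalent and to identify the normal ones. For this I would use a two-element cluster $(\{0,1\},\{0,1\}^2)$ with $\vartheta(p)=\{0\}$. There $p$ is true at $0$ only, $\Dmd p$ is true everywhere but $p$ fails at $1$, and $\top\neq\bot$. Moreover, with $\vartheta(p)=\emptyset$, $\Dmd p$ is false while $\top$ is true. This separates all four formulas. Substituting $\bot$ for $p$ gives $\bot,\bot,\Dmd\bot\sim\bot,\top$, so exactly $\bot$, $p$, $\Dmd p$ are normal. Alternatively, normality follows directly from the preceding corollary with $\varkappa=\bot$.

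No step is a genuine obstacle. The only point needing care is the reduction of closed formulas to $\bot/\top$, which relies on seriality of $\mathrm{S5}$ through $\Dmd\top\sim\top$.
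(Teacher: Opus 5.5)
Your proof is correct and is essentially the argument the paper intends: the paper states the corollary without proof, directly after the theorem on $A_{\mathrm{S5}}$. You specialize that theorem to $\vec r=\langle\rangle$, reduce the variable-free coefficients to $\bot$ or $\top$ in $\mathrm{S5}$, absorb $p$ into $\Dmd p$, and add explicit countermodels and the substitution $p\mapsto\bot$, filling in routine details the paper leaves implicit.
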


\section{Additive formulas in K and GL}
    \label{s:K-GL}
  For a set $\Sigma$, denote by $\Sigma^*$ the set of all finite sequences
  $\langle s_i\rangle_{i<n}$ with $s_i \in \Sigma$, including the empty one~$\langle\rangle$, 
  $\Sigma^+ \defeq \Sigma^* \setminus \{ \langle\rangle \}$. For
  two sequences $\vec a, \vec b \in \Sigma^*$, denote by $\vec a\vec b$ their concatenation. We consider the
  following relation on $\Sigma^*$:
  \begin{gather*}
    \vec a \lessdot \vec b :\Leftrightarrow \E{ c \in \Sigma}(\vec a = \vec b \langle c\rangle).
  \end{gather*}
  Denote by $<$ the transitive closure of $\lessdot$. Notice that $<$ is a strict partial order.
  \begin{definition}
    A non-empty set $S \subseteq \Sigma^*$ is a \emph{tree skeleton} if
    \begin{gather*}
      \A{\vec a, \vec b \in \Sigma^*}(\vec a < \vec b \wedge \vec b \in S \Rightarrow \vec a \in S).
    \end{gather*}
  \end{definition}
  Consider the following mappings:
  \begin{gather*}
    \begin{aligned}
      \pi : (\Sigma \times B)^* &\to \Sigma^*\\
      \langle(a_i, j_i)\rangle_{i < n} &\mapsto \langle a_i\rangle_{i < n},
    \end{aligned}
    \qquad\text{and}\qquad
    \begin{aligned}
      \iota : (\Sigma \times B)^+ &\to (\Sigma \times B)^+\\
      \langle(a_i,j_i)\rangle_{i\leq n} &\mapsto
      \langle(a_i,j_i)\rangle_{i<n}\langle(a_n,1-j_n)\rangle.
    \end{aligned}
  \end{gather*}
  For a tree skeleton $S \subseteq \Sigma^*$, let $S^\circ \defeq \pi^{-1}S$. Notice that
  $S^\circ \subseteq (\Sigma \times B)^*$ is also a tree skeleton and, for
  $v \in (\Sigma \times B)^+$, $\pi(\iota v) = \pi(v)$, whence $\iota v \in S^\circ$ iff
  $v \in S^\circ$.

  Consider the following classes of frames:
  \begin{align*}
    \mathcal{C}_\mathrm{K} &\defeq \{ (S, \lessdot) \mid S\subseteq\omega^* \text{ is a finite tree skeleton} \},\\
    \mathcal{C}_\mathrm{GL} &\defeq \{ (S, <) \mid S\subseteq\omega^* \text{ is a finite tree skeleton} \}.
  \end{align*}
  Strictly speaking, we should write $(S,{\lessdot}|_S)$ and $(S,{<}|_S)$, 
  but we drop the restriction for notational convenience.
  It is well known that $\mathrm{Log}\, \mathcal{C}_\mathrm{K} = \mathrm{K}$ and
  $\mathrm{Log}\, \mathcal{C}_\mathrm{GL} = \mathrm{GL}$. For
  $\mathcal{F} = (S, \lessdot) \in \mathcal{C}_\mathrm{K}$, we put
  $\mathcal{F}^\circ \defeq (S^\circ, \lessdot)$. For
  $\mathcal{F} = (S, <) \in \mathcal{C}_\mathrm{GL}$, we put
  $\mathcal{F}^\circ \defeq (S^\circ, <)$. Clearly, for
  $\Lambda \in \{\mathrm{K}, \mathrm{GL}\}$ and $\mathcal{F} \in \mathcal{C}_\Lambda$,
  $\mathcal{F}^\circ$ is a $\Lambda$-frame and
  $\pi : \mathcal{F}^\circ \twoheadrightarrow \mathcal{F}$ is a p-morphism.
  By Lemma~\ref{l:gen-D}, the class of
  general frames $\mathcal{D}_\Lambda \defeq \pi^{-1}\mathcal{C}_\Lambda$ is good for
  $\Lambda$.

  We are going to show that $A_0 = \{ \bigvee\Delta_0 \vee \varkappa \mid \Delta_0
    \subset_{\rm fin} \Delta, \varkappa \in {\rm Fm}(\vec r) \}$
  is a complete set of additive  formulas both in $\mathrm{K}$ and in $\mathrm{GL}$.
  \begin{lemma}
    \label{K-c} For $\vec \mu = \langle\mu_i\rangle_{i \leq m} \in ({\rm Fm}(\vec r))^{m+1}$ and
    $\vec \nu = \langle\nu_i\rangle_{i \leq n} \in ({\rm Fm}(\vec r))^{n+1}$,
    \begin{itemize}
      \item if $m \neq n$, then $\mathcal{D}^\dagger_\mathrm{K} \vDash \delta_{\vec \mu} \wedge
            \delta_{\vec \nu} \leftrightarrow \bot$;
      \item if $m = n$, then $\mathcal{D}^\dagger_\mathrm{K} \vDash \delta_{\vec \mu} \wedge
            \delta_{\vec \nu} \leftrightarrow \delta_{\vec \mu \wedge \vec \nu}$, where
            $\vec \mu \wedge \vec \nu = \langle\mu_i \wedge \nu_i\rangle_{i \leq m}$.
    \end{itemize}
  \end{lemma}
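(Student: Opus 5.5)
We argue semantically, in a fixed model $\mathcal{M} = (\mathcal{F}^\circ, \vartheta) \in \mathcal{D}^\dagger_\mathrm{K}$, where $\mathcal{F}^\circ = (S^\circ, \lessdot)$ is the doubled tree. We use the description of $\delta$-formulas given earlier: $\mathcal{M}, w \vDash \delta_{\vec\mu}$ iff there is an $R$-path $w = v_0 \mathrel{R} v_1 \dots \mathrel{R} v_m$ with $v_i \in \vartheta(\mu_i)$ and $v_m \in \vartheta(p)$. Here $R$ is $\lessdot$ restricted to $S^\circ$, so $v \mathrel{R} u$ means that $v$ is obtained from $u$ by deleting the last letter. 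Thus $R$ is the child-to-parent relation, and every world has at most one $R$-successor. Two facts will do all the work: determinism of $R$, and the restriction $|\vartheta(p)| \le 1$.

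\textbf{Case $m \neq n$.} Suppose $\delta_{\vec\mu} \wedge \delta_{\vec\nu}$ holds at $w$. Then there are paths of lengths $m$ and $n$ from $w$ ending in $p$-worlds $u$ and $u'$. Since $|\vartheta(p)| \le 1$, we get $u = u'$. In the tree skeleton, a path of length $k$ from $w$ reaches the sequence obtained from $w$ by deleting $k$ letters, so its endpoint has length $|w| - k$. Hence $|w| - m = |u| = |w| - n$, so $m = n$, a contradiction. Therefore $\delta_{\vec\mu} \wedge \delta_{\vec\nu}$ is false everywhere in $\mathcal{M}$. We do not even need $|\vartheta(p)| \le 1$ for the depth count: the endpoint of a length-$k$ path from $w$ is uniquely determined by $k$, namely as the $k$-fold parent of $w$. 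Two paths of different lengths therefore end at different worlds, and $|\vartheta(p)| \le 1$ is what forbids both endpoints from lying in $p$.

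\textbf{Case $m = n$.} The implication from right to left, $\delta_{\vec\mu\wedge\vec\nu} \to \delta_{\vec\mu} \wedge \delta_{\vec\nu}$, is valid in $\mathrm{K}$ by monotonicity of $\Dmd$. For the other direction, suppose both conjuncts hold at $w$, witnessed by paths $v_0 \dots v_m$ and $v'_0 \dots v'_m$. Since $R$ is functional (each node has a unique parent), induction on $i$ gives $v_i = v'_i$ for all $i \le m$. Hence $v_i \in \vartheta(\mu_i \wedge \nu_i)$ and $v_m \in \vartheta(p)$, which witnesses $\delta_{\vec\mu\wedge\vec\nu}$ at $w$. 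Alternatively, we can argue from the shared endpoint, using $|\vartheta(p)| \le 1$, together with the uniqueness of paths in a tree. Either argument works, and it is safest to present the functionality argument since it is immediate.

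The only point needing care is orientation: in $(S, \lessdot)$ the relation points from a longer sequence to its one-shorter prefix. So the paths are unique ancestor chains, and no branching issue arises. With this in place there is no real obstacle. I would remark that the doubling by $B$ plays no role here; it is needed later for the $\Box$ step.
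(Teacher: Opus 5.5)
Your case $m=n$ has a genuine gap, and it comes from the orientation of $\lessdot$. Read literally, the displayed definition makes $R$ point from a child to its parent, but the paper cannot mean that. The tree-skeleton condition would then force closure under one-letter extensions, so no finite tree skeletons would exist in $\omega^*$. Frames in which each world has at most one successor validate $\Dmd p\to\Box p$, so their logic would not be $\mathrm{K}$. And the $\Box$ step (Lemma~\ref{K-GL-b}) uses $v\in R(w)\Rightarrow\iota v\in R(w)$. Accordingly, the paper's own proof relies on every world having at most one \emph{predecessor}.

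In the intended frames $(S^\circ,\lessdot)$, $R$ goes from a node to its children and is not functional. Any world with a child $v$ also has the distinct child $\iota v$; this is exactly what the doubling by $B$ provides. Hence your step ``since $R$ is functional, induction on $i$ gives $v_i=v'_i$'' fails already at $i=1$, where the two witnessing paths may pass through $v$ and $\iota v$.

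A telltale sign is that your main argument for $m=n$ never uses $|\vartheta(p)|\le 1$. Since $\mathrm{Log}\,\mathcal{D}_\mathrm{K}=\mathrm{K}$ and $\Dmd(r\wedge p)\wedge\Dmd(\neg r\wedge p)$ is $\mathrm{K}$-consistent, this formula is satisfied in some model over these frames. By contrast, $\Dmd(r\wedge\neg r\wedge p)$ is never satisfied. So any correct proof must use that restriction.

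The argument you mention in passing and then set aside is the right one, and it is the paper's:
\begin{itemize}
\item By $|\vartheta(p)|\le 1$, the two paths end at the same world $u_m=v_n$.
\item Comparing lengths, $|w|+m=|u_m|=|w|+n$, so $m=n$.
\item Each world has at most one predecessor (its parent), so backward induction from the common endpoint gives $u_i=v_i$ for all $i\le n$. Hence $\delta_{\vec\mu\wedge\vec\nu}$ holds at $w$.
\end{itemize}
Your case $m\neq n$ survives once $|w|-k$ is corrected to $|w|+k$. The endpoint of a length-$k$ path is no longer determined by $k$, but its length is, and that is all you need there.
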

  \begin{proof}
    Suppose that $\mathcal{M} = (\mathcal{F}, \vartheta) \in \mathcal{D}^\dagger_\mathrm{K}$, and
    $\delta_{\vec \mu} \wedge \delta_{\vec \nu}$ is true at $w$. Then there are sequences of worlds
    $\langle u_i\rangle_{i \leq m}$ and $\langle v_i\rangle_{i \leq n}$ such that $u_0 = v_0 = w$,
    $u_0 \lessdot u_1 \lessdot \dots \lessdot u_m$, $v_0 \lessdot v_1 \lessdot \dots \lessdot v_n$,
    $u_i \in \vartheta(\mu_i)$, $v_i \in \vartheta(\nu_i)$,
    $u_m \in \vartheta(p)$, and $v_n \in \vartheta(p)$. Since $|\vartheta(p)| \leq 1$, $u_m = v_n$.
    Moreover, since every world in $\mathcal{F}$ has at most one predecessor, 
    $m = n$ and $u_i = v_i$ for all $i \leq n$, whence
    $\mathcal{M}, w \vDash \delta_{\vec \mu \wedge \vec \nu}$. 
    The remaining implications are trivial.
  \end{proof}
  For $\mathcal{D}^\dagger_{\rm GL}$, we need a more involved argument:
  \begin{lemma}
    \label{l:dd} For all $\vec \mu = \langle\mu_i\rangle_{i \leq m} \in ({\rm Fm}(\vec r))^{m+1}$ and
    $\vec \nu = \langle\nu_i\rangle_{i \leq n} \in ({\rm Fm}(\vec r))^{n+1}$,
    \begin{gather*}
        \mathcal{D}^\dagger_\mathrm{GL} \vDash \Dmd\delta_{\vec \mu} \wedge \Dmd\delta_{\vec \nu}
        \leftrightarrow \Dmd(\delta_{\vec \mu} \wedge \delta_{\vec \nu}) \vee
        \Dmd(\delta_{\vec \mu} \wedge \Dmd\delta_{\vec \nu}) \vee \Dmd(\Dmd\delta_{\vec \mu} \wedge
        \delta_{\vec \nu}).
    \end{gather*}
  \end{lemma}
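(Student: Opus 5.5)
The right-to-left implication needs nothing special about $\mathcal{D}^\dagger_\mathrm{GL}$: it is valid on every transitive frame, so I treat it first and briefly. If $w$ sees some $v$ with $v\vDash\delta_{\vec\mu}\wedge\delta_{\vec\nu}$, then $w\vDash\Dmd\delta_{\vec\mu}\wedge\Dmd\delta_{\vec\nu}$ directly. If $v\vDash\delta_{\vec\mu}\wedge\Dmd\delta_{\vec\nu}$, then $v\vDash\Dmd\delta_{\vec\nu}$, and transitivity gives $w\vDash\Dmd\delta_{\vec\nu}$. The third disjunct is symmetric.

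For the left-to-right implication, fix $\mathcal{M}=(\mathcal{F}^\circ,\vartheta)\in\mathcal{D}^\dagger_\mathrm{GL}$ with $\mathcal{F}^\circ=(S^\circ,<)$, and a world $w$ with $w\vDash\Dmd\delta_{\vec\mu}\wedge\Dmd\delta_{\vec\nu}$.

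1. From the truth condition for $\delta$-formulas stated before the lemma, I obtain chains $w<u_0<u_1<\dots<u_m$ and $w<v_0<\dots<v_n$ with $u_i\in\vartheta(\mu_i)$, $v_i\in\vartheta(\nu_i)$, $u_m\in\vartheta(p)$ and $v_n\in\vartheta(p)$.
2. Since $|\vartheta(p)|\le1$, we get $u_m=v_n$. This is the only place where $\mathcal{D}^\dagger$ is used.
3. In $(S^\circ,<)$ the predecessors of a node form a chain linearly ordered by $<$ (they are the proper prefixes of that node). Because $u_0$ and $v_0$ are both predecessors of the common endpoint $u_m=v_n$, they are comparable.
4. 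I split into the three cases $u_0=v_0$, $u_0<v_0$ and $v_0<u_0$.
   If $u_0=v_0$, then $u_0\vDash\delta_{\vec\mu}\wedge\delta_{\vec\nu}$, so $w\vDash\Dmd(\delta_{\vec\mu}\wedge\delta_{\vec\nu})$.
   If $u_0<v_0$, then $u_0\vDash\Dmd\delta_{\vec\nu}$ because $v_0\vDash\delta_{\vec\nu}$. Together with $u_0\vDash\delta_{\vec\mu}$ and $w<u_0$, this gives $w\vDash\Dmd(\delta_{\vec\mu}\wedge\Dmd\delta_{\vec\nu})$.
   The case $v_0<u_0$ is symmetric and yields the third disjunct.

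The only point needing care is the comparability claim in step 3. Here I will use that $<$ on $(\Sigma\times B)^*$ is the transitive closure of the one-step extension relation $\lessdot$, so the set $\{x\mid x<z\}$ consists exactly of the proper prefixes of $z$ and is linearly ordered. The key fact is that the tree skeleton $S^\circ$ is downward closed, so these prefixes all lie in $S^\circ$, and the restriction of $<$ to $S^\circ$ preserves this linearity.

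Unlike in Lemma~\ref{K-c}, the chain lengths $m$ and $n$ need not match. The reason is that $<$ is transitive: the two chains can reach the common endpoint along different paths while passing through the same branch. This is why the equivalence has three disjuncts rather than a single $\delta$-formula. No use of the doubling $\iota$ is needed here.
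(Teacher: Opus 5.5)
Your proof is correct and follows the paper's argument: the unique $p$-world is a common upper bound of the witnesses $u_0$ and $v_0$, so they are comparable in the tree order, and the three comparability cases give the three disjuncts. Your justification of the right-to-left direction via transitivity is more accurate than the paper's remark that this direction is derivable in $\mathrm{K}$. That direction needs $\Dmd\Dmd\delta_{\vec\nu}\to\Dmd\delta_{\vec\nu}$, so $\mathrm{K4}$ is what is actually required; this is harmless here, since all frames in $\mathcal{D}_\mathrm{GL}$ are transitive.
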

  \begin{proof}
      The right-to-left implication is derivable in $\mathrm{K}$. 
      Suppose that $\Dmd\delta_{\vec \mu} \wedge \Dmd\delta_{\vec \nu}$ is true at $w$
      in a model $\mathcal{M} = (S^\circ, <, \vartheta) \in \mathcal{D}^\dagger_\mathrm{GL}$. 
      Then there are
      $u, v \in S^\circ$ such that $w < u$, $w < v$, $\mathcal{M}, u \vDash \delta_{\vec \mu}$, and
      $\mathcal{M}, v \vDash \delta_{\vec \nu}$. Since $|\vartheta(p)| \leq 1$, there is $x \in S^\circ$
      such that $u \leq x$, $v \leq x$, and $\vartheta(p) = \{x\}$. 
      It is easy to see that in this case $u$ and $v$ are
      comparable, i.e., $u = v$, $u < v$, or $v < u$. 
      Thus, at least one of the disjuncts of the right-hand side of
      the equivalence is true at $w$.
  \end{proof}
  \begin{lemma}
    \label{GL-c} For each $\alpha, \beta \in \Delta$, there is a set
    $\Delta_0 \subset_{\rm fin} \Delta$ such that
    $\mathcal{D}^\dagger_\mathrm{GL} \vDash \alpha \wedge \beta \leftrightarrow \bigvee\Delta_0$.
  \end{lemma}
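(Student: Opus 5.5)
We prove the statement by induction on $m+n$, where $\alpha=\delta_{\vec\mu}$ with $\vec\mu=\langle\mu_i\rangle_{i\le m}$ and $\beta=\delta_{\vec\nu}$ with $\vec\nu=\langle\nu_i\rangle_{i\le n}$. Throughout, we use that $\sim_{\mathcal{D}^\dagger_{\rm GL}}$ is a congruence on $\mathfrak{Fm}^+(p,\vec r)$, so we may replace subformulas by $\mathcal{D}^\dagger_{\rm GL}$-equivalent ones.

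\textbf{Base cases.} If $m=0$, then $\alpha=\mu_0\wedge p$. Since $|\vartheta(p)|\le 1$ and $<$ is irreflexive in $(S^\circ,<)$, a world satisfying $p$ has no successor satisfying $p$. Hence for $n>0$,
\[
  (\mu_0\wedge p)\wedge\nu_0\wedge\Dmd(\cdots)\leftrightarrow\bot
\]
holds in $\mathcal{D}^\dagger_{\rm GL}$, and we take $\Delta_0=\emptyset$. For $n=0$, we have $\alpha\wedge\beta\sim(\mu_0\wedge\nu_0)\wedge p$, which is itself a $\delta$-formula. The case $n=0$ with $m>0$ is symmetric.

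\textbf{Inductive step} ($m,n\ge 1$). Write $\alpha=\mu_0\wedge\Dmd\alpha'$ and $\beta=\nu_0\wedge\Dmd\beta'$, where $\alpha'=\delta_{\langle\mu_1,\dots,\mu_m\rangle}$ and $\beta'=\delta_{\langle\nu_1,\dots,\nu_n\rangle}$. Then
\[
  \alpha\wedge\beta=(\mu_0\wedge\nu_0)\wedge\Dmd\alpha'\wedge\Dmd\beta'.
\]
By Lemma~\ref{l:dd}, this is $\mathcal{D}^\dagger_{\rm GL}$-equivalent to
\[
  (\mu_0\wedge\nu_0)\wedge\bigl(\Dmd(\alpha'\wedge\beta')\vee\Dmd(\alpha'\wedge\Dmd\beta')\vee\Dmd(\Dmd\alpha'\wedge\beta')\bigr).
\]
Now $\Dmd\beta'$ is the $\delta$-formula $\delta_{\langle\top,\nu_1,\dots,\nu_n\rangle}$, of length $n+1$. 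So the pair $(\alpha',\Dmd\beta')$ has total length $(m-1)+n<m+n$. Likewise, $(\alpha',\beta')$ has total length $m+n-2$ and $(\Dmd\alpha',\beta')$ has total length $m+n-1$. Since the combined length strictly decreases in each case, the induction is well-founded.

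By the induction hypothesis, each of the three conjunctions is $\mathcal{D}^\dagger_{\rm GL}$-equivalent to $\bigvee\Delta_i$ for some finite $\Delta_i\subset\Delta$. Using additivity of $\Dmd$ (valid in $\mathrm K$) and the congruence property, the whole formula becomes
\[
  \bigvee_{\gamma\in\Delta_0\cup\Delta_1\cup\Delta_2}(\mu_0\wedge\nu_0)\wedge\Dmd\gamma.
\]
Each disjunct is again a $\delta$-formula: if $\gamma=\delta_{\vec\kappa}$, then $(\mu_0\wedge\nu_0)\wedge\Dmd\gamma=\delta_{\langle\mu_0\wedge\nu_0\rangle\vec\kappa}$.

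\textbf{Main obstacle.} The only delicate point is choosing the right induction measure, since the $\Dmd\beta'$ term lengthens one of the two $\delta$-formulas. Taking the sum $m+n$ of the indices handles this, because stripping the leading $\Dmd$ always reduces the other formula by one.
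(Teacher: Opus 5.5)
Your proof is correct and is essentially the paper's argument. Both use induction on $m+n$, dispose of the base case by irreflexivity together with $|\vartheta(p)|\le 1$, and handle the inductive step via Lemma~\ref{l:dd}, the rewriting $\Dmd\delta_{\bar\nu}\sim_{\mathrm K}\delta_{\langle\top,\nu_1,\dots,\nu_n\rangle}$, and prefixing $\mu_0\wedge\nu_0$. Two small points: the paper states explicitly, as $\mathrm{K4}\vdash\Dmd\delta_{\bar\nu}\to\Dmd p$, the transitivity step you use silently when calling the end of the chain a successor; and reusing the name $\Delta_0$ for an intermediate set clashes (cosmetically) with the set the lemma asks for.
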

  \begin{proof}
    Let
    $\vec \mu = \langle\mu_i\rangle_{i \leq m} \in ({\rm Fm}(\vec r))^{m+1}$ and
    $\vec \nu = \langle\nu_i\rangle_{i \leq n} \in ({\rm Fm}(\vec r))^{n+1}$. We need to construct a set
    $\Delta_0 \subset_{\rm fin} \Delta$ such that $\mathcal{D}^\dagger_\mathrm{GL} \vDash \delta_{\vec
    \mu} \wedge \delta_{\vec \nu} \leftrightarrow \bigvee\Delta_0$. We proceed by induction on $m + n$.
    Without loss of generality, we may assume that $m \leq n$. The case $m = n = 0$ is trivial.
    
    If $m = 0$ and $n > 0$, then $\delta_{\vec \mu} = \mu_0 \wedge p$ and
    $\delta_{\vec \nu} = \nu_0 \wedge \Dmd\delta_{\bar \nu}$, where
    $\bar \nu \defeq \langle\nu_1, \dots, \nu_n\rangle$. It is easy to see that
    $\mathrm{K4} \vdash \Dmd\delta_{\bar \nu} \to \Dmd p$, whence
    $\mathrm{K4} \vdash \delta_{\vec \mu} \wedge \delta_{\vec \nu} \to p \wedge \Dmd p$. At the same
    time, since $\mathrm{GL}$-frames are irreflexive and $p$ holds in at most one world in any model from
    $\mathcal{D}^\dagger_\mathrm{GL}$, we have $p \wedge \Dmd p \sim_{\mathcal{D}^\dagger_\mathrm{GL}} \bot$.
    Thus, $\delta_{\vec \mu} \wedge \delta_{\vec \nu} \sim_{\mathcal{D}^\dagger_\mathrm{GL}}
    \bigvee\emptyset$.
    
    If $n, m > 0$, then $\delta_{\vec \mu} = \mu_0 \wedge \Dmd\delta_{\bar \mu}$ and
    $\delta_{\vec \nu} = \nu_0 \wedge \Dmd\delta_{\bar \nu}$, where
    $\bar \mu \defeq \langle\mu_1, \dots, \mu_m\rangle$ and
    $\bar \nu \defeq \langle\nu_1, \dots, \nu_n\rangle$. By Lemma~\ref{l:dd},
    \begin{gather}
      \mathcal{D}^\dagger_\mathrm{GL} \vDash \Dmd\delta_{\bar \mu} \wedge \Dmd\delta_{\bar \nu}
      \leftrightarrow \Dmd(\delta_{\bar \mu} \wedge \delta_{\bar \nu}) \vee
      \Dmd(\delta_{\bar \mu} \wedge \Dmd\delta_{\bar \nu}) \vee \Dmd(\Dmd\delta_{\bar \mu} \wedge
      \delta_{\bar \nu}). \label{eq:1}
    \end{gather}
    Notice that $\Dmd\delta_{\bar \mu} \sim_\mathrm{K} \delta_{\langle\top,\mu_1,\dots,\mu_m\rangle}$.
    Therefore, by the induction hypothesis, there are
    $\Delta_1, \Delta_2, \Delta_3 \subset_{\rm fin} \Delta$ such that
    \begin{gather}
      \delta_{\bar \mu} \wedge \delta_{\bar \nu} \sim_{\mathcal{D}^\dagger_\mathrm{GL}}
      \bigvee\Delta_1,\quad
      \delta_{\bar \mu} \wedge \Dmd\delta_{\bar \nu} \sim_{\mathcal{D}^\dagger_\mathrm{GL}}
      \bigvee\Delta_2,\quad\text{and}\quad \Dmd\delta_{\bar \mu} \wedge \delta_{\bar \nu}
      \sim_{\mathcal{D}^\dagger_\mathrm{GL}} \bigvee\Delta_3. \label{eq:2}
    \end{gather}
    Combining (\ref{eq:1}) and (\ref{eq:2}), we obtain
    \begin{gather*}
      \mathcal{D}^\dagger_\mathrm{GL} \vDash \Dmd\delta_{\bar \mu} \wedge \Dmd\delta_{\bar \nu}
      \leftrightarrow \Dmd\bigvee\Delta_1 \vee \Dmd\bigvee\Delta_2 \vee \Dmd\bigvee\Delta_3.
    \end{gather*}
    Thus, for $\Delta_0 \defeq \{\delta_{\langle\mu_0 \wedge \nu_0, \varkappa_0, \dots,
    \varkappa_k\rangle} \mid \delta_{\vec \varkappa} \in \Delta_1 \cup \Delta_2 \cup \Delta_3\}$, we
    have
    \begin{align*}
      \delta_{\vec \mu} \wedge \delta_{\vec \nu}
      \;&=\; (\mu_0 \wedge \Dmd\delta_{\bar \mu}) \wedge (\nu_0 \wedge \Dmd\delta_{\bar \nu})\\
      &\sim_{\mathcal{D}^\dagger_\mathrm{GL}} \mu_0 \wedge \nu_0 \wedge \Bigl(\Dmd\bigvee\Delta_1 \vee
      \Dmd\bigvee\Delta_2 \vee \Dmd\bigvee\Delta_3\Bigr)\\
      &\sim_\mathrm{K}\; \bigvee\Delta_0.
    \end{align*}
  \end{proof}
  \begin{lemma}
    \label{K-GL-b} Let $\varphi$ be a $p$-positive formula. 
    Then $\Box\varphi \leftrightarrow \Box\varphi_\bot$
    is true in both $\mathcal{D}^\dagger_\mathrm{K}$
    and $\mathcal{D}^\dagger_\mathrm{GL}$.
  \end{lemma}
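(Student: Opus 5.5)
One direction is immediate: $\varphi$ is $p$-positive and hence monotone, so $\varphi_\bot\to\varphi$ is valid in every Kripke model. Therefore $\Box\varphi_\bot\to\Box\varphi$ holds everywhere, and in particular in $\mathcal{D}^\dagger_\mathrm{K}$ and $\mathcal{D}^\dagger_\mathrm{GL}$. The real content is the converse, $\Box\varphi\to\Box\varphi_\bot$, and for it I would use the twin map $\iota$.

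Fix $\mathcal{M}=(\mathcal{F}^\circ,\vartheta)\in\mathcal{D}^\dagger_\Lambda$ with $\Lambda\in\{\mathrm{K},\mathrm{GL}\}$ and a world $w$ with $\mathcal{M},w\vDash\Box\varphi$. If $\vartheta(p)=\emptyset$ there is nothing to prove, since then $\varphi$ and $\varphi_\bot$ have the same truth set. So assume $\vartheta(p)=\{x\}$, and let $u$ be any successor of $w$; we must show $\mathcal{M},u\vDash\varphi_\bot$. Consider $\iota u$. In both $S^\circ$ with $\lessdot$ and $S^\circ$ with $<$, the worlds $u$ and $\iota u$ have the same predecessors, because they share the same proper prefixes. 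Hence $w$ sees $\iota u$ as well, and both $u$ and $\iota u$ satisfy $\varphi$.

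The key step is to choose one of $u,\iota u$ whose generated submodel avoids $x$. Since $u\neq\iota u$ and neither is a prefix of the other, their cones of descendants (each including the world itself) are disjoint. Hence at most one of the two cones contains $x$; call the other world $u'$, so that $u'\in\{u,\iota u\}$ and $u'\not\le x$. In the subframe generated by $u'$, $p$ is false everywhere. Truth is preserved under generated submodels, so $\mathcal{M},u'\vDash\varphi$ gives $\mathcal{M},u'\vDash\varphi_\bot$.

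It remains to transfer $\varphi_\bot$ from $u'$ back to $u$. The formula $\varphi_\bot\in{\rm Fm}(\vec r)$ involves only parameters, and these are valuated by sets in $\pi^{-1}\mathcal{P}(W)$. Moreover $\pi$ is a p-morphism with $\pi u=\pi\iota u$. Therefore every formula in ${\rm Fm}(\vec r)$ has the same truth value at $u$ and at $\iota u$, exactly as in the proof of Lemma~\ref{S5-b}. Hence $\mathcal{M},u\vDash\varphi_\bot$, and so $\mathcal{M},w\vDash\Box\varphi_\bot$.

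The main point to verify carefully is the disjointness of the cones of $u$ and $\iota u$, and that this works for both $\lessdot$ and $<$. This holds because $\iota$ changes only the last coordinate, so no extension of $u$ is an extension of $\iota u$.
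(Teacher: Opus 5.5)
Your proof is correct and follows essentially the same argument as the paper's. Both use the twin successor $\iota u$ and the disjointness of the generated submodels of $u$ and $\iota u$, so that $\varphi\leftrightarrow\varphi_\bot$ holds at one of them; the differences are that the paper argues contrapositively from a successor refuting $\varphi_\bot$, and that you spell out the $\pi$-invariance of parameter-only formulas, which the paper leaves as ``easy to see''.
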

  \begin{proof}
    Since $\varphi$ is $p$-positive,
    $\mathrm{K} \vdash \Box\varphi_\bot \to \Box\varphi$. Suppose that $\mathcal{M} = (S^\circ, R,
    \vartheta) \in \mathcal{D}^\dagger_\mathrm{K} \cup \mathcal{D}^\dagger_\mathrm{GL}$, and
    $\Box\varphi_\bot$ is false at $w$. Then there is a world $v \in R(w)$ such that
    $\mathcal{M}, v \nvDash \varphi_\bot$. It is easy to see from the definition of
    $\mathcal{D}_\mathrm{K}$ and $\mathcal{D}_\mathrm{GL}$ that $\iota v \in R(w)$,
    $\varphi_\bot$ is false at $\iota v$, and the submodels generated by $v$ and $\iota v$
    are disjoint. Since $|\vartheta(p)| \leq 1$, $\varphi \leftrightarrow \varphi_\bot$ holds
    either at $v$ or at $\iota v$. Thus, $\varphi$ is false at that world and
    $\mathcal{M}, w \nvDash \Box\varphi$.
  \end{proof}
  \begin{theorem}
    \label{t:K-GL}
    For $\Lambda \in \{\mathrm{K}, \mathrm{GL}\}$, a formula $\varphi$ is $\Lambda$-additive iff it
    is $\Lambda$-equivalent to a formula from $A_0$.
  \end{theorem}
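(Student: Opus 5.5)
We follow the same scheme as for $\mathrm{S5}$. The right-to-left direction is immediate: by the corollary to Lemma~\ref{l:add-closed}, every formula $\bigvee\Delta_0\vee\varkappa$ in $A_0$ is additive in any normal logic. For the left-to-right direction, recall that both $\mathrm{K}$ and $\mathrm{GL}$ have LIP and that $\mathcal{D}_\Lambda$ is a good class of finite general frames for $\Lambda$. So by Corollary~\ref{l:gen-3} it suffices to show the following: every $p$-positive $\varphi\in{\rm Fm}^+(p,\vec r)$ is $\mathcal{D}^\dagger_\Lambda$-equivalent to some formula of $A_0$. We prove this by induction on the construction of $\varphi$.

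For the base case, the atoms $\bot$, $\top$, $r$ and $\neg r$ lie in ${\rm Fm}(\vec r)\subseteq A_0$, since $\Delta_0=\emptyset$ is allowed. The atom $p$ equals $\delta_{\langle\top\rangle}$, so it is in $A_0$ as well. For the induction step, we treat each connective in turn.
\begin{itemize}
\item[$\vee$.] $A_0$ is closed under disjunction up to propositional equivalence, so this case is trivial.
\item[$\Dmd$.] By normality and additivity of $\Dmd$ in $\mathrm{K}$,
\[
\Dmd\Bigl(\bigvee\Delta_0\vee\varkappa\Bigr)\sim_\mathrm{K}\bigvee_{\delta\in\Delta_0}\Dmd\delta\vee\Dmd\varkappa .
\]
Here $\Dmd\varkappa\in{\rm Fm}(\vec r)$, and $\Dmd\delta_{\vec\mu}\sim_\mathrm{K}\delta_{\langle\top\rangle\vec\mu}$ is again a $\delta$-formula.
\item[$\wedge$.] By distributivity, $\bigl(\bigvee\Delta_1\vee\varkappa_1\bigr)\wedge\bigl(\bigvee\Delta_2\vee\varkappa_2\bigr)$ expands into a disjunction of three kinds of conjunct:
\begin{itemize}
\item[(a)] $\varkappa_1\wedge\varkappa_2\in{\rm Fm}(\vec r)$;
\item[(b)] $\varkappa\wedge\delta_{\vec\mu}$, which is $\mathrm{K}$-equivalent to the $\delta$-formula obtained by conjoining $\varkappa$ to $\mu_0$;
\item[(c)] $\delta\wedge\delta'$ with $\delta,\delta'\in\Delta$.
\end{itemize}
Case (c) is handled by Lemma~\ref{K-c} for $\mathrm{K}$, which gives $\bot$ or a single $\delta$-formula, and by Lemma~\ref{GL-c} for $\mathrm{GL}$, which gives a finite disjunction of $\delta$-formulas.
\item[$\Box$.] We use Lemma~\ref{K-GL-b}: for $p$-positive $\psi$, the formula $\Box\psi$ is $\mathcal{D}^\dagger_\Lambda$-equivalent to $\Box\psi_\bot$, and $\Box\psi_\bot\in{\rm Fm}(\vec r)\subseteq A_0$. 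We apply this directly to the original subformula $\psi$, which is $p$-positive; the induction hypothesis is not even needed here.
\end{itemize}

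There is one point where care is needed. In the $\wedge$ and $\Dmd$ steps we replace subformulas by $\mathcal{D}^\dagger_\Lambda$-equivalent ones, which requires $\sim_{\mathcal{D}^\dagger_\Lambda}$ to be a congruence on $p$-positive formulas. The paper notes that it is; to stay safe, we keep every intermediate formula in ${\rm Fm}^+(p,\vec r)$, and formulas of $A_0$ are indeed $p$-positive.

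The only genuinely nontrivial ingredient is the conjunction of two $\delta$-formulas in $\mathrm{GL}$, and that is exactly Lemma~\ref{GL-c}. With the preceding lemmas available, the theorem reduces to assembling these cases. Finally, Corollary~\ref{l:gen-3} upgrades $\mathcal{D}^\dagger_\Lambda$-equivalence to $\Lambda$-equivalence for additive $\varphi$. Since every $\Lambda$-additive $\varphi$ is $\Lambda$-equivalent to a $p$-positive formula by Theorem~\ref{t:lind}, this completes the proof.
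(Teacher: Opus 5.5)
Your proposal is correct and follows essentially the same route as the paper: reduce via Corollary~\ref{l:gen-3} to showing that every $p$-positive formula is $\mathcal{D}^\dagger_\Lambda$-equivalent to a member of $A_0$, then induct on the formula, using Lemmas~\ref{K-c} and~\ref{GL-c} (via distributivity) for $\wedge$ and Lemma~\ref{K-GL-b} for $\Box$. You additionally spell out the base, $\vee$, and $\Dmd$ cases, which the paper dismisses as trivial.
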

  \begin{proof}
    By Lemma~\ref{l:gen-3}, it is sufficient to prove that every p-positive formula $\varphi$ is equivalent
    to a formula from $A_0$ in $\mathcal{D}^\dagger_\Lambda$. We proceed by induction on
    the construction of $\varphi$. The base case and the induction steps for $\vee$ and $\Dmd$ are trivial.
    The induction step for $\wedge$, by distributivity, follows from Lemmas~\ref{K-c} and~\ref{GL-c}. 
    The induction step for $\Box$ follows from Lemma~\ref{K-GL-b}.
  \end{proof}
  \begin{corollary}
    A formula $\varphi$ is normal additive in $\mathrm{K}$ or $\mathrm{GL}$ iff it is equivalent
    to a disjunction of $\delta$-formulas.
  \end{corollary}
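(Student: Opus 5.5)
The corollary follows from Theorem~\ref{t:K-GL} together with the first lemma of Section~\ref{s:gen}, which extracts normal formulas from a complete set of additive ones. The right-to-left direction is already settled: by the corollary to Lemma~\ref{l:add-closed}, every finite disjunction $\bigvee\Delta_0$ of $\delta$-formulas is normal $\Lambda$-additive for any normal logic $\Lambda$, in particular for $\mathrm{K}$ and $\mathrm{GL}$. The empty disjunction $\bot$ is included here.

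For the left-to-right direction, let $\varphi$ be normal $\Lambda$-additive with $\Lambda\in\{\mathrm{K},\mathrm{GL}\}$. Theorem~\ref{t:K-GL} gives $\varphi\sim_\Lambda\bigvee\Delta_0\vee\varkappa$ for some $\Delta_0\subset_{\rm fin}\Delta$ and $\varkappa\in{\rm Fm}(\vec r)$. We then substitute $\bot$ for $p$. Each $\delta$-formula has $p$ occurring under $\varkappa_n\wedge p$, so $\delta_{\vec\varkappa}(\bot)\sim_{\mathrm K}\bot$, using $\Dmd\bot\sim_{\mathrm K}\bot$ repeatedly. Since $\varkappa$ does not contain $p$, we get $\varphi_\bot\sim_\Lambda\varkappa$. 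Normality of $\varphi$ says $\varphi_\bot\sim_\Lambda\bot$, hence $\varkappa\sim_\Lambda\bot$ and therefore $\varphi\sim_\Lambda\bigvee\Delta_0$.

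Equivalently, one can apply the first lemma of Section~\ref{s:gen} to the complete set $A_0$. The normal members of $A_0$ are exactly those with $\varkappa\sim_\Lambda\bot$, and these are equivalent to disjunctions of $\delta$-formulas. The only point that needs care is that substitution preserves $\Lambda$-equivalence, so that $\varphi\sim_\Lambda\alpha$ implies $\varphi_\bot\sim_\Lambda\alpha_\bot$; this holds because $\Lambda$ is closed under substitution. No step poses a real obstacle.
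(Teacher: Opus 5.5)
Your proof is correct and follows the paper's intended route. The paper states the corollary without a separate argument, as an immediate consequence of Theorem~\ref{t:K-GL} and the first lemma of Section~\ref{s:gen}. For $\alpha=\bigvee\Delta_0\vee\varkappa\in A_0$ one has $\alpha_\bot\sim_{\mathrm K}\varkappa$, so normality forces $\varkappa\sim_\Lambda\bot$, and the converse direction is the corollary to Lemma~\ref{l:add-closed}.
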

  \begin{remark}
      \label{r:constr}
      Our proof of Theorem~\ref{t:K-GL}, in contrast to the model-theoretic arguments of~\cite{Benth98,Benth26}, is constructive.
      In fact, we have the following: there exists an algorithm that, given a ${\rm K}$-additive formula $\varphi$,
      computes a formula $\alpha \in A_0$ such that $\varphi\sim_{\rm K}\alpha$.
      This algorithm proceeds in two steps:
      \begin{enumerate}
          \item Given a $\rm K$-additive (and hence monotone in $K$) formula $\varphi$,
          compute a $p$-positive formula $\eta$ such that $\varphi\sim_{\rm K}\eta$.
          \item Given a positive $\rm K$-additive formula $\eta$, compute a formula $\alpha\in A_0$ 
          such that $\eta\sim_{\rm K}\alpha$.
      \end{enumerate}
      The algorithm for the first step follows from the proof of~\cite[Theorem~4.4]{Dvo26} 
      together with the fact that Lyndon interpolants for given formulas can be effectively constructed in ${\rm K}$ 
      (see, e.g.,~\cite[Section 4]{BCI25}).
      The algorithm for the second step can be extracted from the proof of Theorem~\ref{t:K-GL}.
  \end{remark}

\section{Additive formulas in S4 and Grz}
    \label{s:S4-Grz}
  Turning to the description of additive formulas in $\mathrm{S4}$ and $\mathrm{Grz}$, we face some
  difficulties. In these logics, there are additive formulas which 
  are not equivalent to any formula from $A_0$: by Proposition~\ref{p:s4-dbd}, 
  the formula $\alpha=\Diamond\Box\Diamond p$ is $\rm S4$-additive but not completely additive 
  in the class of all $\rm S4$-frames and formulas from $A_0$ are clearly completely additive.
  Moreover, it is easy to see that $A_0$ contains only four parameter-free formulas up to $\rm S4$-equivalence:
  $\bot$, $p$, $\Dmd p$, and $\top$. 
  Clearly, neither $\alpha$ nor $\alpha\lor p$ is equivalent to any of those 
  four formulas even in $\rm Grz$ (though $\alpha\sim_{\rm Grz}\Diamond\Box p$ and $\alpha$ is completely additive in all ${\rm Grz}$-frames).
  
  In this section, we characterize parameter-free additive formulas in $\mathrm{S4}$ and
  $\mathrm{Grz}$. As before, it is sufficient to consider only positive formulas. In fact, there are
  only 37 formulas in ${\rm Fm}^+(p)$ up to $\mathrm{S4}$-equivalence~\cite[Theorem~5.1]{Mor19}, so in principle, the
  problem could be solved by exhaustive search. However, we present a more conceptual proof, which
  we hope can be extended to formulas with parameters. The overall strategy is similar to that used
  in the previous sections, although some of the steps differ:
  \begin{enumerate}
    \item Define a class of frames $\mathcal{D}$ that is good for $\Lambda$.
    \item Describe the modal lattice $\mathfrak{Fm}^+(p)/{\sim}_{\mathcal{D}^\dagger}$.
    \item For each equivalence class in ${\rm Fm}^+(p)/{\sim}_{\mathcal{D}^\dagger}$,
          either exhibit a $\Lambda$-additive representative or prove that there is no 
          $\Lambda$-additive formula in it.
  \end{enumerate}
  Then, by Lemma~\ref{l:gen-2}, the $\Lambda$-additive formulas from the third step form a complete set of 
  parameter-free $\Lambda$-additive formulas.
  To simplify the second step of our plan, we will use the following observation:
  \begin{lemma}
    \label{S4-lat} 
    Suppose that $\Lambda\supseteq\mathrm{S4}$, $\mathcal{D}$ is good for $\Lambda$.
    Then, for all elements $x$ and $y$ of $\mathfrak{Fm}^+(p)/{\sim}_{\mathcal{D}^\dagger}$,
    \begin{itemize}
      \item if $x \leq y \leq \Dmd x$, then $\Dmd y = \Dmd x$;
      \item if $\Box x \leq y \leq x$, then $\Box y = \Box x$.
    \end{itemize}
  \end{lemma}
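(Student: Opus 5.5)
The plan is to check that the quotient $\mathfrak{Fm}^+(p)/{\sim}_{\mathcal{D}^\dagger}$ is a modal lattice whose operators $\Dmd$ and $\Box$ satisfy the $\mathrm{S4}$ laws: monotonicity, $x\le\Dmd x$, $\Dmd\Dmd x=\Dmd x$, and dually $\Box x\le x$, $\Box\Box x=\Box x$. Once these are available, both items follow from a purely order-theoretic computation.

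The first step transfers the $\mathrm{S4}$ laws to the quotient. Since $\mathcal{D}$ is good for $\Lambda$, every frame $\mathcal{F}$ underlying some $(\mathcal{F},P)\in\mathcal{D}$ is a $\Lambda$-frame. Because $\Lambda\supseteq\mathrm{S4}$, $\mathcal{F}$ is reflexive and transitive. Every model in $\mathcal{D}^\dagger$ is based on such a frame, so every $\mathrm{S4}$-theorem holds in every model of $\mathcal{D}^\dagger$. In particular, for $p$-positive $\varphi,\psi$, the formulas
\[
\varphi\to\Dmd\varphi,\quad \Dmd\Dmd\varphi\leftrightarrow\Dmd\varphi,\quad \Box\varphi\to\varphi,\quad \Box\Box\varphi\leftrightarrow\Box\varphi
\]
are all true in $\mathcal{D}^\dagger$. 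Monotonicity also transfers: if $\mathcal{D}^\dagger\vDash\varphi\to\psi$, then $\mathcal{D}^\dagger\vDash\Dmd\varphi\to\Dmd\psi$. This holds because truth of $\Dmd\varphi$ at a world depends only on the truth set of $\varphi$ in the same model, and that model is fixed. The same argument gives monotonicity for $\Box$. Here the order $x\le y$ on classes means that $\mathcal{D}^\dagger\vDash\varphi\to\psi$; this agrees with the lattice order, since $\sim_{\mathcal{D}^\dagger}$ is a congruence and $\varphi\land\psi\sim\varphi$ iff the implication holds in all models of $\mathcal{D}^\dagger$.

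The second step is the calculation. For the first item, assume $x\le y\le\Dmd x$. Monotonicity applied to $x\le y$ gives $\Dmd x\le\Dmd y$. Applied to $y\le\Dmd x$, it gives $\Dmd y\le\Dmd\Dmd x=\Dmd x$. Hence $\Dmd y=\Dmd x$. The second item is the dual argument. From $\Box x\le y$ we get $\Box x=\Box\Box x\le\Box y$, and from $y\le x$ we get $\Box y\le\Box x$.

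I do not expect a real obstacle here. The only point needing care is the remark that the validity class is a class of Kripke models rather than frames. Validity of $\mathrm{S4}$-theorems in those models, and the monotonicity of $\Dmd$ and $\Box$ with respect to model-wise implication, both follow directly because each model sits on an $\mathrm{S4}$-frame.
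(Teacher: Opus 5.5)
Your proof is correct and follows the paper's argument: you transfer monotonicity and idempotence of $\Dmd$ (dually, of $\Box$) to the quotient, using that every model in $\mathcal{D}^\dagger$ sits on an $\mathrm{S4}$-frame, and then conclude from $\Dmd x\le\Dmd y\le\Dmd\Dmd x=\Dmd x$. The differences are cosmetic: you derive monotonicity model by model rather than from the theorem $\Dmd p\to\Dmd(p\lor q)$, and the laws $x\le\Dmd x$ and $\Box x\le x$ that you list are not actually needed.
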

  \begin{proof}
    Since $\Dmd p\to\Dmd(p\lor q)$ and $\Dmd\Dmd p\leftrightarrow\Dmd p$
    are derivable in $\mathrm{S4}$, they are true in $\mathcal{D}^\dagger$,
    whence the operator $\Dmd$ in $\mathfrak{Fm}^+(p)/{\sim}_{\mathcal{D}^\dagger}$
    is monotone and idempotent. 
    Therefore, $\Dmd x \leq \Dmd y \leq \Dmd\Dmd x = \Dmd x$
    whenever $x\leq y\leq\Dmd x$. The second claim is similar.
  \end{proof}
  \subsection{Additive formulas in Grz}
    It is well known that $\mathrm{Grz}$ is the logic of the following class of frames:
    \begin{gather*}
      \mathcal{C}_\mathrm{Grz} \defeq \{ (S, \leq) \mid S\subseteq\omega^* \text{ is a finite tree skeleton}\},
    \end{gather*}
    where $\leq$ is the reflexive closure of $<$. For
    $\mathcal{F} = (S, \leq) \in \mathcal{C}_\mathrm{Grz}$, we put
    $\mathcal{F}^\circ \defeq (S^\circ, \leq)$. Clearly, $\mathcal{F}^\circ \vDash \mathrm{Grz}$, and
    $\pi : \mathcal{F}^\circ \twoheadrightarrow \mathcal{F}$ is a p-morphism.
    By Lemma~\ref{l:gen-D}, the class of general frames $\mathcal{D}_\mathrm{Grz} \defeq \pi^{-1}\mathcal{C}_\mathrm{Grz}$ is good for
    $\mathrm{Grz}$.
    \begin{figure}
        \centering
        \begin{tikzpicture}[
  ord/.style={dashed},
  dx/.style={shift={(#1,0)}},
  dx/.value required,
  dy/.style={shift={(0,#1)}},
  dy/.value required,
  above/.style={dy=1},
  left/.style={dx=-1.5},
  right/.style={dx=1.5},
]


\newcommand{\C}[1]{[#1]}

\node (1) at (0,0) {$\C{\bot}$};
\node (3b) at ([above]1) {$\C{\Box p}$};
\node (4a) at ([left,above]3b) {$\C{p}$};
\node (4b) at ([right,above]3b) {$\C{\Dmd\Box p}$};
\node (5) at ([left,above]4b) {$\C{\Dmd\Box p\lor p}$};
\node (6) at ([above]5) {$\C{\Dmd p}$};
\node (7) at ([above]6) {$\C{\top}$};

\draw[ord] (1) -- (3b) -- (4a) -- (5) -- (6) -- (7);
\draw[ord] (3b) -- (4b) -- (5);

\draw[loop left, looseness=5,->] (1) to node[midway,dx=-.2] {$\Dmd$} (1);
\draw[loop right, looseness=5,->] (1) to node[midway,dx=.2] {$\Box$} (1);
\draw[loop left, looseness=5,->] (7) to node[midway,dx=-.2] {$\Dmd$} (7);
\draw[loop right, looseness=5,->] (7) to node[midway,dx=.2] {$\Box$} (7);

\draw[loop right,looseness=2.5,->,in=0,out=0] (6) to node[dy=-.8]{$\Box$} (3b);
\draw[bend left,looseness=1,->] (4a) to node[label=$\Dmd$]{} (6);
\draw[bend left,looseness=1,->] (3b) to node[label=$\Dmd$]{} (4b);

\end{tikzpicture} 
        \caption{The lattice $\mathfrak{Fm}^+(p)/{\sim}_{\mathcal{D}^\dagger_\mathrm{Grz}}$} 
        \label{fig:Grz}
    \end{figure}
    \begin{proposition}
      The modal lattice $\mathfrak{Fm}^+(p)/{\sim}_{\mathcal{D}^\dagger_\mathrm{Grz}}$ consists of the
      equivalence classes of the formulas from the set
      \begin{gather*}
        \Psi_\mathrm{Grz} \defeq \{ \bot,\; p,\; \top,\; \Dmd p,\; \Box p,\; \Dmd\Box p,\; \Dmd\Box p \vee p \},
      \end{gather*}
      where operations are shown in Figure~\ref{fig:Grz} (missing arrows for $\Dmd$ and $\Box$
      can be restored by Lemma~\ref{S4-lat}).
    \end{proposition}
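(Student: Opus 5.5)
We prove the proposition by a closure argument. Let $Q$ be the set of $\sim_{\mathcal{D}^\dagger_\mathrm{Grz}}$-classes of the seven formulas in $\Psi_\mathrm{Grz}$. The base formulas $\bot$, $\top$, $p$ lie in $Q$. If we show that $Q$ is closed under $\wedge$, $\vee$, $\Dmd$ and $\Box$, then induction on $p$-positive formulas gives that every class of $\mathfrak{Fm}^+(p)/{\sim}_{\mathcal{D}^\dagger_\mathrm{Grz}}$ lies in $Q$. Separately, we show that the seven classes are pairwise distinct and ordered as in Figure~\ref{fig:Grz}. In a model $\mathcal{M}=(S^\circ,\leq,\vartheta)\in\mathcal{D}^\dagger_\mathrm{Grz}$ there are only two cases: $\vartheta(p)=\emptyset$, or $\vartheta(p)=\{x\}$ for a single world $x$. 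So every formula in $\Psi_\mathrm{Grz}$ has an explicit extension:
\begin{itemize}
  \item $\Box p$ holds exactly at $x$ when $x$ is a leaf, and nowhere otherwise;
  \item $\Dmd p$ holds at ${\downarrow}x$;
  \item $\Dmd\Box p$ holds at ${\downarrow}x$ when $x$ is a leaf, and nowhere otherwise;
  \item $\Dmd\Box p\vee p$ holds at ${\downarrow}x$ if $x$ is a leaf, and at $\{x\}$ otherwise.
\end{itemize}

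\emph{Order and distinctness.} The order in Figure~\ref{fig:Grz} follows directly from these descriptions. For distinctness, take $\vartheta(p)=\{x\}$ in two situations: $x$ a leaf with a predecessor, and $x$ a non-leaf with a predecessor. Such models exist in $\mathcal{D}_\mathrm{Grz}$ (for instance, the skeleton generated by $\langle 0,0\rangle$ and its doubling); note that the parameter-free case imposes no restriction on $P$ beyond the valuation of $p$ being a singleton. Comparing extensions in these two models separates all seven classes. In particular, $p$ and $\Dmd\Box p$ are incomparable: in the leaf case, $\Dmd\Box p$ holds at predecessors where $p$ fails; in the non-leaf case, $p$ holds while $\Dmd\Box p$ is empty.

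\emph{Closure under $\wedge$ and $\vee$.} Since the poset in Figure~\ref{fig:Grz} is a lattice in which $p$ and $\Dmd\Box p$ have join $\Dmd\Box p\vee p$ and meet $\Box p$, it suffices to verify the one nontrivial meet $p\wedge\Dmd\Box p\sim\Box p$. This holds because $\Dmd\Box p$ forces $x$ to be a leaf, and then $p\wedge\Dmd\Box p$ holds exactly at $x$. All other meets and joins are between comparable elements.

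\emph{Closure under $\Dmd$ and $\Box$.} For each element we compute $\Dmd$ and $\Box$ from the explicit descriptions, keeping only the arrows drawn in the figure and filling in the rest with Lemma~\ref{S4-lat}:
\begin{itemize}
  \item $\Dmd p\in Q$;
  \item $\Dmd\Dmd\Box p=\Dmd\Box p$;
  \item $\Dmd(\Dmd\Box p\vee p)=\Dmd p$, since $x$ lies in the set;
  \item $\Dmd\Box p$ is obtained from $\Box p$;
  \item $\Box\Dmd p\sim\Box p$.
\end{itemize}
The last one is the key computation. If $\vartheta(p)=\{x\}$, then $\Box\Dmd p$ holds at $w$ iff every successor of $w$ sees $x$. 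In particular, every leaf above $w$ must equal $x$. The doubling of the frame in $\mathcal{D}_\mathrm{Grz}$ rules this out unless $w=x$ is a leaf: by the $\iota$-trick of Lemma~\ref{K-GL-b}, any $w$ with $w<x$, or any non-leaf $w$, has another leaf above it that is different from $x$. The remaining $\Box$-values, namely $\Box p$, $\Box\Dmd\Box p$ and $\Box(\Dmd\Box p\vee p)$, all lie between $\Box\Box p=\Box p$ and $\Box\Dmd p=\Box p$, and are computed via Lemma~\ref{S4-lat} and monotonicity. This gives, for example, $\Box(\Dmd\Box p\vee p)\sim\Box p$.

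The main obstacle is exactly this use of the doubled tree to show that $\Box\Dmd p$ collapses to $\Box p$. It is the only place where the construction $\mathcal{F}^\circ$ matters. There we carefully verify the existence of a second leaf using $\iota$ applied to the immediate successor of $w$ on the path toward a leaf.
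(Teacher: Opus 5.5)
Your proposal is correct and follows essentially the same route as the paper. You close the seven classes under the operations, and the only nontrivial checks are $p\wedge\Dmd\Box p\sim\Box p$ (forced by $|\vartheta(p)|\le 1$) and $\Box\Dmd p\sim\Box p$, where the $\iota$-doubling forces $w$ to be maximal; your explicit extensions and two witnessing models simply spell out the distinctness and order checks that the paper calls routine, and your ``two distinct leaves'' phrasing is the same as the paper's ``$\Dmd p$ fails at $v$ or at $\iota v$''.
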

    \begin{proof}
      One can easily check that all formulas from $\Psi_\mathrm{Grz}$ are pairwise non-equivalent in
      $\mathcal{D}^\dagger_\mathrm{Grz}$. 
      Also, the ordering in Figure~\ref{fig:Grz} corresponds to the ordering in $\mathrm{Grz}$
      (i.e., if $[\alpha]$ is below $[\beta]$ in the figure, then $\mathrm{Grz}\vdash\alpha\to\beta$).
      The join of $[p]$ and $[\Dmd\Box p]$ is clear. 
      We need to check that $[p]\land[\Dmd\Box p] = [\Box p]$ or, more precisely, 
      that $\mathcal{D}^\dagger_\mathrm{Grz}\vDash p\land\Dmd\Box p\to\Box p$ 
      (the converse implication is provable in $\mathrm{Grz}$).
      Suppose that $\mathcal{M} = (S^\circ, \leq, \vartheta) \in \mathcal{D}^\dagger_\mathrm{Grz}$,
      $w\in\vartheta(p\land\Dmd\Box p)$.
      Then there is $v \in S^\circ$ such that $\mathcal{M}, v \vDash \Box p$.
      Since $v\leq v$, $v\in\vartheta(p)$.
      But $|\vartheta(p)|\leq1$, whence $w=v$ and $\mathcal{M}, w \vDash \Box p$.

      The actions of $\Dmd$ and $\Box$ presented in Figure~\ref{fig:Grz} are also mostly clear.
      We only need to check that $\Box[\Dmd p]=[\Box p]$ or, more precisely, 
      that $\mathcal{D}^\dagger_\mathrm{Grz}\vDash\Box\Dmd p\to\Box p$.
      Suppose that $\mathcal{M} = (S^\circ, \leq, \vartheta) \in \mathcal{D}^\dagger_\mathrm{Grz}$
      and $w\in\vartheta(\Box\Dmd p)$. 
      If there is $v \in S^\circ$ such that $w < v$, then $w < \iota v \in S^\circ$ and, 
      like in the proof of Lemma~\ref{K-GL-b}, $\Dmd p$ is false either in $v$ or in $\iota v$. 
      This contradicts $\mathcal{M}, w \vDash \Box\Dmd p$.
      Therefore, $w$ is a maximal element of $S^\circ$, i.e., $w\leq v\Leftrightarrow w = v$. 
      Thus, since $\mathcal{M}, w \vDash \Box\Dmd p$, $w\in\vartheta(p)$ and $\mathcal{M}, w \vDash \Box p$.
    \end{proof}
    The first two steps of our plan are now complete. 
    We also observed above that every formula in $\Psi_{\rm Grz}$ other than $\Box p$ is $\rm Grz$-additive.
    Therefore, it remains to verify that $[\Box p]$ contains no $\rm Grz$-additive representative,
    i.e., no $\rm Grz$-additive formula is $\mathcal{D}^\dagger_\mathrm{Grz}$-equivalent to $\Box p$.
    Notice that this claim is stronger than the simple observation that the formula $\Box p$ itself is not $\rm Grz$-additive.
    \begin{proposition}
      \label{p:Grz-nadd} No $\mathrm{Grz}$-additive formula is
      $\mathcal{D}^\dagger_\mathrm{Grz}$-equivalent to $\Box p$.
    \end{proposition}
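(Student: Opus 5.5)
Suppose, towards a contradiction, that $\varphi$ is $\mathrm{Grz}$-additive and $\mathcal{D}^\dagger_\mathrm{Grz}\vDash\varphi\leftrightarrow\Box p$. Since $\mathrm{Grz}$ has LIP, Theorem~\ref{t:lind} lets us assume that $\varphi$ is $p$-positive. The plan is to find a single finite $\mathrm{Grz}$-frame $\mathcal{F}$ where additivity of $\varphi_\mathcal{F}$ is incompatible with $\varphi_\mathcal{F}$ agreeing with $(\Box p)_\mathcal{F}$ on singletons and on $\emptyset$.

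We take $\mathcal{F}=(S^\circ,\leq)$ with $S=\{\langle\rangle,\langle0\rangle\}$. Its worlds are the root $\langle\rangle$ and two maximal points $a=\langle(0,0)\rangle$ and $b=\langle(0,1)\rangle=\iota a$. This frame belongs to $\mathcal{D}_\mathrm{Grz}$, and for parameter-free formulas the set $P$ plays no role, so every valuation of $p$ with $|\vartheta(p)|\le1$ gives a model in $\mathcal{D}^\dagger_\mathrm{Grz}$.

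The key steps are as follows.
\begin{enumerate}
\item From $\varphi\sim_{\mathcal{D}^\dagger}\Box p$ we read off the values of $\varphi_\mathcal{F}$ on small sets:
\[
\varphi_\mathcal{F}(\emptyset)=\emptyset,\qquad \varphi_\mathcal{F}(\{a\})=\{a\},\qquad \varphi_\mathcal{F}(\{b\})=\{b\}.
\]
These hold because $\Box p$ is true at a point iff its whole upset lies in $p$, and the root sees both $a$ and $b$.
\item Since $\varphi$ is $\mathrm{Grz}$-additive and $\mathcal{F}$ is a $\mathrm{Grz}$-frame, Proposition~\ref{p:add-c} gives $\varphi_\mathcal{F}(\{a,b\})=\{a,b\}$.
\item We evaluate $\varphi$ on $\{a,b\}$ a second way, by a symmetry argument. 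In the model with $\vartheta(p)=\{a,b\}$, every point satisfies $p\leftrightarrow\Box p$: at $a$ and $b$ both hold, and at the root both fail. Hence $p\leftrightarrow\Box p$ is globally true there. Taking the substitution instance of the $\mathcal{D}^\dagger$-equivalence is not legitimate directly, so instead we compare $\mathcal{F}$ with the one-point reflexive frame $\{\ast\}$ via a p-morphism. The relevant point is that the root lies outside $p$ but sees only $p$-points besides itself.
\end{enumerate}

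It is cleaner, though, to use a different frame where the contradiction is direct: a second-level reflexive node. We take $S=\{\langle\rangle,\langle0\rangle,\langle00\rangle\}$. Then $S^\circ$ contains a root $c$, middle points $m_j$, and tops. For a middle point $m$ with tops $t,t'=\iota t$, we have $\varphi_\mathcal{F}(\{t\})\not\ni m$ and $\varphi_\mathcal{F}(\{t'\})\not\ni m$. By additivity, $m\notin\varphi_\mathcal{F}(\{t,t'\})$. Now we use $p$-positivity and monotonicity: $\varphi_\mathcal{F}(\{t,t'\})\supseteq\varphi_\mathcal{F}(\{t,t',m\}\cap\dots)$. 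This does not directly help either.

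The decisive comparison, and the expected main obstacle, is this. We must find $U$ with $\Box p$ true at some point $w$ under $\vartheta(p)=U$, where $U$ is a union of singletons none of which makes $\Box p$ true at $w$, and then argue that $\varphi(U)\ni w$. For that we need more than singleton data: we need to know $\varphi$ on $U$. I would get it through a p-morphism. Collapsing $t,t'$ in the two-point-top frame onto the one-point reflexive frame $\{\ast\}$, with $U=\{t,t'\}$ mapped to $\{\ast\}=\vartheta(p)$, is a valid p-morphism. Similarly, the frame $\mathcal{F}'=(\{\langle\rangle,\langle(0,0)\rangle,\langle(0,1)\rangle\},\le)$ with $p=\{a,b\}$ maps onto the two-element chain $\{r<s\}$ with $p=\{s\}$, and that chain with $p=\{s\}$ is itself (the image of) a $\mathcal{D}^\dagger$ model. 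In that chain, $\Box p$ is true at $s$ but false at $r$. So truth values are preserved, and we must instead target a point where $\Box p$ holds on $U$.

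We take $w=a$'s analogue one level down. In $S^\circ$ for $S=\{\langle\rangle,\langle0\rangle,\langle00\rangle\}$, we let $U$ be the up-set of a middle point $m$, which is $\{m,t,t'\}$, and write $U=\{m,t\}\cup\{t'\}$. Then $\Box p$ holds at $m$ on $U$, and $\varphi(U)\ni m$ by p-morphism to the chain with $p$ a singleton top... The real work is choosing the decomposition so that each piece is p-morphic to a $\mathcal{D}^\dagger$ model where $\Box p$ fails at the image of $m$. Then additivity gives $m\notin\varphi(U)$, while the p-morphism of the whole $U$ onto a singleton-valued model gives $m\in\varphi(U)$, which is the contradiction. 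Finding these p-morphisms, in particular one that reduces $\{m,t\}$ to a singleton valuation while keeping $\Box p$ false at $m$, is the step I expect to require care.
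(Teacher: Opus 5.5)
\textbf{There is a gap: the proof is never completed.} Your ``decisive comparison'' paragraph states the right mechanism: find a set $U$ on which $\Box p$ holds at some $w$, such that $U$ is a union of singletons none of which makes $\Box p$ true at $w$, and compute $\varphi(U)$ by a p-morphism. But you never carry this out.

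In Step~3 you take $U=\{a,b\}$, and this choice cannot work. The only preimages under the collapse onto the one-point frame are $\emptyset$ and the whole frame, so $\{a,b\}$ is not one of them. Also, since the root is outside $U$, there is no conflict: $\Box p$ and additivity both give $\{a,b\}$.

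The missing choice is to put the tested point into $U$. Take $U=W=\{\langle\rangle,a,b\}$. Let $\mathcal{F}_0=(S_0^\circ,\leq)$ with $S_0=\{\langle\rangle\}$; this one-point reflexive frame is itself a frame of $\mathcal{D}_\mathrm{Grz}$. The constant map $f\colon\mathcal{F}\to\mathcal{F}_0$ is a p-morphism with $W=f^{-1}\{\langle\rangle\}$. The valuation $p=\{\langle\rangle\}$ on $\mathcal{F}_0$ is a singleton and makes $\Box p$ true, so $\varphi_{\mathcal{F}_0}\{\langle\rangle\}=\{\langle\rangle\}$. Hence $\varphi_\mathcal{F}(W)=f^{-1}\varphi_{\mathcal{F}_0}\{\langle\rangle\}=W$.

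On singletons we have $\varphi_\mathcal{F}\{\langle\rangle\}=\emptyset$ (the root sees $a\notin p$), $\varphi_\mathcal{F}\{a\}=\{a\}$ and $\varphi_\mathcal{F}\{b\}=\{b\}$. Additivity would then give $\varphi_\mathcal{F}(W)=\{a,b\}\neq W$, a contradiction. This is exactly the paper's proof, and it needs neither Theorem~\ref{t:lind} nor $p$-positivity.

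\textbf{The three-level attempt has the roles reversed.} The pieces of the decomposition need no p-morphisms. Split $U=\{m,t,t'\}$ into the singletons $\{m\},\{t\},\{t'\}$. On these, $\varphi$ agrees with $\Box p$ by hypothesis, giving $\emptyset,\{t\},\{t'\}$, none of which contains $m$. Only the whole $U$ needs a p-morphism.

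The step you flag as requiring care is in fact impossible. Suppose a p-morphism $g$ sends $m$ and $t$ to the same point $x$. Then $x$ is maximal, being the image of the maximal point $t$. Since $m\leq t'$, we get $g(t')=x$, so $\{m,t\}$ is never the preimage of a singleton.

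The two-element chain does not help either. It is not a frame of $\mathcal{D}_\mathrm{Grz}$, because non-root points there come in $\iota$-twins. Moreover, no $\mathcal{D}_\mathrm{Grz}$-frame maps onto it so that the top has a singleton preimage, since every maximal point must go to the top. So the hypothesis $\varphi\sim_{\mathcal{D}^\dagger_\mathrm{Grz}}\Box p$ says nothing about that chain. The target of the p-morphism must be a $\mathcal{D}_\mathrm{Grz}$-frame carrying a singleton valuation.

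For example, map $m,t,t'$ to $a$, the other middle point and its two tops to $b$, and the root to the root of $\mathcal{F}$. Then $\varphi(U)=g^{-1}\varphi_\mathcal{F}\{a\}=U\ni m$, which together with the singleton decomposition gives the contradiction. With these repairs, your three-level version is just the paper's argument moved one level up. The two-level frame with $U$ the whole frame is the cleaner choice.
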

    \begin{proof}
      Suppose that $\varphi \in \mathrm{Fm}(p)$ is $\mathcal{D}^\dagger_\mathrm{Grz}$-equivalent to
      $\Box p$. Consider the tree skeletons $S_0 \defeq \{ \langle\rangle \}$ and
      $S_1 \defeq \{ \langle\rangle, \langle 0\rangle \}$. Let
      $\mathcal{F}_0 \defeq (S_0^\circ, \leq)$, $\mathcal{F}_1 \defeq (S_1^\circ, \leq)$, and
      $w_j \defeq \langle(0, j)\rangle$ for $j = 0, 1$. Since
      $\varphi \sim_{\mathcal{D}^\dagger_\mathrm{Grz}} \Box p$,
      \begin{itemize}
        \item $\varphi_{\mathcal{F}_1} \{ \langle\rangle \} = (\Box p)_{\mathcal{F}_1} \{
              \langle\rangle \} = \emptyset$;
        \item $\varphi_{\mathcal{F}_1} \{ w_j \} = (\Box p)_{\mathcal{F}_1} \{ w_j \} = \{ w_j \}$
              for $j = 0, 1$;
        \item $\varphi_{\mathcal{F}_0} \{ \langle\rangle \} = (\Box p)_{\mathcal{F}_0} \{
              \langle\rangle \} = \{ \langle\rangle \}$.
      \end{itemize}
      Notice that $f : x \mapsto \langle\rangle$ is a p-morphism from $\mathcal{F}_1$ onto
      $\mathcal{F}_0$, whence
      \begin{gather*}
        \varphi_{\mathcal{F}_1} \{\langle\rangle, w_0, w_1\} = \varphi_{\mathcal{F}_1} f^{-1} \{
        \langle\rangle \} = f^{-1}\varphi_{\mathcal{F}_0} \{ \langle\rangle \} = f^{-1} \{
        \langle\rangle \} = \{\langle\rangle, w_0, w_1\}.
      \end{gather*}
      At the same time,
      $
        \varphi_{\mathcal{F}_1} \{ \langle\rangle \} \cup \varphi_{\mathcal{F}_1} \{ w_0 \} \cup
        \varphi_{\mathcal{F}_1} \{ w_1 \} = \{ w_0, w_1 \}
      $.
      Thus, $\varphi$ is not $\mathrm{Grz}$-additive.
    \end{proof}
    \begin{theorem}
      \label{t:Grz}
      There are exactly six $\mathrm{Grz}$-additive formulas without parameters up to
      $\mathrm{Grz}$-equivalence: $\bot$, $p$, $\Dmd p$, $\top$, $\Dmd\Box p$, and
      $\Dmd\Box p \vee p$. Five of them (all except $\top$) are normal.
    \end{theorem}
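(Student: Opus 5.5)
The theorem follows from the three-step plan laid out at the start of this section, with the first two steps already done. By the preceding proposition, the quotient $\mathfrak{Fm}^+(p)/{\sim}_{\mathcal{D}^\dagger_\mathrm{Grz}}$ has exactly seven elements, namely the classes of the formulas in $\Psi_\mathrm{Grz}$. Since $\mathrm{Grz}$ has LIP, Theorem~\ref{t:lind} shows that every $\mathrm{Grz}$-additive formula (being monotone) is $\mathrm{Grz}$-equivalent to a $p$-positive formula, and hence lies in one of these seven classes. By Lemma~\ref{l:gen-2}, applied with the class $\mathcal{D}_\mathrm{Grz}$ of finite general frames, which is good for $\mathrm{Grz}$, each class contains at most one $\mathrm{Grz}$-additive formula up to $\mathrm{Grz}$-equivalence. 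By Proposition~\ref{p:Grz-nadd}, the class $[\Box p]$ contains none. So it suffices to show that the remaining six representatives $\bot$, $p$, $\Dmd p$, $\top$, $\Dmd\Box p$, $\Dmd\Box p\vee p$ are indeed $\mathrm{Grz}$-additive.

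For $\bot$, $p$, $\Dmd p$ this is Lemma~\ref{l:add-closed}, and $\top$ is trivially additive since it lies in ${\rm Fm}(\vec r)$. For $\Dmd\Box p$ I would argue semantically using Proposition~\ref{p:add-c} and the class $\mathcal{C}_\mathrm{Grz}$ of finite trees with reflexive order. In a finite partial order, $w\vDash\Dmd\Box p$ iff some maximal element above $w$ lies in $\vartheta(p)$: if $v\geq w$ satisfies $\Box p$, then any maximal $u\geq v$ is in $\vartheta(p)$, and conversely a maximal point in $\vartheta(p)$ satisfies $\Box p$. Thus $(\Dmd\Box p)_\mathcal{F}=(R')^{-1}$, where $w\mathrel{R'}u$ iff $w\leq u$ and $u$ is maximal, exactly as in the proof of Proposition~\ref{p:s4-dbd}. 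Full preimages under a relation are completely additive, hence additive and normal. Then $\Dmd\Box p\vee p$ is additive and normal by Lemma~\ref{l:add-closed}, as a disjunction of normal additive formulas.

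The pairwise non-equivalence of the six formulas in $\mathrm{Grz}$ is inherited from their non-equivalence in $\mathcal{D}^\dagger_\mathrm{Grz}$, since every frame underlying $\mathcal{D}_\mathrm{Grz}$ is a $\mathrm{Grz}$-frame. Completeness follows from the counting argument of the first paragraph. For normality, the substitution $p:=\bot$ gives $\bot$ in each of $\bot$, $p$, $\Dmd p$, $\Dmd\Box p$, $\Dmd\Box p\vee p$, but gives $\top$ in $\top$, so exactly five are normal.

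The only step with content is the additivity of $\Dmd\Box p$, and the only subtlety there is relying on finiteness (maximal elements exist) via the finite frame class. Because $\mathrm{Grz}=\mathrm{Log}\,\mathcal{C}_\mathrm{Grz}$, Proposition~\ref{p:add-c} lets us avoid infinite frames, where complete additivity can fail.
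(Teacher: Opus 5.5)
Your proof is correct and follows essentially the paper's own route: the three-step plan, using the seven-class description of $\mathfrak{Fm}^+(p)/{\sim}_{\mathcal{D}^\dagger_\mathrm{Grz}}$, Lemma~\ref{l:gen-2}, and Proposition~\ref{p:Grz-nadd} to exclude $[\Box p]$. The only cosmetic difference is that you check additivity of $\Dmd\Box p$ directly on finite $\mathrm{Grz}$-frames via $(R')^{-1}$, whereas the paper obtains it from $\Dmd\Box p\sim_{\mathrm{Grz}}\Dmd\Box\Dmd p$ and the same $(R')^{-1}$ argument in Proposition~\ref{p:s4-dbd}.
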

    \begin{remark}
        \label{r:Grz-par}
        It is easy to check that there are infinitely many pairwise non-$\rm Grz$-equivalent formulas in ${\rm Fm}(r)$,
        whence there are infinitely many non-equivalent $\delta$-formulas with one parameter in $\rm Grz$ and, 
        even more so, in $\rm S4$.
    \end{remark}
  \subsection{Additive formulas in S4}
    For non-empty sets $T$ and $\Sigma$, consider the following relation on $\Sigma^* \times T$:
    \begin{gather*}
      (\vec a, k) \preceq (\vec b, l) \Leftrightarrow \vec a \leq \vec b.
    \end{gather*}
    It is well known that $\mathrm{S4}$ is the logic of the following class of frames:
    \begin{gather*}
      \mathcal{C}_\mathrm{S4} \defeq \{ (S \times T, \preceq) \mid S \subseteq \omega^* \text{ is a
      finite tree skeleton}, T \subseteq \omega \text{ is a finite non-empty set} \}.
    \end{gather*}
    For $\mathcal{F} = (S \times T, \preceq) \in \mathcal{C}_\mathrm{S4}$, we put
    $\mathcal{F}^\circ \defeq (S^\circ \times T^\circ, \preceq)$, where $T^\circ \defeq T \times B$.
    Clearly, $\mathcal{F}^\circ \vDash \mathrm{S4}$ and
    \[
        \rho : S^\circ\times T^\circ\to S\times T,\quad (v,(c, j))\mapsto(\pi v,c)
    \]
    is a p-morphism $\mathcal{F}^\circ \twoheadrightarrow \mathcal{F}$, whence
    $\mathcal{D}_\mathrm{S4} \defeq \rho^{-1}\mathcal{C}_\mathrm{S4}$ is a good 
    class of general frames for $\mathrm{S4}$.
    \begin{figure}
        \centering
        \begin{tikzpicture}[
  ord/.style={dashed},
  dx/.style={shift={(#1,0)}},
  dx/.value required,
  dy/.style={shift={(0,#1)}},
  dy/.value required,
  above/.style={dy=1},
  left/.style={dx=-1.5},
  right/.style={dx=1.5},
]


\newcommand{\C}[1]{[#1]}

\node (1) at (0,0) {$\C{\bot}$};
\node (2) at ([above]1) {$\C{\Box\Dmd p\land p}$};
\node (3a) at ([left,above]2) {$\C{p}$};
\node (3b) at ([right,above]2) {$\C{\Box\Dmd p}$};
\node (4a) at ([left,above]3b) {$\C{\Box\Dmd p\lor p}$};
\node (4b) at ([right,above]3b) {$\C{\Dmd\Box\Dmd p}$};
\node (5) at ([left,above]4b) {$\C{\Dmd\Box\Dmd p\lor p}$};
\node (6) at ([above]5) {$\C{\Dmd p}$};
\node (7) at ([above]6) {$\C{\top}$};

\draw[ord] (1) -- (2) -- (3a) -- (4a) -- (5) -- (6) -- (7);
\draw[ord] (2) -- (3b) -- (4a);
\draw[ord] (3b) -- (4b) -- (5);

\draw[loop right,looseness=5,->] (1) to node[dx=.2] {$\Dmd$} (1);
\draw[loop left, looseness=5,->] (7) to node[dx=-.2] {$\Dmd$} (7);
\draw[loop right,looseness=5,->] (7) to node[dx=.2] {$\Box$} (7);

\draw[bend right,looseness=1,->] (3a) to node[label=-92:$\Box$]{} (1);
\draw[bend right,looseness=1,->] (2)  to node[label=0:$\Dmd$]{} (4b);
\draw[bend left, looseness=1,->] (3a) to node[label=$\Dmd$]{} (6);
\draw[->] (6.east) to[in angle=90,curve through={([dx=.15]5.east) ([dy=.3]3b.north)}] (3b.north);
\node[label=93:$\Box$] at ([dy=.25]3b.north) {};

\end{tikzpicture} 
        \caption{The lattice $\mathfrak{Fm}^+(p)/{\sim}_{\mathcal{D}^\dagger_\mathrm{S4}}$} 
        \label{fig:S4}
    \end{figure}
    \begin{proposition}
      The modal lattice $\mathfrak{Fm}(p)/{\sim}_{\mathcal{D}^\dagger_\mathrm{S4}}$ consists of the
      equivalence classes of the formulas from the set
      \begin{gather*}
        \Psi_\mathrm{S4} \defeq \{ \bot,\; p,\; \top,\; \Dmd p,\; \Box\Dmd p,\; \Box\Dmd p \wedge p,\;
        \Box\Dmd p \vee p,\; \Dmd\Box\Dmd p,\; \Dmd\Box\Dmd p \vee p \},
      \end{gather*}
      where operations are shown in Figure~\ref{fig:S4} (missing arrows for $\Dmd$ and $\Box$
      can be restored by Lemma~\ref{S4-lat}).
    \end{proposition}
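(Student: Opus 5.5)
The strategy follows the $\mathrm{Grz}$ case. First, the nine formulas of $\Psi_\mathrm{S4}$ are pairwise non-equivalent in $\mathcal{D}^\dagger_\mathrm{S4}$. Second, every ordering edge in Figure~\ref{fig:S4} is already an $\mathrm{S4}$-implication. Third, the set of classes is closed under $\wedge$, $\vee$, $\Dmd$, $\Box$ with the values shown. Since $\sim_{\mathcal{D}^\dagger_\mathrm{S4}}$ is a congruence on $\mathfrak{Fm}^+(p)$, closure together with containing $[\bot],[\top],[p]$ gives, by induction on formulas, that every $p$-positive formula falls into one of the nine classes.

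For non-equivalence, small models suffice. Take $S=\{\langle\rangle,\langle 0\rangle\}$ with $T$ a singleton, so $T^\circ$ has two points and each $\pi$-level is a two-element cluster. Putting $p$ at a single world (or nowhere) separates all classes. For example, $\Box\Dmd p\wedge p$ holds at the unique $p$-world only if that world lies in a final cluster. In that case $\Box\Dmd p$ also holds at the other worlds of that cluster and at worlds below it only when every route upward hits $p$. The duplication by $\iota$ and by $T^\circ$ prevents the latter whenever there is any branching above.

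Most lattice operations are immediate from $\mathrm{S4}$-validities and Lemma~\ref{S4-lat}. For instance, $\Dmd[\Box\Dmd p\wedge p]=[\Dmd\Box\Dmd p]$, since $\Dmd\Box\Dmd p\to\Dmd(\Box\Dmd p\wedge p)$ is valid in any model from $\mathcal{D}^\dagger$: $p$ is a singleton, so $\Box\Dmd p$ at $v$ forces $p$ to lie in $v$'s final cluster. The nontrivial identities, which must be checked semantically as in the $\mathrm{Grz}$ proof, are:
\begin{itemize}
\item $[p]\wedge[\Box\Dmd p]=[\Box\Dmd p\wedge p]$, which is trivial;
\item $[p]\wedge[\Dmd\Box\Dmd p]=[\Box\Dmd p\wedge p]$, which needs $p\wedge\Dmd\Box\Dmd p\to\Box\Dmd p$;
\item $\Box[p]=[\bot]$, which needs $\Box p\to\bot$;
\item $\Box[\Dmd p]=[\Box\Dmd p]$, which is trivial, with the other $\Box$-values following by Lemma~\ref{S4-lat} and monotonicity.
\end{itemize}

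The $\Box$ identities use the cluster duplication $T^\circ=T\times B$. Every world has a cluster-mate differing only in the $B$-coordinate, so $\Box p$ would force two $p$-worlds, contradicting $|\vartheta(p)|\le1$. The meet identity uses that if $w\vDash p$ and $w\le v\vDash\Box\Dmd p$, then the unique $p$-world $w$ is reachable from $v$. Hence $w$ lies in $v$'s cluster, and $\Box\Dmd p$ transfers to $w$. Likewise, $\Box\Dmd p\wedge\Box\Dmd p$-style meets with $\Dmd\Box\Dmd p\vee p$ reduce by distributivity to these cases.

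The main obstacle is organizing the closure check without missing a case among the 81 meets and 18 modal values. I would compute meets only for incomparable pairs, which the Hasse diagram limits to a few: $p$ versus $\Box\Dmd p$, $p$ versus $\Dmd\Box\Dmd p$, and $\Box\Dmd p\vee p$ versus $\Dmd\Box\Dmd p$. The last of these reduces, via the meet identities above, to $\Box\Dmd p\vee(\Box\Dmd p\wedge p)=\Box\Dmd p$. For the modal values, I would compute $\Dmd$ and $\Box$ only on the join-irreducibles and meet-irreducibles and propagate to the rest by Lemma~\ref{S4-lat}.
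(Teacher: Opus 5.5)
Your proposal is correct and follows essentially the same route as the paper. Both arguments rest on three ingredients: pairwise separation of the nine classes; an ordering that is already valid in $\mathrm{S4}$; and two $\mathcal{D}^\dagger_{\mathrm{S4}}$-specific facts. The first fact is $p\wedge\Dmd\Box\Dmd p\to\Box\Dmd p$, which holds because the unique $p$-world shares a cluster with $v$. The second is $\Box p\to\bot$, which follows from the $T^\circ$ cluster duplication. The only difference is that you check $\Dmd\Box\Dmd p\to\Dmd(\Box\Dmd p\wedge p)$ semantically in $\mathcal{D}^\dagger_{\mathrm{S4}}$, whereas the paper notes that it is already derivable in $\mathrm{S4}$ (because $\Box\Dmd p\to\Dmd(p\wedge\Box\Dmd p)$ holds on every reflexive transitive frame).
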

    \begin{proof}
      Similarly to the case of $\mathrm{Grz}$, it is easy to check that all formulas from 
      $\Psi_\mathrm{S4}$ are pairwise non-equivalent in $\mathcal{D}^\dagger_\mathrm{S4}$ and
      the ordering in Figure~\ref{fig:S4} corresponds to the ordering in $\mathrm{S4}$. 
      Most of the joins and meets are also trivial. 
      We only need to check that $[\Box\Dmd p \vee p] \wedge [\Dmd\Box\Dmd p] = [\Box\Dmd p]$
      or, more precisely, that $\mathcal{D}^\dagger_\mathrm{S4}\vDash p \wedge \Dmd\Box\Dmd p \to \Box\Dmd p$.
      Suppose that $\mathcal{M} = (S^\circ \times T^\circ, \preceq, \vartheta) \in \mathcal{D}^\dagger_\mathrm{S4}$,
      $w \in \vartheta(p)$ and $\mathcal{M}, w \vDash \Dmd\Box\Dmd p$. Then there is
      $v \in S^\circ \times T^\circ$ such that $w \preceq v$ and $\mathcal{M}, v \vDash \Box\Dmd p$.
      In particular, $\mathcal{M}, v \vDash \Dmd p$. Since $|\vartheta(p)| \leq 1$, $v \preceq w$.
      Therefore, $w \preceq u \Leftrightarrow v \preceq u$ for all $u$, whence
      $\mathcal{M}, w \vDash \Box\Dmd p$.

      Now, let us verify that the modal operators in Figure~\ref{fig:S4} correspond to those in
      $\mathfrak{Fm}(p)/{\sim}_{\mathcal{D}^\dagger_\mathrm{S4}}$. 
      The only non-trivial implications here are $\Dmd\Box\Dmd p \to \Dmd(\Box\Dmd p \wedge p)$ and
      $\Box p \to \bot$. The first is derivable in $\mathrm{S4}$. Let us check that
      $\mathcal{D}^\dagger_\mathrm{S4} \vDash \neg\Box p$. Indeed, for each
      $w = (w', (c, j)) \in S^\circ \times T^\circ$,
      $v \defeq (w', (c, 1-j)) \in S^\circ \times T^\circ$ and $w \preceq v$. 
      Since the worlds $w$ and $v$ are distinct, $p$ is false in at least one of them in every 
      $\mathcal{M}\in\mathcal{D}^\dagger_\mathrm{S4}$, whence $\mathcal{M},w\nvDash\Box p$.
    \end{proof}
    \begin{proposition}
      No $\mathrm{S4}$-additive formula is $\mathcal{D}^\dagger_\mathrm{S4}$-equivalent to any of
      the following formulas: $\Box\Dmd p$, $\Box\Dmd p \wedge p$, and $\Box\Dmd p \vee p$.
    \end{proposition}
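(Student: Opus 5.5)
The plan is to follow the pattern of Proposition~\ref{p:Grz-nadd}. Suppose $\varphi\in\mathrm{Fm}(p)$ is $\mathrm{S4}$-additive and $\mathcal{D}^\dagger_\mathrm{S4}$-equivalent to one of the three formulas. First, I will compute $\varphi$ on singletons of a small frame from $\mathcal{D}_\mathrm{S4}$; these values are forced by the equivalence. Second, I will compute $\varphi$ on a larger set $U$ using a p-morphism. If the value on $U$ strictly exceeds the union of the singleton values, $\varphi$ is not additive, which is a contradiction.

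Throughout, $\varphi$ is $\mathrm{S4}$-additive, hence monotone, and since $\mathrm{S4}$ has LIP it may be taken $p$-positive. I use two frames. The first is $\mathcal{G}^\circ:=(\{\langle\rangle\}^\circ\times\{0\}^\circ,\preceq)$, a two-point cluster $\{x_0,x_1\}$; it p-morphically maps onto the one-point reflexive frame $\mathcal{G}$, and $\mathcal{G}\in\mathcal{C}_\mathrm{S4}$. The second is $\mathcal{F}_1:=(S_1^\circ\times\{0\}^\circ,\preceq)$ with $S_1=\{\langle\rangle,\langle0\rangle\}$. It has a two-point root cluster $C$ and two two-point top clusters $D_0,D_1$ (over $w_0,w_1$). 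The constant map $f:\mathcal{F}_1\twoheadrightarrow\mathcal{G}$ is a p-morphism. On $\mathcal{G}$ the value $\varphi_\mathcal{G}(\{\ast\})$ is $\emptyset$ or $\{\ast\}$. Pulling back along $\mathcal{G}^\circ\twoheadrightarrow\mathcal{G}$ and using monotonicity, $\varphi_{\mathcal{G}^\circ}(\{x_0,x_1\})\supseteq\varphi_{\mathcal{G}^\circ}(\{x_0\})$. This set is nonempty for $\Box\Dmd p$ (it equals $\{x_0,x_1\}$) and for $\Box\Dmd p\wedge p$ (it equals $\{x_0\}$). Hence $\varphi_\mathcal{G}(\{\ast\})=\{\ast\}$, and so $\varphi_{\mathcal{F}_1}(W)=f^{-1}\{\ast\}=W$, where $W$ is the whole of $\mathcal{F}_1$.

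Now compute the singleton values in $\mathcal{F}_1$. For $u\in D_j$, $\Box\Dmd p$ holds exactly on $D_j$, since the root sees $D_{1-j}$, where $\Dmd p$ fails. For $u\in C$ it holds nowhere, because the top points do not see $C$. Likewise $\Box\Dmd p\wedge p$ gives $\{u\}$ for top $u$ and $\emptyset$ for root $u$. In both cases the union over singletons is $D_0\cup D_1\neq W$, which contradicts additivity. This settles $\Box\Dmd p$ and $\Box\Dmd p\wedge p$.

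The main obstacle is $\Box\Dmd p\vee p$. Here the $p$-disjunct makes each root singleton produce itself, so taking $U=W$ gives no contradiction. Instead I will take $U=D_0\cup D_1$. The singleton union is then $D_0\cup D_1$, because the root fails $\Box\Dmd p$ and fails $p$. It remains to show $C\subseteq\varphi_{\mathcal{F}_1}(U)$. The natural route is a p-morphism $g$ of $\mathcal{F}_1$ onto the two-point reflexive chain $\mathcal{H}$ (root $\le$ top) with $U=g^{-1}\{\mathrm{top}\}$. We then need $\mathrm{root}\in\varphi_\mathcal{H}(\{\mathrm{top}\})$. To force this, I would look for a frame $\mathcal{K}$ in $\mathcal{D}_\mathrm{S4}$, or a generated subframe of one, with a p-morphism onto $\mathcal{H}$ under which some singleton $\{t\}$ lies in the preimage of $\mathrm{top}$. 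Such a $\mathcal{K}$ must have a root that sees only points whose cluster-mates include $t$, so that $\Box\Dmd p$ holds there when $p=\{t\}$. Monotonicity then lifts this to the full preimage. The candidate is a frame built from a single-branch skeleton, where the doubling is absorbed in the $T^\circ$ coordinate. If no such $\mathcal{K}$ is available, the fallback is a direct syntactic argument: $\Box\Dmd p\vee p$ differs from $\Box\Dmd p$ only by the disjunct $p$, so I would try to adapt the previous computation to that difference.
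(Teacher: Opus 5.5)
Your treatment of $\Box\Dmd p$ and $\Box\Dmd p\wedge p$ is correct and close to the paper's. The constant p-morphism $\mathcal F_1\twoheadrightarrow\mathcal G$, with $\varphi_{\mathcal G}(\{\ast\})$ pinned down via $\mathcal G^\circ$ and monotonicity, forces $\varphi_{\mathcal F_1}(W)=W$, while the singleton values only cover $D_0\cup D_1$.

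The gap is the third formula, $\Box\Dmd p\vee p$: you leave it open, and the route you sketch cannot be completed. The relevant doubling comes from $S^\circ=\pi^{-1}S$, which duplicates every edge of the skeleton. So in every frame of $\mathcal D_{\mathrm{S4}}$, and in every generated subframe of one, each non-maximal point sees at least two distinct final clusters. This cannot be ``absorbed'' into the $T^\circ$-coordinate. Hence for $p=\{t\}$ the formula $\Box\Dmd p\vee p$ evaluates to $\{t\}$, or, when $t$ is maximal, to the final cluster of $t$. Any p-morphism $h$ onto the two-element chain $\mathcal H$ sends cluster-mates of $t$ to $h(t)$. Therefore no singleton $\{t\}\subseteq h^{-1}\{\mathrm{top}\}$ can place a preimage of the root in $\varphi_{\mathcal K}(\{t\})$. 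The frame $\mathcal K$ you are looking for does not exist, and the ``syntactic fallback'' is not an argument. Your choice $U=D_0\cup D_1$ is not hopeless in itself. For $p$-positive $\varphi$ one can pin down $\varphi_{\mathcal F_1}(D_0\cup D_1)\supseteq C$ by a positive (directed) simulation from $x_0$ in $(\mathcal G^\circ,p=\{x_1\})$ to a root point of $(\mathcal F_1,p=D_0\cup D_1)$. But that is a different idea from the one you propose.

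The missing idea, which is what the paper does, is to collapse only the skeleton coordinate and keep the $T^\circ$-coordinate. Let $\hat f$ send each point of $\mathcal F_1$ to the point $x_j$ of $\mathcal G^\circ$ with the same $T^\circ$-coordinate $(0,j)$. This is a p-morphism onto $\mathcal G^\circ$, which is itself the frame of a member of $\mathcal D_{\mathrm{S4}}$, so $\varphi_{\mathcal G^\circ}(\{x_1\})$ is forced. Put $U:=\hat f^{-1}\{x_1\}$, which consists of one point from each of $C$, $D_0$, $D_1$. For $\Box\Dmd p\vee p$ you get $\varphi_{\mathcal F_1}(U)=\hat f^{-1}\{x_0,x_1\}=W$. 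The union of the singleton values over $U$ is only $(U\cap C)\cup D_0\cup D_1$, which misses the point of $C\setminus U$. The same $U$ handles the other two formulas, so the proof becomes uniform. For $\Box\Dmd p$ the forced value is $W$ against the union $D_0\cup D_1$. For $\Box\Dmd p\wedge p$ it is $U$ against the union $U\setminus C$.
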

    \begin{proof}
      Let $S_0$, $S_1$, $w_0$, and $w_1$ be as in the proof of Proposition~\ref{p:Grz-nadd},
      $T \defeq \{0\}$, $c_j \defeq (0, j)$ for $j = 0, 1$. Consider the frames
      $\widehat{\mathcal F}_0 \defeq (S_0^\circ \times T^\circ, \preceq)$ and
      $\widehat{\mathcal F}_1 \defeq (S_1^\circ \times T^\circ, \preceq)$,. Suppose that
      $\varphi \sim_{\mathcal{D}^\dagger_\mathrm{S4}} \psi$, where
      $\psi \in \{ \Box\Dmd p, \Box\Dmd p \wedge p, \Box\Dmd p \vee p \}$. 
      One can check that operators $\varphi_{\widehat{\mathcal F}_0}$ and
      $\varphi_{\widehat{\mathcal F}_1}$ act on singletons as follows: 
      \[\begin{array}{c|c|c|c}
          \psi & \varphi_{\widehat{\mathcal F}_0}\{(\langle\rangle, c_1)\}  &
          \varphi_{\widehat{\mathcal F}_1}\{(\langle\rangle, c_1)\} &
          \varphi_{\widehat{\mathcal F}_1}\{(w_j, c_1)\} \rule[-1.2ex]{0pt}{0pt}
        \\ \hline
        \Box\Dmd p
        & \{(\langle\rangle,c_0),(\langle\rangle,c_1)\}  & \emptyset & \{(w_j,c_0),(w_j,c_1)\} \rule{0pt}{2.2ex}\\
        \Box\Dmd p \wedge p
        & \{(\langle\rangle, c_1)\}  & \emptyset & \{(w_j, c_1)\}\\
        \Box\Dmd p \vee p
        & \{(\langle\rangle,c_0),(\langle\rangle,c_1)\} & \{(\langle\rangle, c_1)\} & \{(w_j,c_0),(w_j,c_1)\}
      \end{array}\]
      Since $\hat f : (x, c_j) \mapsto (\langle\rangle, c_j)$ is a p-morphism from
      $\widehat{\mathcal F}_1$ onto $\widehat{\mathcal F}_0$,
      \begin{gather*}
        \varphi_{\widehat{\mathcal F}_1}\{(\langle\rangle,c_1),(w_0,c_1),(w_1,c_1)\} =
        \varphi_{\widehat{\mathcal F}_1}\hat f^{-1}\{(\langle\rangle, c_1)\} =
        \hat f^{-1}\varphi_{\widehat{\mathcal F}_0}\{(\langle\rangle, c_1)\}.
      \end{gather*}
      It is easy to check that, in all three cases,
      \begin{gather*}
        \hat f^{-1}\varphi_{\widehat{\mathcal F}_0}\{(\langle\rangle, c_1)\} \neq
        \varphi_{\widehat{\mathcal F}_1}\{(\langle\rangle, c_1)\} \cup
        \varphi_{\widehat{\mathcal F}_1}\{(w_0, c_1)\} \cup \varphi_{\widehat{\mathcal F}_1}\{(w_1,
        c_1)\},
      \end{gather*}
      whence $\varphi$ is not $\mathrm{S4}$-additive.
    \end{proof}
    \begin{theorem}
      \label{t:S4}
      There are exactly six $\mathrm{S4}$-additive formulas without parameters up to
      $\mathrm{S4}$-equivalence: $\bot$, $p$, $\Dmd p$, $\top$, $\Dmd\Box\Dmd p$, and
      $\Dmd\Box\Dmd p \vee p$. Five of them (all except $\top$) are normal.
    \end{theorem}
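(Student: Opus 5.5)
The plan follows the three-step strategy used for $\mathrm{Grz}$ in Theorem~\ref{t:Grz}. The class $\mathcal{D}_\mathrm{S4}$ is good for $\mathrm{S4}$, and $\mathrm{S4}$ has LIP. By Theorem~\ref{t:lind}, every $\mathrm{S4}$-additive formula is therefore $\mathrm{S4}$-equivalent to a $p$-positive one. By the preceding proposition describing $\mathfrak{Fm}^+(p)/{\sim}_{\mathcal{D}^\dagger_\mathrm{S4}}$, such a formula is $\mathcal{D}^\dagger_\mathrm{S4}$-equivalent to exactly one element of $\Psi_\mathrm{S4}$. By Lemma~\ref{l:gen-2}, each of the nine classes contains at most one $\mathrm{S4}$-additive formula up to $\mathrm{S4}$-equivalence. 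The last proposition rules out the classes of $\Box\Dmd p$, $\Box\Dmd p\wedge p$ and $\Box\Dmd p\vee p$. Hence there are at most six $\mathrm{S4}$-additive formulas up to equivalence, and it remains to show that each of the six listed formulas is indeed additive. Two formulas lying in different $\sim_{\mathcal{D}^\dagger_\mathrm{S4}}$-classes are not $\mathrm{S4}$-equivalent, since every frame underlying $\mathcal{D}_\mathrm{S4}$ is an $\mathrm{S4}$-frame. So the six formulas are pairwise non-equivalent, and the count is exactly six.

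For additivity: $\bot$, $p$, $\Dmd p$ and $\top$ are additive in any normal logic by Lemma~\ref{l:add-closed} (for $\top$, take the constant $\nu=\top$). The formula $\Dmd\Box\Dmd p$ is normal $\mathrm{S4}$-additive by Proposition~\ref{p:s4-dbd}. Then $\Dmd\Box\Dmd p\vee p$ is additive as the disjunction of two normal additive formulas, again by Lemma~\ref{l:add-closed}. The one non-routine ingredient is the additivity of $\Dmd\Box\Dmd p$, which Proposition~\ref{p:s4-dbd} already supplies: there it is shown to be additive on all finite $\mathrm{S4}$-frames, and finite frames suffice because $\mathrm{S4}$ has the finite model property. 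So this step can simply be cited.

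For normality: substituting $\bot$ for $p$ gives $\bot$, $\bot$, $\Dmd\bot\sim\bot$, $\top$, $\Dmd\Box\Dmd\bot\sim\bot$ and $\bot\vee\Dmd\Box\Dmd\bot\sim\bot$ respectively. Thus every formula except $\top$ is normal.

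The substantive work all lies in the two preceding propositions: computing the quotient lattice, and excluding the three classes not containing an additive formula. Given those, no step of this proof should pose a real obstacle; the only thing to watch is the bookkeeping of applying Lemma~\ref{l:gen-2} to nonnegated, $p$-positive representatives obtained via Theorem~\ref{t:lind}.
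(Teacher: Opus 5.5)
Your proof is correct and matches the paper's route: the good class $\mathcal{D}_\mathrm{S4}$ and LIP (Theorem~\ref{t:lind}), the nine classes of $\mathfrak{Fm}^+(p)/{\sim}_{\mathcal{D}^\dagger_\mathrm{S4}}$, Lemma~\ref{l:gen-2}, the exclusion of the three $\Box\Dmd p$-classes, and Proposition~\ref{p:s4-dbd} for the additivity of $\Dmd\Box\Dmd p$. One trivial citation slip: the additivity of $\top$ follows from the corollary on $\bigvee\Delta_0\vee\nu$ (with $\Delta_0=\emptyset$, $\nu=\top$), not from Lemma~\ref{l:add-closed} itself.
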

\section{Some notes on interpretations}
  \label{s:interp}
  Let us make a few remarks regarding the interpretations of normal logics, 
  partly building on the results of this paper.
  This task is not as straightforward as it might seem at first glance, for several reasons:
  \begin{itemize}
      \item Non-equivalent formulas can define interpretations of the same logic. 
      \begin{example}
          In almost all standard modal logics ($\rm K$, $\rm K4$, $\rm GL$, $\rm S4$, $\rm S5$, $\rm K4.3$, etc.),
          $\tau_{\Diamond^np}^{-1}\Lambda=\Lambda$ for all $n\geq1$.
          Such logics are called \emph{iterative} in~\cite{Aba86}.
          At the same time, the formulas $\Dmd^np$ are pairwise non-equivalent in most of them (except the ones extending $\rm S4$).
      \end{example}
      \item Even for fixed $\Lambda$ and $\alpha$, it can be challenging to provide 
      an axiomatization of the logic interpreted in $\Lambda$ by $\tau_\alpha$.
      \begin{example}
          Take $\alpha:=\Dmd^2 p \lor \Dmd^3 p$ and consider the logic $\tau_\alpha^{-1}{\rm K}$. 
          This logic is strictly stronger than $\mathrm{K}$:
          the formula $\Box^2 p \land \Dmd^3 \top \to \Dmd^3 p$ is derivable in it, but not in $\mathrm{K}$.
          The complete axiomatization of $\tau_\alpha^{-1}{\rm K}$ is unknown
          (though it follows from Proposition~\ref{p:interp-sem} below that $\tau^{-1}_\alpha{\rm K}$ 
          is the logic of the class of frames $\{(W, R\cup R^2)\mid(W,R)\in\mathcal{C}_{\rm K}\}$). 
      \end{example}
  \end{itemize}
  Therefore, we postpone a systematic study of the interpretability of normal logics 
  to a subsequent paper. 
  Here we will present only the simplest observations. 
  Firstly, one can refine \cite[Theorem~4.27]{Zol00} by proving that 
  infinitely many distinct \emph{normal} logics are interpretable
  in every logic between $\rm K$ and $\rm GL$:
  \begin{proposition}
      \label{p:K-GL-inf}
      Let $\Lambda$ be a normal modal logic such that ${\rm K}\subseteq\Lambda\subseteq{\rm GL.3}$. 
      Then infinitely many distinct normal modal logics are interpretable in $\Lambda$ by modal-to-modal translations
      without parameters.
  \end{proposition}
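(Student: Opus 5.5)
The idea is to use a family of parameter-free $\delta$-formulas whose translations force the interpreted logics to have different bounded depth. For $n\geq 1$ put
\[
  \alpha_n \defeq \Box^n\bot \wedge \Dmd p ,
\]
which is the $\delta$-formula $\delta_{\langle \Box^n\bot,\top\rangle}$ with the variable-free formula $\Box^n\bot$ as $\varkappa_0$. By Lemma~\ref{l:add-closed}, $\alpha_n$ is normal $\Lambda$-additive in every normal logic $\Lambda$. Hence $\Lambda_n \defeq \tau_{\alpha_n}^{-1}\Lambda$ is a normal modal logic interpreted in $\Lambda$ by a modal-to-modal translation without parameters. It then suffices to show that $\Box^n\bot\in\Lambda_n$ and $\Box^{n-1}\bot\notin\Lambda_n$. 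Then all the $\Lambda_n$ are pairwise distinct: for $m<n$ we have $\Box^m\bot\in\Lambda_m$ but $\Box^m\bot\notin\Lambda_n$, since $\Box^m\bot\to\Box^{n-1}\bot$ holds in every normal logic.

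First I would compute the translations of $\Dmd^k\top$. By induction on $k$,
\[
  \tau_{\alpha_n}(\Dmd^{k}\top) = \Box^n\bot\wedge\Dmd\bigl(\Box^n\bot\wedge\Dmd(\dots\Dmd(\Box^n\bot\wedge\Dmd\top)\dots)\bigr),
\]
with $k$ diamonds. In a Kripke model this formula holds at $w$ iff there is a path $w=v_0\mathrel{R}v_1\mathrel{R}\dots\mathrel{R}v_k$ such that $v_0,\dots,v_{k-1}$ all satisfy $\Box^n\bot$. For $k=n$ this is impossible, because $\Box^n\bot$ at $v_0$ forbids any $R$-path of length $n$ starting at $v_0$. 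So $\mathrm{K}\vdash\neg\tau_{\alpha_n}(\Dmd^n\top)$. Since $\tau_{\alpha_n}$ commutes with Boolean connectives and $\Box^n\bot$ is, up to $\mathrm{K}$-equivalence, the negation of $\Dmd^n\top$, I expect $\Box^n\bot\in\tau_{\alpha_n}^{-1}\mathrm{K}\subseteq\Lambda_n$. One small point needs checking here: the paper treats $\Box$ through duality, so I should verify that $\tau_{\alpha_n}$ applied to the negation-normal form of $\Box^n\bot$ is $\mathrm{K}$-equivalent to $\neg\tau_{\alpha_n}(\Dmd^n\top)$. This holds because $\tau$ respects equivalent replacement and negation.

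Second, I would show $\Box^{n-1}\bot\notin\Lambda_n$, which amounts to $\Lambda\nvdash\neg\tau_{\alpha_n}(\Dmd^{n-1}\top)$. Take the strict linear order on $n$ points, $0<1<\dots<n-1$. It is a finite irreflexive transitive linear frame, so it validates $\mathrm{GL.3}$ and hence $\Lambda$, since $\Lambda\subseteq\mathrm{GL.3}$. Every point of this frame satisfies $\Box^n\bot$, because the longest path has length $n-1$. The path $0<1<\dots<n-1$ has length $n-1$, so $\tau_{\alpha_n}(\Dmd^{n-1}\top)$ is true at $0$. Thus $\neg\tau_{\alpha_n}(\Dmd^{n-1}\top)$ fails in a $\Lambda$-frame and is not in $\Lambda$.

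There is no real obstacle in this argument. The main care is in the bookkeeping of the translation: it maps variables $p_k\mapsto p_{2k}$, although here only variable-free formulas are used, and it handles $\Box$ and negation through duality. The upper bound $\mathrm{GL.3}$ is used exactly once, to guarantee that finite strict chains are $\Lambda$-frames. This is also why the statement is phrased for $\Lambda\subseteq\mathrm{GL.3}$.
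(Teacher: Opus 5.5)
Your proof is correct and is essentially the paper's argument. The paper uses $\alpha_n = \Dmd p\wedge\Box^{n+1}\bot$, which is your family up to an index shift, and notes that $\tau_{\alpha_n}\Dmd^k\top\sim_{\mathrm{K}}\Dmd^k\top\wedge\Box^{n+1}\bot$. It then separates the logics by which formulas $\Box^k\bot$ they contain, using $\mathrm{GL.3}$ exactly as you do to see that this formula is satisfiable for $k\le n$; your path semantics and explicit $n$-point chain simply spell out those two steps.
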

  \begin{proof}
    Let $\alpha_n:=\Diamond p\land\Box^{n+1}\bot$ for $n\in\omega$.
    Clearly, $\alpha_n$ are normal $\Lambda$-additive, 
    whence the logics $\tau_{\alpha_n}^{-1}\Lambda$ are normal.
    It remains to show that these logics do not coincide for distinct $n\in\omega$.

    By induction on $k\in\omega$, one can verify that
    $\tau_{\alpha_n}\Diamond^k\top\sim_{\rm K}\Diamond^k\top\land\Box^{n+1}\bot$.
    Then notice that $\Diamond^k\top\land\Box^{n+1}\bot\sim_{\rm K}\bot$ for $k>n$
    and $\Diamond^k\top\land\Box^{n+1}\bot\nsim_{\rm GL.3}\bot$ for $k\leq n$.
    Thus, $\tau_{\alpha_n}^{-1}\Lambda\vdash\Box^k\bot$ iff $k>n$.
  \end{proof}
  The following properties of modal-to-modal translations are straightforward:
  \begin{lemma}
      \label{l:interp-triv}
      For every logic $\Lambda$ and formulas $\alpha, \beta\in{\rm Fm}(p,\vec r)$, $\varphi\in{\rm Fm}$,
      \begin{enumerate}
          \item $\tau_\alpha\tau_\beta\varphi=\tau_{\tau_\alpha\beta}\varphi$,
          whence $\tau_\beta^{-1}\tau_\alpha^{-1}\Lambda=\tau_{\tau_\alpha\beta}^{-1}\Lambda$;
          \item $\tau_{\Diamond p}\varphi=\varphi$, whence $\tau_{\Diamond p}^{-1}\Lambda=\Lambda$;
          \item $\tau^{-1}_p\Lambda={\rm Triv}$ and $\tau^{-1}_\bot\Lambda={\rm Ver}$.
      \end{enumerate}
  \end{lemma}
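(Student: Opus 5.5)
This lemma is proved by structural induction on $\varphi$, with the three items handled in order.

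For item 1, we show $\tau_\alpha\tau_\beta\varphi=\tau_{\tau_\alpha\beta}\varphi$ by induction on $\varphi$. Here $\tau_\alpha\beta$ means the result of applying $\tau_\alpha$ to the formula $\beta(p,\vec r)$. Some care is needed with the renaming of variables. I read $\tau_\alpha$ applied to $\beta$ as acting on $\beta$ with $p$ and the parameters fixed. Equivalently, I treat $\alpha$ and $\beta$ as templates and write $\alpha(\psi)$ for the substitution of $\psi$ for $p$. Under this reading, parameters are left untouched, since they are not images of variables of $\mathcal L$.

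The induction then runs as follows.
\begin{itemize}
\item For variables, both sides give $p_{2k}$, or the iterated renaming. For the equality to hold literally, the renaming must be read so that it is applied consistently; I would fix this convention explicitly.
\item Boolean connectives commute with both translations.
\item For $\Diamond\psi$, the left side is $\tau_\alpha(\beta(\tau_\beta\psi))$. Since $\tau_\alpha$ is a homomorphism except on $\Diamond$, we have $\tau_\alpha(\beta(\chi))=(\tau_\alpha\beta)(\tau_\alpha\chi)$. This auxiliary claim is proved by an inner induction on $\beta$, where the $\Diamond$ case uses $\tau_\alpha\Diamond\gamma=\alpha(\tau_\alpha\gamma)$. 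The outer induction hypothesis $\tau_\alpha\tau_\beta\psi=\tau_{\tau_\alpha\beta}\psi$ then gives the right side $(\tau_\alpha\beta)(\tau_{\tau_\alpha\beta}\psi)=\tau_{\tau_\alpha\beta}\Diamond\psi$.
\end{itemize}
The consequence for logics follows by unfolding preimages: $\varphi\in\tau_\beta^{-1}\tau_\alpha^{-1}\Lambda$ iff $\tau_\alpha\tau_\beta\varphi\in\Lambda$ iff $\varphi\in\tau^{-1}_{\tau_\alpha\beta}\Lambda$.

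For item 2, an induction on $\varphi$ shows that $\tau_{\Diamond p}$ only renames variables: $\Diamond\psi\mapsto\Diamond\tau\psi$. Modulo the variable convention it is therefore the identity, or at least a renaming. Because logics are closed under substitution, it follows that $\tau^{-1}_{\Diamond p}\Lambda=\Lambda$.

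For item 3, $\tau_p$ erases every $\Diamond$, so $\tau_p\varphi$ is the classical propositional formula obtained by deleting modalities. Hence $\Lambda\vdash\tau_p\varphi$ iff $\tau_p\varphi$ is a tautology; this uses that $\Lambda$ is consistent, as otherwise the preimage would be the inconsistent logic. That condition is exactly $\varphi\in\mathrm{Triv}$, since Triv is the logic of the one-point reflexive frame, where $\Diamond$ is the identity. Similarly, $\tau_\bot$ replaces each $\Diamond\psi$ by $\bot$, matching the one-point irreflexive frame, whose logic is Ver.

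The only real obstacle is the bookkeeping of the variable renaming $p_k\mapsto p_{2k}$ together with the parameters in item 1. I would handle it first by stating precisely how $\tau_\alpha$ acts on the template $\beta$, and then running the auxiliary substitution lemma.
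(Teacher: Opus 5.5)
The paper gives no proof; it only calls the lemma straightforward. Your plan is the natural one: an outer induction on $\varphi$ plus an inner induction on $\beta$ for $\tau_\alpha(\beta(\chi))=(\tau_\alpha\beta)(\tau_\alpha\chi)$, renaming plus closure under substitution for item 2, and modality erasure plus the one-point frames for item 3. Items 2 and 3 are fine, and your remark that item 3 needs $\Lambda$ consistent is a correct refinement of ``every logic''.

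\emph{The gap is the convention you adopt in item 1}, namely that parameters are left untouched ``since they are not images of variables of $\mathcal L$''. In $\tau_\alpha\tau_\beta\varphi$ the outer $\tau_\alpha$ is applied to $\tau_\beta\varphi$, whose odd variables are exactly $\beta$'s parameters. It renames them $p_{2j+1}\mapsto p_{4j+2}$, and this is what keeps $\beta$'s parameters apart from $\alpha$'s. Without it, item 1 is false. Take $\alpha=\neg r\wedge\Diamond p$ and $\beta=r\wedge\Diamond p$ with the same $r=p_1$.
\begin{itemize}
\item Under your reading, $\tau_\alpha\beta=r\wedge\neg r\wedge\Diamond p\sim_{\mathrm{K}}\bot$, so $\Box\bot\in\tau^{-1}_{\tau_\alpha\beta}\mathrm{K}$.
\item But $\tau_\alpha\tau_\beta\Box\bot=\neg(p_2\wedge\neg p_1\wedge\Diamond\top)$ is not $\mathrm{K}$-valid, so $\Box\bot\notin\tau_\beta^{-1}\tau_\alpha^{-1}\mathrm{K}$.
\end{itemize}
Nor can you make $\tau_\alpha$ itself fix odd variables. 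For $\alpha=r\wedge\Diamond p$, the preimage $\tau_\alpha^{-1}\mathrm{K}$ would then contain $\Diamond\top\to p_1$ but not its substitution instance $\Diamond\top\to p_0$, so it would not be a logic. Your variable base case is also off: the left-hand side yields $p_{4k}$, not $p_{2k}$.

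The repair is to keep the paper's renaming. Let $\tau_\alpha\beta$ be the literal image of $\beta$: $p_0$ stays $p_0$, and $\beta$'s parameters become $p_{4j+2}$, which are then treated as parameters of $\tau_\alpha\beta$.
\begin{itemize}
\item Your inner induction now gives $\tau_\alpha(\beta(\chi))=(\tau_\alpha\beta)(\tau_\alpha\chi)$ literally; the $\Diamond$ case uses that $\alpha$'s parameters are distinct from $p_0$.
\item Your outer induction gives $\tau_\alpha\tau_\beta\varphi=\tau_{\tau_\alpha\beta}\varphi$ on the nose if the right-hand translation sends $p_k\mapsto p_{4k}$.
\item With the standard $p_k\mapsto p_{2k}$, first move the $p_{4j+2}$ to fresh odd indices, so they do not clash with the images $p_{4j+2}$ of $\varphi$'s odd variables. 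Then the two sides differ by an injective renaming of variables.
\end{itemize}
The equality of logics then follows from closure of $\Lambda$ under substitution in both directions, exactly as in your item 2. Your preimage unfolding in item 1 currently skips this step.
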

  From Corollary~\ref{c:S5-add} and Lemma~\ref{l:interp-triv}, we immediately obtain
  \begin{proposition}
      \label{p:S5-interp}
      The following three normal logics, and only they, are interpretable in $\rm S5$ without parameters:
      $\rm Ver$, $\rm Triv$, and $\rm S5$.
  \end{proposition}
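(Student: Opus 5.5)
The statement concerns normal logics interpretable in $\rm S5$ by modal-to-modal translations without parameters. These are exactly the logics $\tau_\alpha^{-1}\mathrm{S5}$ with $\alpha\in{\rm Fm}(p)$ normal and $\mathrm{S5}$-additive. The plan has two parts: show that only the three listed logics arise, and show that each of them does arise.

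\emph{Only these logics arise.} First observe that $\tau_\alpha^{-1}\mathrm{S5}$ depends only on the $\mathrm{S5}$-equivalence class of $\alpha$. If $\alpha\sim_{\mathrm{S5}}\alpha'$, then an induction on $\varphi$ gives $\tau_\alpha\varphi\sim_{\mathrm{S5}}\tau_{\alpha'}\varphi$. The only nontrivial step is the one for $\Dmd$, which uses substitution and equivalent replacement in $\mathrm{S5}$. By Corollary~\ref{c:S5-add}, every normal $\mathrm{S5}$-additive parameter-free formula is $\mathrm{S5}$-equivalent to $\bot$, $p$, or $\Dmd p$. So every such interpretation interprets one of $\tau_\bot^{-1}\mathrm{S5}$, $\tau_p^{-1}\mathrm{S5}$, or $\tau_{\Dmd p}^{-1}\mathrm{S5}$.

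\emph{Identifying the three logics.} By Lemma~\ref{l:interp-triv}, these are ${\rm Ver}$, ${\rm Triv}$, and $\mathrm{S5}$ respectively. Hence the set of normal logics interpretable in $\mathrm{S5}$ without parameters is exactly $\{{\rm Ver},{\rm Triv},\mathrm{S5}\}$. The words ``and only they'' follow, and each of the three is indeed realized.

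\emph{Distinctness.} The three logics are pairwise distinct. ${\rm Ver}\vdash\Box\bot$, while the other two do not prove it. ${\rm Triv}\vdash p\to\Box p$, while $\mathrm{S5}$ does not prove it. This gives exactly three logics.

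There is no real obstacle here. The only point needing care is the well-definedness of $\tau_\alpha^{-1}$ up to equivalence, which is a routine induction.
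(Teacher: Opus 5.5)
Your proof is correct and matches the paper, which derives the proposition immediately from Corollary~\ref{c:S5-add} (up to $\mathrm{S5}$-equivalence, the only normal parameter-free $\mathrm{S5}$-additive formulas are $\bot$, $p$, $\Dmd p$) together with Lemma~\ref{l:interp-triv}. You also verify two points the paper leaves implicit, both correctly: that $\tau_\alpha^{-1}\mathrm{S5}$ depends only on the $\mathrm{S5}$-equivalence class of $\alpha$, and that $\mathrm{Ver}$, $\mathrm{Triv}$, and $\mathrm{S5}$ are pairwise distinct.
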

  For what follows, we need to describe the semantics of the interpreted logics in terms of that of the interpreting logic.
  It is convenient to work with modal algebras for this purpose.
  Basic definitions and facts can be found, for example, in \cite[Section~7]{ChZa97}. 
  Note, however, that in the present paper we treat a modal algebra $\mathfrak{A}$ 
  as a Boolean algebra $\mathfrak{A}^\circ$ equipped with an operator $\Diamond$ rather than $\Box$.
  Also, $\Dmd$ can be an arbitrary operator, possibly not normal and additive;
  whence ${\rm Log\,}\mathfrak{A}$ can be an arbitrary congruential logic.
  For every formula $\alpha(p, \vec r)$, consider the operator $\alpha_\mathfrak{A}:A^{1+n}\to A$
  defined in the same way as in the case of Kripke frames 
  (namely, $\alpha_\mathfrak{A}(U,\vec V)=\vartheta(\alpha)$ whenever $\vartheta(p)=U$ and $\vartheta(\vec r)=\vec V$).
  Let $\sigma_\alpha\mathfrak{A}$ denote the class of algebras
  $\{(\mathfrak{A}^\circ, \alpha_\mathfrak{A}(\cdot, \vec V))\mid \vec V\in A^n\}$.
  It is easy to verify that $\tau_\alpha^{-1}{\rm Log\,}\mathfrak{A}={\rm Log\,}\sigma_\alpha\mathfrak{A}$.
  Since every logic is complete with respect to algebraic semantics, $\sigma_\alpha$ provides an equivalent
  view of modal-to-modal interpretations.

  Recall that each Kripke frame $\mathcal{F}=(W, R)$ corresponds to the algebra $\mathfrak{A}=(\mathfrak{P}(W), R^{-1})$,
  where $\mathfrak{P}(W)$ is the Boolean algebra of subsets of $W$, 
  in the sense that valuations (and hence all semantical notions) on $\mathcal{F}$ and $\mathfrak{A}$ coincide.
  Conversely, if an algebra $\mathfrak{A}$ is of the form $(\mathfrak{P}(W), \Diamond)$ with $\Diamond$ completely additive, 
  then the frame $\mathcal{F}=(W, R_\Diamond)$, where $w\mathrel{R_{\Diamond}} v:\Leftrightarrow w \in\Diamond\{v\}$, 
  corresponds to $\mathfrak{A}$ in the same sense.

  For interpretations, this correspondence yields the following:
  if $\alpha$ is completely additive in a Kripke frame $\mathcal{F}=(W, R)$, 
  then the class of algebras $\sigma_\alpha(\mathfrak{P}(W),R^{-1})$ corresponds to the class of Kripke frames
  $\sigma_\alpha\mathcal{F} := \{(W, R_{\alpha_\mathfrak{A}(\cdot, \vec V)})\mid \vec V\in A^n\}$.
  Therefore, $\tau_\alpha^{-1}{\rm Log\,}\mathcal{F}={\rm Log\,}\sigma_\alpha\mathcal{F}$.
  Similarly, if $\alpha$ is completely additive in a class of Kripke frames $\mathcal{C}$,
  then $\tau_\alpha^{-1}{\rm Log\,}\mathcal{C}={\rm Log\,}\sigma_\alpha\mathcal{C}$,
  where $\sigma_\alpha\mathcal{C}=\bigcup_{\mathcal{F}\in\mathcal{C}}\sigma_\alpha\mathcal{F}$.
  This immediately gives us the following:
  \begin{proposition}
      \label{p:interp-sem}
      Suppose that a normal modal logic $\Lambda$ is interpretable in a normal modal logic $\Lambda_0$ with the finite model property.
      Then $\Lambda$ has the finite model property, namely $\Lambda={\rm Log\,}\sigma_\alpha\mathcal{C}$,
      where $\alpha$ is a formula defining interpretation and $\mathcal{C}$ is a class of finite Kripke frames 
      such that ${\rm Log\,}\mathcal{C}=\Lambda_0$.
  \end{proposition}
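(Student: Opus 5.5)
The plan is to pass through the algebraic reformulation just given and then use the finiteness of $\mathcal{C}$ to turn it back into Kripke semantics. Let $\alpha\in{\rm Fm}(p,\vec r)$ be the formula defining the interpretation, so that $\Lambda=\tau_\alpha^{-1}\Lambda_0$. Since $\Lambda$ is normal, the formulas $\Dmd(p\lor q)\leftrightarrow\Dmd p\lor\Dmd q$ and $\Dmd\bot\leftrightarrow\bot$ belong to $\Lambda$. Hence their $\tau_\alpha$-images belong to $\Lambda_0$, which means exactly that $\alpha$ is additive and normal in $\Lambda_0$ (with the renaming of variables to even indices being harmless, by substitution). Because $\Lambda_0$ has the finite model property, we may fix a class $\mathcal{C}$ of finite Kripke frames with ${\rm Log\,}\mathcal{C}=\Lambda_0$.

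The first key step is to show that $\alpha$ is completely additive in $\mathcal{C}$. By Proposition~\ref{p:add-c}, $\alpha$ is additive and normal in $\mathcal{C}$. So for every $\mathcal{F}=(W,R)\in\mathcal{C}$ and every $\vec V\in\mathcal{P}(W)^n$, the map $\alpha_\mathcal{F}(\cdot,\vec V)$ preserves finite unions, including the empty one. Since $W$ is finite, every family of subsets of $W$ has only finitely many distinct members, so its union is a finite union. Therefore complete additivity follows.

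Next we apply the correspondence described above. For $\mathfrak{A}=(\mathfrak{P}(W),R^{-1})$, each operator $\alpha_\mathfrak{A}(\cdot,\vec V)$ is completely additive. Hence the algebra $(\mathfrak{P}(W),\alpha_\mathfrak{A}(\cdot,\vec V))$ is the complex algebra of the frame $(W,R_{\alpha_\mathfrak{A}(\cdot,\vec V)})$, whose worlds are the same as those of $\mathcal{F}$ and so form a finite set. This gives
\[
  \Lambda=\tau_\alpha^{-1}{\rm Log\,}\mathcal{C}=\bigcap_{\mathcal{F}\in\mathcal{C}}\tau_\alpha^{-1}{\rm Log\,}\mathcal{F}=\bigcap_{\mathcal{F}\in\mathcal{C}}{\rm Log\,}\sigma_\alpha\mathcal{F}={\rm Log\,}\sigma_\alpha\mathcal{C}.
\]
Here $\tau_\alpha^{-1}$ commutes with intersections, and the third equality is the identity $\tau_\alpha^{-1}{\rm Log\,}\mathfrak{A}={\rm Log\,}\sigma_\alpha\mathfrak{A}$ combined with the frame–algebra correspondence. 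Since $\sigma_\alpha\mathcal{C}$ consists of finite frames, $\Lambda$ has the finite model property.

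The only point needing care, and the main obstacle, is that the correspondence between $(\mathfrak{P}(W),f)$ and $(W,R_f)$ requires $f$ to be completely additive. In general frames this fails, as Proposition~\ref{p:s4-dbd} shows, and this is exactly why finiteness of $\mathcal{C}$ is used. We also need to verify the identity $\tau_\alpha^{-1}{\rm Log\,}\mathfrak{A}={\rm Log\,}\sigma_\alpha\mathfrak{A}$ with parameters. This goes by a routine induction: a valuation $\vartheta$ on $\mathfrak{A}$ gives $\vartheta(\tau_\alpha\varphi)=\vartheta'(\varphi)$ in the algebra with $\vec V=\vartheta(\vec r)$, where $\vartheta'(p_k)=\vartheta(p_{2k})$. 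Conversely, every pair consisting of a valuation and a $\vec V$ arises this way, because the parameters have odd indices and are disjoint from the even-indexed images of variables.
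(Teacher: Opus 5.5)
Your proposal is correct and follows essentially the same route as the paper, which derives the proposition from the preceding discussion: if $\alpha$ is completely additive on $\mathcal{C}$, then $\tau_\alpha^{-1}{\rm Log\,}\mathcal{C}={\rm Log\,}\sigma_\alpha\mathcal{C}$. You also spell out the two steps the paper leaves implicit: normality of $\Lambda$ forces $\alpha$ to be additive and normal in $\Lambda_0$, and on finite frames this yields complete additivity.
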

  For a tree skeleton $S$, a non-empty set $T$, $v, u\in S$, and $i, j\in T$, we put 
  $v\leq'_S u$ iff $v\leq u$ and $u$ is maximal in $(S, \leq)$ and
  $(v, i)\preceq'_S(u,j)\Leftrightarrow v \leq'_S u$
  (compare with $R'$ from the proof of Proposition~\ref{p:s4-dbd}).
  \begin{proposition}
      The following five normal logics, and only they, are interpretable in $\rm Grz$ without parameters:
      $\rm Ver$, $\rm Triv$, $\rm Grz$, ${\rm Log\,}(S_2,{\leq'_{S_2}})$, and ${\rm Log\,}(S_2,{\leq})$,
      where $S_2 = \{\langle\rangle\}\cup\{\langle k\rangle\mid k\in\omega\}$.
  \end{proposition}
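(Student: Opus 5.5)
By Theorem~\ref{t:Grz}, every normal modal logic interpretable in $\rm Grz$ without parameters has the form $\tau_\alpha^{-1}{\rm Grz}$ for one of the five normal $\rm Grz$-additive formulas $\alpha\in\{\bot, p, \Dmd p, \Dmd\Box p, \Dmd\Box p\vee p\}$. Equivalent formulas give the same translation up to equivalence, so it suffices to compute these five logics and check that they are pairwise distinct. Lemma~\ref{l:interp-triv} settles three of them at once: $\tau_\bot^{-1}{\rm Grz}={\rm Ver}$, $\tau_p^{-1}{\rm Grz}={\rm Triv}$, and $\tau_{\Dmd p}^{-1}{\rm Grz}={\rm Grz}$. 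The work is in identifying the logics given by $\alpha=\Dmd\Box p$ and $\beta=\Dmd\Box p\vee p$.

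For these two formulas I will use the semantic description via $\sigma_\alpha$ together with the class $\mathcal{C}_{\rm Grz}$ of finite trees. On a finite $\rm Grz$-frame $(S,\leq)$, the operator $(\Dmd\Box p)$ sends $U$ to the set of points that see a maximal point lying in $U$. Indeed, in a finite partial order $\Box p$ holds at some successor iff $p$ holds at some maximal point above, since every point lies below a maximal one and $\Box p$ at $v$ forces $p$ at every maximal point above $v$. Hence this operator is $(\leq'_S)^{-1}$, which is completely additive, and $\sigma_{\Dmd\Box p}(S,\leq)=\{(S,\leq'_S)\}$. Likewise $\beta$ gives the relation ${\leq'_S}\cup{=}$. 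By Proposition~\ref{p:interp-sem}, $\tau^{-1}_{\Dmd\Box p}{\rm Grz}$ is the logic of all frames $(S,\leq'_S)$ with $S$ a finite tree skeleton, and similarly for $\beta$.

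It then remains to show that this class has the same logic as the single frame $(S_2,\leq'_{S_2})$, where $S_2$ is a root with $\omega$ maximal children. In $(S,\leq'_S)$, each maximal point sees only itself, and each non-maximal point sees exactly the maximal points above it. Take the generated subframe at a point $w$ and map every non-maximal point to the root of $S_2$ and every maximal point injectively to a leaf. For a non-maximal $w$ this is not a p-morphism in general, because different non-maximal points see different sets of leaves. I will go the other way instead: the generated subframe of $(S,\leq'_S)$ at any $w$ is itself a p-morphic image of a generated subframe of $(S_2,\leq'_{S_2})$. For non-maximal $w$, the generated subframe is $w$ together with the maximal points above it, because points seen from $w$ are maximal and see only themselves. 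This is a root with finitely many reflexive leaves, which is a p-morphic image of $S_2$ by collapsing the leaves surjectively. For maximal $w$, the generated subframe is a single reflexive point, which is a generated subframe of $S_2$. This gives ${\rm Log}(S_2,\leq'_{S_2})\subseteq\tau^{-1}_{\Dmd\Box p}{\rm Grz}$. For the converse inclusion, $(S_2,\leq'_{S_2})$ has the same logic as its finite versions (root plus $n$ leaves), since any refuting valuation uses finitely many leaf-types, and these finite versions arise as $(S,\leq'_S)$ for depth-one trees $S$.

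The same argument works for $\beta$, giving the frame $(S_2,\leq)$ with a reflexive root. These steps of reducing to $S_2$ are where I expect the main care to be needed, in particular checking the p-morphisms and generated subframes precisely. Finally, the five logics are distinct: ${\rm Ver}$ and ${\rm Triv}$ are obviously different from each other and from the rest. The frame $(S_2,\leq'_{S_2})$ is non-reflexive, so it refutes $\Box p\to p$, which $(S_2,\leq)$ and $\rm Grz$ validate. Both $S_2$-logics contain the depth-$2$ formula bounding height, which $\rm Grz$ does not.
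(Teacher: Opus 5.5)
Your proposal is correct and follows the paper's route. You reduce to the five formulas of Theorem~\ref{t:Grz} and dispose of $\bot$, $p$, $\Dmd p$ by Lemma~\ref{l:interp-triv}. You then compute $\sigma_{\Dmd\Box p}(S,\leq)=\{(S,\leq'_S)\}$ on finite trees, apply Proposition~\ref{p:interp-sem}, and get ${\rm Log}(S_2,\leq'_{S_2})\subseteq\tau^{-1}_{\Dmd\Box p}{\rm Grz}$ from p-morphisms of $S_2$ onto generated subframes, as in Lemma~\ref{l:red}.

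The deviations are minor. For the reverse inclusion you collapse a refuting model on $S_2$ to a finite root-plus-$n$-leaves frame, whereas the paper notes that $\sigma_{\Dmd\Box p}(S_2,\leq)=\{(S_2,\leq'_{S_2})\}$ for the infinite ${\rm Grz}$-frame $(S_2,\leq)$. You redo the argument for $\Dmd\Box p\vee p$ rather than composing translations via Lemma~\ref{l:interp-triv}(1). You also check that the five logics are pairwise distinct, which the paper leaves implicit.
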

  \begin{proof}
      In view of Theorem~\ref{t:Grz}, it is sufficient to check that these five logics are interpreted in $\rm Grz$ 
      by the formulas $\bot$, $p$, $\Diamond p$, $\Diamond\Box p$, and $\Diamond\Box p\lor p$ respectively.
      The first three interpretations follow from Lemma~\ref{l:interp-triv}.

      Let $\Lambda:=\tau^{-1}_\alpha{\rm Grz}$, where $\alpha:=\Diamond\Box p$, $\mathcal{F}_2:= (S_2,\leq'_{S_2})$.
      We need to show that $\Lambda={\rm Log\,}\mathcal{F}_2$.
      Notice that $\Diamond\Box p\sim_{\rm Grz}\Diamond\Box\Diamond p$, whence, 
      according to the proof of Proposition~\ref{p:s4-dbd}, 
      for a $\rm Grz$-frame $\mathcal{F}=(S,\leq)$,
      $\alpha_\mathcal{F}$ is completely additive
      and $\sigma_\alpha\mathcal{F}=\{(S,\leq'_S)\}$.
      In particular, $\sigma_\alpha(S_2,\leq)=\{\mathcal{F}_2\}$, whence $\mathcal{F}_2\vDash\Lambda$.
      For the converse, we will use the following fact, which can be easily derived from the basic properties of
      generated frames and p-morphisms (cf.~\cite[Corollary~3.16]{ChZa97}):
      \begin{lemma}
            \label{l:red}
          Let $\mathcal{C}$ be a class of Kripke frames and $\mathcal{F}_0=(W_0,R_0)$ be a Kripke frame.
          Suppose that, for every $\mathcal{F}=(W,R)\in\mathcal{C}$ and every world $w\in W$
          there is a world $w_0\in W_0$ with a p-morphism 
          from $\mathcal{F}_0\uparrow w_0$ onto $\mathcal{F}\uparrow w$,
          where $\mathcal{F}\uparrow w$ denotes the subframe of $\mathcal{F}$ generated by $w$.
          Then ${\rm Log\,}\mathcal{F}_0\subseteq{\rm Log\,}\mathcal{C}$.
      \end{lemma}
      By Proposition~\ref{p:interp-sem}, $\Lambda={\rm Log\,}\sigma_{\alpha}\mathcal{C}_{\rm Grz}$.
      Let $\mathcal{F}=(S,\leq'_S)\in\sigma_\alpha\mathcal{C}_{\rm Grz}$, $w\in S$.
      If $w$ is maximal, then $\mathcal{F}\uparrow w$ contains only one reflexive point and is isomorphic
      to $\mathcal{F}_2\uparrow\langle0\rangle$.
      Otherwise, $\mathcal{F}\uparrow w$ consists of $w$ and some maximal points $u_0, \dots,u_{n-1}$, $n\geq 1$.
      Then the mapping $\langle\rangle\mapsto w$, $\langle k\rangle\mapsto u_{k {\,\rm mod\,} n}$ is a p-morphism
      from $\mathcal{F}_2$ onto $\mathcal{F}\uparrow w$.
      Thus, by Lemma~\ref{l:red}, ${\rm Log\,}\mathcal{F}_2\subseteq\Lambda$ and
      $\tau_{\Diamond\Box p}^{-1}{\rm Grz}={\rm Log\,}(S,\leq'_S)$.

      For the last interpretation, it suffices to notice that $\sigma_{\Diamond p\lor p}(S_2,\leq'_{S_2})=(S_2,\leq)$, whence
      \[
        {\rm Log\,}(S_2,\leq)=\tau_{\Diamond p\lor p}^{-1}{\rm Log\,}(S_2,\leq'_{S_2})=\tau^{-1}_{\Diamond p\lor p}\tau^{-1}_{\Diamond\Box p}{\rm Grz}=\tau^{-1}_{\Diamond\Box p\lor p}{\rm Grz}
      \]
      by the first item of Lemma~\ref{l:interp-triv}.
  \end{proof}
  It is easy to adopt the above argument to the case of $\rm S4$ and obtain the following characterization:
  \begin{proposition}
      The following five normal logics, and only they, are interpretable in $\rm S4$ without parameters:
      $\rm Ver$, $\rm Triv$, $\rm Grz$, ${\rm Log\,}(\hat S_2,{\preceq'_{S_2}})$, and ${\rm Log\,}(\hat S_2,{\preceq})$,
      where $\hat S_2:=\bigl\{(\langle\rangle, 0)\bigr\}\cup\bigl\{(\langle k\rangle, i)\bigm| k,i\in\omega\bigr\}\subset S_2\times\omega$.
  \end{proposition}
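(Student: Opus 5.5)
By Theorem~\ref{t:S4}, the only normal $\mathrm{S4}$-additive formulas without parameters are, up to $\mathrm{S4}$-equivalence, $\bot$, $p$, $\Dmd p$, $\Dmd\Box\Dmd p$, and $\Dmd\Box\Dmd p\lor p$. So I would compute $\tau_\alpha^{-1}\mathrm{S4}$ for each of these five formulas, following the $\mathrm{Grz}$ argument step by step.

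For $\bot$ and $p$, Lemma~\ref{l:interp-triv} gives $\rm Ver$ and $\rm Triv$. For $\Dmd p$ it gives $\tau_{\Dmd p}^{-1}\mathrm{S4}=\mathrm{S4}$. I therefore read the third logic in the list as $\mathrm{S4}$, since it is $\mathrm{Grz}$ in the analogous statement.

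The main case is $\alpha:=\Dmd\Box\Dmd p$, and I would prove $\tau_\alpha^{-1}\mathrm{S4}={\rm Log\,}(\hat S_2,\preceq'_{S_2})$.

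For frames of bounded depth I would show $\alpha_{\mathcal F}=(R')^{-1}$, where $R'$ is defined as in the proof of Proposition~\ref{p:s4-dbd}. The point is that above every world there is a maximal cluster, and $\Box\Dmd p$ holds in a maximal cluster exactly when that cluster meets $p$. This applies to every frame in $\mathcal C_{\rm S4}$, and also to the infinite frame $(\hat S_2,\preceq)$, which has depth $2$ (its clusters may be infinite, but there is no infinite ascending chain, unlike the counterexample $(\omega,\le)$). Consequently $\alpha$ is completely additive on these frames. In $(S\times T,\preceq)$ the maximal clusters are $\{u\}\times T$ with $u$ maximal, so $\sigma_\alpha(S\times T,\preceq)=\{(S\times T,\preceq'_S)\}$, and likewise $\sigma_\alpha(\hat S_2,\preceq)=\{(\hat S_2,\preceq'_{S_2})\}$.

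Since $(\hat S_2,\preceq)$ is an $\mathrm{S4}$-frame and $\alpha$ is completely additive on it, the correspondence preceding Proposition~\ref{p:interp-sem} gives
\[
  \tau_\alpha^{-1}\mathrm{S4}\subseteq\tau_\alpha^{-1}{\rm Log\,}(\hat S_2,\preceq)={\rm Log\,}(\hat S_2,\preceq'_{S_2}).
\]
For the reverse inclusion, Proposition~\ref{p:interp-sem} gives $\tau_\alpha^{-1}\mathrm{S4}={\rm Log\,}\sigma_\alpha\mathcal C_{\rm S4}$, and I would apply Lemma~\ref{l:red}. Fix a frame $(S\times T,\preceq'_S)$ and a world $w=(v,i)$, and consider two cases.

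\emph{Case 1: $v$ is maximal.} Then $\mathcal F\uparrow w$ is the finite cluster $\{v\}\times T$. It is the image of the infinite cluster $\{\langle0\rangle\}\times\omega\subseteq\hat S_2$ under $(\langle0\rangle,j)\mapsto(v,t_{j\bmod |T|})$, where $T=\{t_0,\dots\}$ is enumerated.

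\emph{Case 2: $v$ is not maximal.} Then $w$ is irreflexive. Because $\preceq'$ only points into maximal clusters, $\mathcal F\uparrow w$ consists of $w$ together with the maximal clusters $\{u_0\}\times T,\dots,\{u_{n-1}\}\times T$ above $v$, where $n\ge1$. I would use the map
\[
  (\langle\rangle,0)\mapsto w,\qquad (\langle k\rangle,j)\mapsto(u_{k\bmod n},t_{j\bmod|T|}).
\]
It is surjective, it sends the irreflexive root to an irreflexive root, and it maps each maximal cluster onto a maximal cluster. Checking the forth and back conditions is then routine.

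The step I expect to need the most care is this $p$-morphism check together with the complete additivity of $\alpha$ on the infinite frame $(\hat S_2,\preceq)$. The latter is needed to transfer validity from $(\hat S_2,\preceq)$ to $(\hat S_2,\preceq'_{S_2})$, and the depth-$2$ argument above is what I would use to establish it.

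For the last formula, observe that the reflexive closure of $\preceq'_{S_2}$ on $\hat S_2$ is exactly $\preceq$. Indeed, the root sees itself and all the maximal points, and within a maximal cluster the two relations already agree. Hence $\sigma_{\Dmd p\lor p}(\hat S_2,\preceq'_{S_2})=\{(\hat S_2,\preceq)\}$, and $\Dmd p\lor p$ is completely additive there. Using $\tau_\alpha(\Dmd p\lor p)=\Dmd\Box\Dmd p\lor p$ and the first item of Lemma~\ref{l:interp-triv}, I get
\[
  {\rm Log\,}(\hat S_2,\preceq)=\tau_{\Dmd p\lor p}^{-1}\tau_{\alpha}^{-1}\mathrm{S4}=\tau_{\Dmd\Box\Dmd p\lor p}^{-1}\mathrm{S4},
\]
which completes the list of five logics.
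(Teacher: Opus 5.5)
Your proof is correct and follows the paper's route: the paper obtains this proposition by adapting its $\mathrm{Grz}$ argument, i.e.\ computing $\sigma_\alpha$ via $(R')^{-1}$, applying Proposition~\ref{p:interp-sem} and Lemma~\ref{l:red} with the same two-case p-morphism, and composing with $\tau_{\Dmd p\lor p}$ via Lemma~\ref{l:interp-triv}. Your depth-$2$ argument for complete additivity of $\Dmd\Box\Dmd p$ on the infinite frame $(\hat S_2,\preceq)$ makes explicit a point the paper leaves implicit, and you are right that the third logic must be $\mathrm{S4}$ rather than $\mathrm{Grz}$, since $\tau_{\Dmd p}^{-1}\mathrm{S4}=\mathrm{S4}$; the ``$\rm Grz$'' in the statement is a typo carried over from the $\mathrm{Grz}$ proposition.
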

   Now consider interpretations with parameters. 
   \begin{example}
      \label{e:wGrz-in-GL}
      Let $\alpha:=\Diamond p\lor (r\land p)$. It is easy to see that, for a Kripke frame $\mathcal{F}=(W,R)$,
      \[
        \sigma_\alpha\mathcal{F}=\{(W, R\cup{\rm id}_V)\mid V\subseteq W\},\quad\text{where }{\rm id}_V:=\{(v, v)\mid v\in V\}.
      \]
      At the same time, it is well known that $\rm wGrz$-frames can be obtained from $\rm GL$-frames precisely by
      this procedure. Thus, by Proposition~\ref{p:interp-sem}, ${\rm wGrz}=\tau_\alpha^{-1}{\rm GL}$.
  \end{example}
  Using the characterization of normal $\rm GL$-additive formulas,
  one can show that $\rm wGrz$ is not interpretable in $\rm GL$ without parameters, as
  no disjunction of non-parametric $\delta$-formulas can define such an interpretation.
  While this claim is fairly intuitive, the proof we are aware of is not entirely straightforward.

  In Section~\ref{s:intro}, it was noted  that $\tau_{\Dmd p\lor p}$ defines interpretations of $\rm KT$, $\rm S4$, and $\rm Grz$
  in $\rm K$, $\rm K4$, and $\rm GL$. 
  At the same time, non-parametric interpretations in the reverse direction do not exist,
  since $\rm K$, $\rm K4$, and $\rm GL$ contain non-trivial variable-free formulas while
  $\rm KT$, $\rm S4$, and $\rm Grz$ do not (see~\cite[Lemma~5.1]{Zol00}).
  The situation changes if we allow parameters:
  \begin{example}
      \label{e:K-in-KT}
      \newcommand{\barLe}[1]{\mkern 3.mu\overline{\mkern-3.mu#1\mkern-1.5mu}\mkern 1.5mu}
      \newcommand{\barF}[1]{\mkern 5.5mu\overline{\mkern-5.5mu#1\mkern-1.5mu}\mkern 1.5mu}
      Let $\alpha:=\bigl(\neg r\land\Diamond (r\land p)\bigr)\lor \bigl(r\land\Diamond (\neg r\land p)\bigr)$.
      We will show that ${\rm K}=\tau^{-1}_\alpha\rm KT$.
      Recall that $\rm KT$ is the logic of the class of frames
      $\mathcal{C}_{\rm KT}=\{(S,\barLe\lessdot)\mid(S,\lessdot)\in\mathcal{C}_{\rm K}\}$,
      where $\barLe\lessdot$ is the reflexive closure of $\lessdot$.
      By Proposition~\ref{p:interp-sem}, $\tau_\alpha^{-1}{\rm KT}={\rm Log\,}\sigma_\alpha\mathcal{C}_{\rm KT}$.
      We claim that $\mathcal{C}_{\rm K}\subseteq\sigma_\alpha\mathcal{C}_{\rm KT}$, whence ${\rm K}\supseteq\tau_\alpha^{-1}{\rm KT}$
      (the converse is clear).
      Indeed, every $\mathcal{F}=(S,\lessdot)\in\mathcal{C}_{\rm K}$ can be written as
      $(S,R_{\alpha_{\barF{\mathcal{F}}}(\cdot,V)})$, where 
      $\barF{\mathcal{F}}:=(S,\barLe\lessdot)\in\mathcal{C}_{\rm KT}$ and $V$ consists of all sequences $v\in S$ of even length.
      
      In a similar vein, one can show that $\tau_\beta$, where
      $\beta:=\bigl(\neg r\land\Diamond (r\land\Dmd p)\bigr)\lor \bigl(r\land\Diamond (\neg r\land\Dmd p)\bigr)$,
      defines interpretations of $\rm K4$ in $\rm S4$ and of $\rm GL$ in $\rm Grz$.
  \end{example}
  The role of parameters is also clear in the following example (cf.\ Proposition~\ref{p:S5-interp} and~\cite[Theorem~4.21]{Zol00}).
  \begin{proposition}
      \label{p:S5-par-inf}
      Infinitely many non-equivalent normal logics are interpretable in $\rm S5$ with parameters.
  \end{proposition}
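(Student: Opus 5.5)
We will exhibit an explicit family of parametric formulas $\alpha_n\in{\rm Fm}(p,\vec r)$, each normal $\mathrm{S5}$-additive, and show that the logics $\tau_{\alpha_n}^{-1}\mathrm{S5}$ are pairwise distinct. By the results of Section~\ref{s:S5}, every formula of the form $\bigvee_{i<k}\bigl(\mu_i\wedge\Dmd(\nu_i\wedge p)\bigr)$ with $\mu_i,\nu_i\in{\rm Fm}(\vec r)$ is normal $\mathrm{S5}$-additive, so the logics $\tau_{\alpha}^{-1}\mathrm{S5}$ for such $\alpha$ are normal. Concretely, we take one parameter $r$ and
\[
  \alpha \defeq \bigl(\neg r\wedge\Dmd(r\wedge p)\bigr)\vee\bigl(r\wedge\Dmd(\neg r\wedge p)\bigr),
\]
as in Example~\ref{e:K-in-KT}. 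Since $\alpha$ is completely additive in every $\mathrm{S5}$-frame $(W,W^2)$, Proposition~\ref{p:interp-sem} and the discussion preceding it give $\tau_\alpha^{-1}\mathrm{S5}={\rm Log}\,\sigma_\alpha\mathcal{C}_\mathrm{S5}$. Here $\sigma_\alpha(W,W^2)$ consists of the complete bipartite frames $K_{V,W\setminus V}$ (irreflexive), for $V\subseteq W$.

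To get infinitely many logics, we vary the formula. We use $n$ parameters $r_1,\dots,r_n$, or one parameter combined through the $\delta$-structure, to encode a ``colouring'' of the cluster into $n$ classes. We let $\alpha_n$ relate $w$ to $v$ iff $w$ and $v$ receive different colours:
\[
  \alpha_n\defeq\bigvee_{i\neq j}\bigl(\chi_i\wedge\Dmd(\chi_j\wedge p)\bigr),
\]
where $\chi_1,\dots,\chi_n$ are pairwise exclusive, jointly exhaustive Boolean combinations of the parameters. Then $\sigma_{\alpha_n}\mathcal{C}_\mathrm{S5}$ is exactly the class of finite complete multipartite graphs with at most $n$ parts, viewed as irreflexive symmetric frames.

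To separate the logics, we find a formula valid in all frames with at most $n$ parts but refuted in one with $n+1$ parts. The natural choice is the variable-free statement ``there are no $n+1$ pairwise adjacent distinct points'', which is not directly expressible. We can instead use variables: in frames with at most $n$ parts, any $n+1$ disjoint sets that each contain a point, with points in distinct sets pairwise related, are impossible by pigeonhole. For example, take the formula asserting that $p_1,\dots,p_{n+1}$ are pairwise disjoint and that at $w$ we have
\[
  p_1\wedge\bigwedge_{i\le n+1}\Dmd p_i\wedge\bigwedge_{i\neq j}\Box(p_i\to\Dmd p_j)
\]
(in the $\sigma$-frame). This is adjusted to handle $w$ itself, since a complete multipartite frame makes every point see all points outside its part. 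The formula's negation is valid in all frames with at most $n$ parts but fails in the $(n+1)$-clique.

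The main obstacle is getting the separating formula exactly right. Points in the same part do not see each other, but two points in different parts have the same successors outside both parts. We expect to need the refined version that forces each $p_i$ to be a singleton-like set of pairwise adjacent points. The computation of $\sigma_{\alpha_n}$ is routine.
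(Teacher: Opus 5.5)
\textbf{Verdict: genuine gap.} Your setup is sound, and it differs from the paper's. Each $\alpha_n=\bigvee_{i\neq j}\bigl(\chi_i\wedge\Dmd(\chi_j\wedge p)\bigr)$ is a disjunction of $\delta$-formulas, hence normal $\mathrm{S5}$-additive. The class $\sigma_{\alpha_n}\mathcal{C}_{\mathrm{S5}}$ is indeed the class of finite irreflexive complete multipartite frames with at most $n$ parts. So the logics $L_n:=\tau_{\alpha_n}^{-1}\mathrm{S5}$ form a decreasing chain, and it suffices to show $L_{n+1}\neq L_n$. The paper instead obtains a root cluster seeing at most $n$ maximal clusters and identifies the logic exactly as ${\rm Log}\,\mathcal{F}_{3,n}$ via Lemma~\ref{l:red}; you do not need such an identification.

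However, the separation step, which is the whole content of the proposition, is not carried out, and the formula you display does not separate anything. In the irreflexive $(n+1)$-clique $K_{n+1}$, a point $w$ has only $n$ successors. So $\bigwedge_{i\le n+1}\Dmd p_i$ with pairwise disjoint $p_i$ is unsatisfiable at $w$; already $p_1\wedge\Dmd p_1$ fails if $p_1$ is a singleton. Conversely, for $n=2$ your conjunction is satisfiable in $K_{3,3}$, which has only two parts: take parts $\{w,a_2,a_3\}$ and $\{b_1,b_2,b_3\}$, with $p_1=\{w,b_1\}$, $p_2=\{a_2,b_2\}$, $p_3=\{a_3,b_3\}$. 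So its negation is not valid in the $n$-part class, and it is not refuted in $K_{n+1}$. The ``refined version'' is left unspecified.

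The missing idea is to exploit non-adjacency \emph{within} a part through a condition like $q_i\to\neg\Dmd q_i$, propagated two steps. Two steps suffice because in a multipartite frame with at least two parts every point is reachable in at most two steps. Concretely, put
\[
\Phi:=\bigwedge_{i\le n}\bigl(q_i\to\neg\Dmd q_i\bigr)\wedge\bigwedge_{i\neq j}\bigl(q_i\to\Dmd q_j\bigr).
\]
The formula $\neg\bigl(q_0\wedge\Phi\wedge\Box\Phi\wedge\Box\Box\Phi\bigr)$ is refuted at point $0$ of $K_{n+1}$ when $q_i$ holds exactly at point $i$.

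It is valid on every frame with at most $n$ parts. Suppose the antecedent holds at $w=:x_0$.
\begin{enumerate}
\item By $\Phi$ at $w$, pick successors $x_j\in q_j$ of $w$ for $j\neq 0$. This also rules out the one-part case, where $w$ has no successors.
\item By pigeonhole, some $x_i,x_j$ with $i\neq j$ lie in the same part.
\item By $\Phi$ at $x_i$, there is a successor $y\in q_j$ of $x_i$. It lies outside that part, so it sees $x_j$.
\item Since $y$ is within two steps of $w$, $\Phi$ holds at $y$, contradicting $q_j\to\neg\Dmd q_j$.
\end{enumerate}
With this formula supplied, your route goes through.
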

  \begin{proof}
      For $n>0$, $\vec r=\langle r_l\rangle_{l<n}$, consider the formula 
      $\alpha_n(p,\vec r):=\bigvee_{k\leq n} \bigl((\gamma^k_n\lor\gamma^n_n)\land\Diamond(\gamma^k_n\land p)\bigr)$, where
      \[
        \gamma^k_n(\vec r):=\begin{cases}
            r_k\land\bigwedge_{l<n,l\neq k}\neg r_l &\text{for }k<n,\\
            \bigwedge_{l<n}\neg\gamma^l_n &\text{for k = n}.
        \end{cases}
      \]
      Let $\mathcal{F}_{3,n}:=(S_{3,n}, \preceq)$, where 
      $S_{3,n}:=\{\langle\rangle, \langle0\rangle, \dots,\langle n-1\rangle\}\times\omega$.
      It is easy to show that the logics of $\mathcal{F}_{3,n}$ are distinct: the formula 
      $\bigwedge_{k<m}\Diamond\bigl(q_k\land\bigwedge_{l<m,l\neq k}\neg\Diamond q_l\bigr)\to\bot$
      is valid in $\mathcal{F}_{3,n}$ iff $n<m$.
      
      Let us show that $\tau_{\alpha_n}^{-1}{\rm S5}={\rm Log\,}\mathcal{F}_{3,n}$. 
      It is well known that ${\rm S5}={\rm Log\,}\mathcal{F}_4$, where $\mathcal{F}_4:=(\omega,\omega\times\omega)$.
      By Proposition~\ref{p:interp-sem}, $\tau_{\alpha_n}^{-1}{\rm S5}={\rm Log\,}\sigma_{\alpha_n}\mathcal{F}_4$.
      So, it remains to check that ${\rm Log\,}\mathcal{F}_{3,n}={\rm Log\,}\sigma_{\alpha_n}\mathcal{F}_4$.

      Consider the frame $\mathcal{F}=(\omega,R_{\alpha_n(\cdot,\vec V)})\in\sigma_{\alpha_n}\mathcal{F}_4$,
      where $\vec V:=\langle V_k\rangle_{k<n}$, $V_k:=\{i\in\omega\mid i\,{\rm mod}\,(n+1)=k\}$ for $k<n$.
      Let $V_n:=\omega\setminus\bigcup_{k<n}V_k$.
      Notice that, if $\vartheta(\vec r)=\vec V$, then $\vartheta(\gamma_n^k)=V_k$ for $k\leq n$,
      whence 
      \[
        i \mathrel{R_{\alpha_n(\cdot,\vec V)}} j \;\Leftrightarrow\;
        i\in(\alpha_n)_{\mathcal{F}_4}(\{j\},\vec V) \;\Leftrightarrow\;
        \exists{k\leq n}\,(i\in V_k\cup V_n \land j\in V_k).
      \]
      Therefore, $\mathcal{F}\cong\mathcal{F}_{3,n}$, where the isomorphism maps 
      $i$ to $(\langle k\rangle, s)$ if $i=s(n+1)+k$, $s\in\omega$, $k<n$
      and to $(\langle\rangle, s)$ if $i=s(n+1)+n$, $s\in\omega$.
      Hence, ${\rm Log\,}\mathcal{F}_{3,n}\supseteq{\rm Log\,}\sigma_{\alpha_n}\mathcal{F}_4$.

      To prove the converse inclusion, by Lemma~\ref{l:red}, it suffices to show that, 
      for each $\mathcal{F}\in\sigma_{\alpha_n}\mathcal{F}_4$
      and $w\in\omega$, there is $w_0\in S_{3,n}$ such that $\mathcal{F}\uparrow w$ is a p-morphic 
      image of $\mathcal{F}_{3,n}\uparrow w_0$.
      Let $\vec V=\langle V_k\rangle_{k<n}\in\mathcal{P}(\omega)^n$ be such that $\mathcal{F}=(\omega,R_{\alpha_n(\cdot,\vec V)})$,
      $\hat V_k:=V_k\setminus\bigcup_{l<n,l\neq k}V_l$ for $k<n$, and $\hat V_n := \omega\setminus\bigcup_{k<n}\hat V_k$.
      It is easy to see that,
      if $\vartheta(\vec r)=\vec V$, then $\vartheta(\gamma_n^k)=\hat V_k$ for $k\leq n$, whence
      \[
        i \mathrel{R_{\alpha_n(\cdot,\vec V)}} j \;\Leftrightarrow\;
        i\in(\alpha_n)_{\mathcal{F}_4}(\{j\},\vec V) \;\Leftrightarrow\;
        \exists{k\leq n}\,(i\in \hat V_k\cup \hat V_n \land j\in \hat V_k).
      \]
      If $w\in \hat V_k$ for some $k<n$ or $w\in\hat V_n=\omega$, 
      then $\mathcal{F}\uparrow w$ consists of only one cluster 
      (which is $\hat V_k\subseteq\omega$ or $\hat V_n=\omega$)
      and it is a p-morphic image of the infinite cluster $\mathcal{F}_{3,n}\uparrow(\langle 0\rangle,0)$.
      If $w\in \hat V_n\subsetneq\omega$, then $\mathcal{F}\uparrow w=\mathcal{F}$ and $\mathcal{F}$ 
      consists of the root cluster $\hat V_n$,
      which sees at least one and at most $n$ clusters (i.e., non-empty sets among $\hat V_k$, $k<n$).
      One can verify that $\mathcal{F}$ is a p-morphic image of the whole frame $\mathcal{F}_{3,n}$ in this case.
  \end{proof}
\section{Problems for future research}
    \label{s:prob}
  Of course, it would be desirable to obtain a general characterization of additive formulas in
  normal modal logics. However, the results obtained in this paper cast doubt on the possibility of
  such a characterization, even for well-behaved classes of logics (for example, logics corresponding
  to Horn frame conditions or extensions of $\mathrm{S4}$). Therefore, we formulate here only some
  of the most interesting and, at the same time, relatively accessible questions related to the
  characterization of additive formulas in specific logics:
  \begin{enumerate}
    \item Describe all additive formulas with parameters in $\mathrm{S4}$ and $\mathrm{Grz}$, 
          all additive formulas in $\mathrm{K4}$.
    \item Describe all additive formulas in $\mathrm{K4}.3$ and $\mathrm{S4}.3$. Is it true that all
          additive formulas in these logics are equivalent to $p$-positive ones? It is known that
          this is not the case for monotone formulas~\cite[Proposition~4.7]{Dvo26}.
    \item Describe all \emph{biadditive} formulas in $\mathrm{K}$, i.e., formulas $\alpha(p, q)$ such
          that
          \begin{align*}
            \alpha(p \vee p', q) &\sim_\mathrm{K} \alpha(p, q) \vee \alpha(p', q),\\
            \alpha(p, q \vee q') &\sim_\mathrm{K} \alpha(p, q) \vee \alpha(p, q').
          \end{align*}
  \end{enumerate}
  We also pose several questions regarding interpretations of modal logics:
  \begin{enumerate}
      \setcounter{enumi}{3}
      \item How many normal logics are interpretable with parameters in ${\rm S4}$ and $\rm Grz$?
            It seems plausible that there are infinitely many of them in every logic between $\rm K$ and $\rm Grz$ 
            (cf.\ Remark~\ref{r:Grz-par}) and possibly even in every non-tabular normal logic 
            (cf.\ Proposition~\ref{p:S5-par-inf}).
      \item Is it true that every normal logic interpretable in $\mathrm{K}$ ($\mathrm{GL}$,
            $\mathrm{S4}$, etc.) is finitely axiomatizable? If so, what is the complexity of computing
            such an axiomatization from a formula defining the interpretation?
  \end{enumerate}
  As noted after Example~\ref{e:wGrz-in-GL}, the characterization of additive formulas makes 
  it possible to prove the absence of interpretability of a certain modal logic in a given logic 
  by checking all additive formulas. 
  However, this approach is cumbersome. At the same time, it seems quite obvious that, 
  for example, the logics K and K4 are not interpretable in each other, even with parameters. 
  In this regard, it makes sense to try to develop general necessary conditions for interpretability. 
  For example, one might suppose that transitive and non-transitive logics cannot be interpreted in each other.
  However, this claim is obviously wrong. 
  To state a more accurate conjecture, we need an additional notion. 
  A normal modal logic $\Lambda$ is \emph{pretransitive} if $\Lambda\vdash\Diamond^{n+1}p\to\bigvee_{k\leq n}\Diamond^kp$
  for some $n\geq1$.
  \begin{conjecture}
      \label{conj}
      If a normal modal logic $\Lambda$ is interpretable by a modal-to-modal translation (with parameters) 
      in a pretransitive normal logic, then $\Lambda$ is pretransitive.
  \end{conjecture}
  Notice that this conjecture, if it is true, imposes limitations on possible weaker-to-stronger translations 
  discussed in~\cite{Hum06}.
  \begin{enumerate}
      \setcounter{enumi}{5}
      \item Does Conjecture~\ref{conj} hold? What can be said about interpretations of pretransitive logics in 
      ``anti-transitive'' logics like $\rm K$, $\rm KB$, $\rm KT$? 
      What are other necessary conditions for a logic $\Lambda$ to be interpretable in a logic ${\rm \Lambda}_0$
      with parameters?
      Notice that some necessary conditions for interpretations without parameters are given in~\cite[Section~5]{Zol00}.
  \end{enumerate}
\section*{Acknowledgments}
  I would like to thank my supervisor L.~D.~Beklemishev for stimulating discussions and continuous support.
  I am also grateful to the anonymous reviewers for their numerous helpful remarks, which significantly improved the paper.
\bibliographystyle{eptcs}
\bibliography{generic}

\begin{thebibliography}{10}
\providecommand{\bibitemdeclare}[2]{}
\providecommand{\surnamestart}{}
\providecommand{\surnameend}{}
\providecommand{\urlprefix}{Available at }
\providecommand{\url}[1]{\texttt{#1}}
\providecommand{\href}[2]{\texttt{#2}}
\providecommand{\urlalt}[2]{\href{#1}{#2}}
\providecommand{\doi}[1]{doi:\urlalt{https://doi.org/#1}{#1}}
\providecommand{\eprint}[1]{arXiv:\urlalt{https://arxiv.org/abs/#1}{#1}}
\providecommand{\bibinfo}[2]{#2}

\bibitemdeclare{inproceedings}{Aba86}
\bibitem{Aba86}
\bibinfo{author}{Merab~A. \surnamestart Abashidze\surnameend}
  (\bibinfo{year}{1986}): \emph{\bibinfo{title}{The Iterativity Property in
  Provability Logics}}.
\newblock In: {\slshape \bibinfo{booktitle}{Abstracts of the 8th All-Union
  Conference on Mathematical Logic}}, \bibinfo{address}{Moscow},
  p.~\bibinfo{pages}{4}.
\newblock \bibinfo{note}{In Russian}.

\bibitemdeclare{article}{Benth98}
\bibitem{Benth98}
\bibinfo{author}{Johan \surnamestart van Benthem\surnameend}
  (\bibinfo{year}{1998}): \emph{\bibinfo{title}{Program constructions that are
  safe for bisimulation}}.
\newblock {\slshape \bibinfo{journal}{Studia Logica}}
  \bibinfo{volume}{60}(\bibinfo{number}{2}), pp. \bibinfo{pages}{311--330},
  \doi{10.1023/A:1005072201319}.

\bibitemdeclare{misc}{Benth26}
\bibitem{Benth26}
\bibinfo{author}{Johan \surnamestart van Benthem\surnameend},
  \bibinfo{author}{Balder \surnamestart ten Cate\surnameend} \&
  \bibinfo{author}{Xi~\surnamestart Yang\surnameend} (\bibinfo{year}{2026}):
  \emph{\bibinfo{title}{When do modal definability and preservation theorems
  transfer to the finite?}}, \doi{10.48550/arXiv.2603.12171}.
\newblock \eprint{2603.12171}.

\bibitemdeclare{misc}{BCI25}
\bibitem{BCI25}
\bibinfo{author}{Nick \surnamestart Bezhanishvili\surnameend},
  \bibinfo{author}{Balder \surnamestart ten Cate\surnameend} \&
  \bibinfo{author}{Rosalie \surnamestart Iemhoff\surnameend}
  (\bibinfo{year}{2025}): \emph{\bibinfo{title}{Six proofs of interpolation for
  the modal logic {K}}}, \doi{10.48550/arXiv.2510.16398}.
\newblock \eprint{2510.16398}.

\bibitemdeclare{book}{ChZa97}
\bibitem{ChZa97}
\bibinfo{author}{Alexander \surnamestart Chagrov\surnameend} \&
  \bibinfo{author}{Michael \surnamestart Zakharyaschev\surnameend}
  (\bibinfo{year}{1997}): \emph{\bibinfo{title}{Modal Logic}}.
\newblock {\slshape \bibinfo{series}{Oxford Logic
  Guides}}~\bibinfo{volume}{35}, \bibinfo{publisher}{Clarendon Press},
  \bibinfo{address}{Oxford}, \doi{10.1093/oso/9780198537793.001.0001}.

\bibitemdeclare{article}{Dvo24}
\bibitem{Dvo24}
\bibinfo{author}{Lev~V. \surnamestart Dvorkin\surnameend}
  (\bibinfo{year}{2024}): \emph{\bibinfo{title}{On provability logics of
  {Niebergall} arithmetic}}.
\newblock {\slshape \bibinfo{journal}{Izvestiya Rossiiskoi Akademii Nauk.
  Seriya Matematicheskaya}} \bibinfo{volume}{88}(\bibinfo{number}{3}), pp.
  \bibinfo{pages}{468--505}, \doi{10.4213/im9524e}.

\bibitemdeclare{misc}{Dvo26}
\bibitem{Dvo26}
\bibinfo{author}{Lev~V. \surnamestart Dvorkin\surnameend}
  (\bibinfo{year}{2026}): \emph{\bibinfo{title}{Monotonicity versus positivity
  in modal logics}}, \doi{10.48550/arXiv.2602.02837}.
\newblock \eprint{2602.02837}.

\bibitemdeclare{phdthesis}{French10}
\bibitem{French10}
\bibinfo{author}{Rohan \surnamestart French\surnameend} (\bibinfo{year}{2010}):
  \emph{\bibinfo{title}{Translational embeddings in modal logic}}.
\newblock Ph.D. thesis, \bibinfo{school}{Monash University},
  \doi{10.26180/4546354}.

\bibitemdeclare{book}{GabMaks05}
\bibitem{GabMaks05}
\bibinfo{author}{Dov~M. \surnamestart Gabbay\surnameend} \&
  \bibinfo{author}{Larisa~L. \surnamestart Maksimova\surnameend}
  (\bibinfo{year}{2005}): \emph{\bibinfo{title}{Interpolation and definability.
  {M}odal and intuitionistic logics}}.
\newblock {\slshape \bibinfo{series}{Oxford Logic
  Guides}}~\bibinfo{volume}{46}, \bibinfo{publisher}{Clarendon Press},
  \bibinfo{address}{Oxford}, \doi{10.1093/acprof:oso/9780198511748.001.0001}.

\bibitemdeclare{article}{Gliv29}
\bibitem{Gliv29}
\bibinfo{author}{Valery \surnamestart Glivenko\surnameend}
  (\bibinfo{year}{1929}): \emph{\bibinfo{title}{Sur quelques points de la
  logique de {M}. Brouwer}}.
\newblock {\slshape \bibinfo{journal}{Bulletins de la classe des sciences de
  l'Académie royale de Belgique}} \bibinfo{volume}{15}(\bibinfo{number}{5}),
  pp. \bibinfo{pages}{183--188}.

\bibitemdeclare{article}{God33}
\bibitem{God33}
\bibinfo{author}{Kurt \surnamestart G{\"o}del\surnameend}
  (\bibinfo{year}{1933}): \emph{\bibinfo{title}{Eine Interpretation des
  intuitionistischen Aussagenkalküls}}.
\newblock {\slshape \bibinfo{journal}{Ergebnisse eines Mathematischen
  Kolloquiums}} \bibinfo{volume}{4}, pp. \bibinfo{pages}{39--40}.
\newblock \bibinfo{note}{English translation in G{\"o}del, Kurt.
  \emph{Collected works. Volume I: Publications 1929--1936}. Ed. by Solomon
  Feferman et al. New York: Oxford University Press, 1986., pp. 301--303}.

\bibitemdeclare{article}{Gold78}
\bibitem{Gold78}
\bibinfo{author}{Rob \surnamestart Goldblatt\surnameend}
  (\bibinfo{year}{1978}): \emph{\bibinfo{title}{Arithmetical necessity,
  provability and intuitionistic logic}}.
\newblock {\slshape \bibinfo{journal}{Theoria}}
  \bibinfo{volume}{44}(\bibinfo{number}{1}), pp. \bibinfo{pages}{38--46},
  \doi{10.1111/j.1755-2567.1978.tb00831.x}.

\bibitemdeclare{article}{Humb95}
\bibitem{Humb95}
\bibinfo{author}{Lloyd \surnamestart Humberstone\surnameend}
  (\bibinfo{year}{1995}): \emph{\bibinfo{title}{The logic of non-contingency}}.
\newblock {\slshape \bibinfo{journal}{Notre Dame Journal of Formal Logic}}
  \bibinfo{volume}{36}(\bibinfo{number}{2}), pp. \bibinfo{pages}{214--229},
  \doi{10.1305/ndjfl/1040248455}.

\bibitemdeclare{inproceedings}{Hum06}
\bibitem{Hum06}
\bibinfo{author}{Lloyd \surnamestart Humberstone\surnameend}
  (\bibinfo{year}{2006}): \emph{\bibinfo{title}{Weaker-to-Stronger
  Translational Embeddings in Modal Logic}}.
\newblock In \bibinfo{editor}{Guido \surnamestart Governatori\surnameend},
  \bibinfo{editor}{Ian \surnamestart Hodkinson\surnameend} \&
  \bibinfo{editor}{Yde \surnamestart Venema\surnameend}, editors: {\slshape
  \bibinfo{booktitle}{Advances in Modal Logic}}, \bibinfo{volume}{6},
  \bibinfo{publisher}{College Publications}, \bibinfo{address}{London}, pp.
  \bibinfo{pages}{279--297}.
\newblock \urlprefix\url{http://www.aiml.net/volumes/volume6/}.

\bibitemdeclare{article}{JT51}
\bibitem{JT51}
\bibinfo{author}{Bjarni \surnamestart J{\'o}nsson\surnameend} \&
  \bibinfo{author}{Alfred \surnamestart Tarski\surnameend}
  (\bibinfo{year}{1951}): \emph{\bibinfo{title}{Boolean algebras with
  operators. {I}}}.
\newblock {\slshape \bibinfo{journal}{American Journal of Mathematics}}
  \bibinfo{volume}{73}, pp. \bibinfo{pages}{891--939}, \doi{10.2307/2372123}.

\bibitemdeclare{article}{KW99}
\bibitem{KW99}
\bibinfo{author}{Marcus \surnamestart Kracht\surnameend} \&
  \bibinfo{author}{Frank \surnamestart Wolter\surnameend}
  (\bibinfo{year}{1999}): \emph{\bibinfo{title}{Normal monomodal logics can
  simulate all others}}.
\newblock {\slshape \bibinfo{journal}{The Journal of Symbolic Logic}}
  \bibinfo{volume}{64}(\bibinfo{number}{1}), pp. \bibinfo{pages}{99--138},
  \doi{10.2307/2586754}.

\bibitemdeclare{article}{dR*}
\bibitem{dR*}
\bibinfo{author}{Natasha \surnamestart Kurtonina\surnameend} \&
  \bibinfo{author}{Maarten \surnamestart de~Rijke\surnameend}
  (\bibinfo{year}{1997}): \emph{\bibinfo{title}{Simulating without negation}}.
\newblock {\slshape \bibinfo{journal}{Journal of Logic and Computation}}
  \bibinfo{volume}{7}(\bibinfo{number}{4}), pp. \bibinfo{pages}{501--522},
  \doi{10.1093/logcom/7.4.501}.

\bibitemdeclare{incollection}{KuznMur80}
\bibitem{KuznMur80}
\bibinfo{author}{Aleksandr~V. \surnamestart Kuznetsov\surnameend} \&
  \bibinfo{author}{Alexei~Yu. \surnamestart Muravitsky\surnameend}
  (\bibinfo{year}{1980}): \emph{\bibinfo{title}{Provability as Modality}}.
\newblock In: {\slshape \bibinfo{booktitle}{Current Problems of Logic and
  Methodology of Science}}, \bibinfo{publisher}{Naukova Dumka},
  \bibinfo{address}{Kiev}, pp. \bibinfo{pages}{193--230}.
\newblock \bibinfo{note}{In Russian}.

\bibitemdeclare{article}{Lynd59Hom}
\bibitem{Lynd59Hom}
\bibinfo{author}{Roger~C. \surnamestart Lyndon\surnameend}
  (\bibinfo{year}{1959}): \emph{\bibinfo{title}{Properties preserved under
  homomorphism}}.
\newblock {\slshape \bibinfo{journal}{Pacific Journal of Mathematics}}
  \bibinfo{volume}{9}, pp. \bibinfo{pages}{143--154},
  \doi{10.2140/pjm.1959.9.143}.

\bibitemdeclare{article}{Maks80}
\bibitem{Maks80}
\bibinfo{author}{Larisa~L. \surnamestart Maksimova\surnameend}
  (\bibinfo{year}{1980}): \emph{\bibinfo{title}{Interpolation theorems in modal
  logics. Sufficient conditions}}.
\newblock {\slshape \bibinfo{journal}{Algebra and Logic}}
  \bibinfo{volume}{19}(\bibinfo{number}{2}), pp. \bibinfo{pages}{120--132},
  \doi{10.1007/BF01669837}.

\bibitemdeclare{article}{Maks82}
\bibitem{Maks82}
\bibinfo{author}{Larisa~L. \surnamestart Maksimova\surnameend}
  (\bibinfo{year}{1982}): \emph{\bibinfo{title}{The {Lyndon} interpolation
  theorem in modal logics}}.
\newblock {\slshape \bibinfo{journal}{Trudy Inst. Mat. Sib. Otd. AN SSSR}}
  \bibinfo{volume}{2}, pp. \bibinfo{pages}{45--55}.
\newblock \bibinfo{note}{In Russian}.

\bibitemdeclare{article}{Mats55}
\bibitem{Mats55}
\bibinfo{author}{Kazuo \surnamestart Matsumoto\surnameend}
  (\bibinfo{year}{1955}): \emph{\bibinfo{title}{Reduction theorem in {Lewis}'
  sentential calculi}}.
\newblock {\slshape \bibinfo{journal}{Mathematica Japonicae}}
  \bibinfo{volume}{3}, pp. \bibinfo{pages}{133--135}.

\bibitemdeclare{incollection}{Meschi78}
\bibitem{Meschi78}
\bibinfo{author}{Vyacheslav~Yu. \surnamestart Meskhi\surnameend}
  (\bibinfo{year}{1978}): \emph{\bibinfo{title}{Algebraic analysis of modal
  fragments of temporal logics}}.
\newblock In: {\slshape \bibinfo{booktitle}{Logic, Semantics, Methodology}},
  \bibinfo{publisher}{Metsniereba}, \bibinfo{address}{Tbilisi}, pp.
  \bibinfo{pages}{113--124}.
\newblock \bibinfo{note}{In Russian}.

\bibitemdeclare{article}{MR66}
\bibitem{MR66}
\bibinfo{author}{Hugh \surnamestart Montgomery\surnameend} \&
  \bibinfo{author}{Richard \surnamestart Routley\surnameend}
  (\bibinfo{year}{1966}): \emph{\bibinfo{title}{Contingency and non-contingency
  bases for normal modal logics}}.
\newblock {\slshape \bibinfo{journal}{Logique et Analyse. Nouvelle S{\'e}rie}}
  \bibinfo{volume}{9}(\bibinfo{number}{35/36}), pp. \bibinfo{pages}{318--328}.
\newblock \urlprefix\url{http://www.jstor.org/stable/44083397}.

\bibitemdeclare{article}{Mor19}
\bibitem{Mor19}
\bibinfo{author}{Tommaso \surnamestart Moraschini\surnameend}
  (\bibinfo{year}{2019}): \emph{\bibinfo{title}{Varieties of positive modal
  algebras and structural completeness}}.
\newblock {\slshape \bibinfo{journal}{The Review of Symbolic Logic}}
  \bibinfo{volume}{12}(\bibinfo{number}{3}), pp. \bibinfo{pages}{557--588},
  \doi{10.1017/S1755020319000236}.

\bibitemdeclare{article}{Pell84}
\bibitem{Pell84}
\bibinfo{author}{Francis~J. \surnamestart Pelletier\surnameend}
  (\bibinfo{year}{1984}): \emph{\bibinfo{title}{Six Problems in ``Translational
  Equivalence''}}.
\newblock {\slshape \bibinfo{journal}{Logique et Analyse}}
  \bibinfo{volume}{27}(\bibinfo{number}{108}), pp. \bibinfo{pages}{423--434}.
\newblock \urlprefix\url{http://www.jstor.org/stable/44084104}.

\bibitemdeclare{incollection}{PraMal68}
\bibitem{PraMal68}
\bibinfo{author}{Dag \surnamestart Prawitz\surnameend} \&
  \bibinfo{author}{Per-Erik \surnamestart Malmn{\"a}s\surnameend}
  (\bibinfo{year}{1968}): \emph{\bibinfo{title}{A survey of some connections
  between classical, intuitionistic and minimal logic}}.
\newblock In: {\slshape \bibinfo{booktitle}{Studies in Logic and the
  Foundations of Mathematics}}, \bibinfo{volume}{50},
  \bibinfo{publisher}{Elsevier}, pp. \bibinfo{pages}{215--229},
  \doi{10.1016/S0049-237X(08)70527-5}.

\bibitemdeclare{phdthesis}{dR93}
\bibitem{dR93}
\bibinfo{author}{Maarten \surnamestart de~Rijke\surnameend}
  (\bibinfo{year}{1993}): \emph{\bibinfo{title}{Extending Modal Logic}}.
\newblock Ph.D. thesis, \bibinfo{school}{University of Amsterdam}.
\newblock \urlprefix\url{https://eprints.illc.uva.nl/id/eprint/1961/}.

\bibitemdeclare{article}{Sol76}
\bibitem{Sol76}
\bibinfo{author}{Robert~M. \surnamestart Solovay\surnameend}
  (\bibinfo{year}{1976}): \emph{\bibinfo{title}{Provability interpretations of
  modal logic}}.
\newblock {\slshape \bibinfo{journal}{Israel Journal of Mathematics}}
  \bibinfo{volume}{25}, pp. \bibinfo{pages}{287--304},
  \doi{10.1007/BF02757006}.

\bibitemdeclare{article}{Thom75}
\bibitem{Thom75}
\bibinfo{author}{Steven~K. \surnamestart Thomason\surnameend}
  (\bibinfo{year}{1975}): \emph{\bibinfo{title}{Reduction of Tense Logic to
  Modal Logic {II}}}.
\newblock {\slshape \bibinfo{journal}{Theoria}}
  \bibinfo{volume}{41}(\bibinfo{number}{3}), pp. \bibinfo{pages}{154--169},
  \doi{10.1111/j.1755-2567.1975.tb00555.x}.

\bibitemdeclare{article}{Zol00}
\bibitem{Zol00}
\bibinfo{author}{Evgeni~E. \surnamestart Zolin\surnameend}
  (\bibinfo{year}{2000}): \emph{\bibinfo{title}{Embeddings of propositional
  monomodal logics}}.
\newblock {\slshape \bibinfo{journal}{Logic Journal of the {IGPL}}}
  \bibinfo{volume}{8}(\bibinfo{number}{6}), pp. \bibinfo{pages}{861--882},
  \doi{10.1093/jigpal/8.6.861}.

\bibitemdeclare{article}{Zol02}
\bibitem{Zol02}
\bibinfo{author}{Evgeni~E. \surnamestart Zolin\surnameend}
  (\bibinfo{year}{2002}): \emph{\bibinfo{title}{Sequential Reflexive Logics
  with Noncontingency Operator}}.
\newblock {\slshape \bibinfo{journal}{Mathematical Notes}}
  \bibinfo{volume}{72}(\bibinfo{number}{6}), pp. \bibinfo{pages}{784--798},
  \doi{10.1023/A:1021485712270}.

\end{thebibliography}

\end{document}